\documentclass[a4paper,10pt]{amsart}
\usepackage{amssymb}
\usepackage{cmap}
\usepackage{tikz-cd}
\usepackage[pdfdisplaydoctitle=true,
            colorlinks=true,
            urlcolor=blue,
            citecolor=blue,
            linkcolor=blue,
            pdfstartview=FitH,
            pdfpagemode= UseNone,
            bookmarksnumbered=true,
            backref=page]{hyperref}
\usepackage{hyperxmp}
\usepackage{todonotes}
\usepackage[all]{xy}
\usepackage{ulem}
\usepackage{cancel}

\theoremstyle{plain}
\newtheorem{thm}{Theorem}[section]
\newtheorem*{thm*}{Theorem}

\newtheorem{cor}[thm]{Corollary}
\newtheorem{lem}[thm]{Lemma}
\newtheorem{prop}[thm]{Proposition}

\theoremstyle{definition}

\theoremstyle{definition}
\newtheorem{rem}[thm]{Remark}

\newtheorem{defi}[thm]{Definition}

\newtheorem{expl}[thm]{Example}
\newtheorem*{ass*}{Assumption}

\numberwithin{equation}{section}

\newcommand{\NN}{\mathbb{N}} %
\newcommand{\ZZ}{\mathbb{Z}} %
\newcommand{\RR}{\mathbb{R}} %
\newcommand{\CC}{\mathbb{C}} %

\newcommand{\id}{\mathrm{id}}           %
\let\on=\operatorname

\newcommand{\Diff}{\mathrm{Diff}}       %
\newcommand{\biLip}{\mathrm{biLip}}

\let\on=\operatorname

\def\Diff{\on{Diff}}

\allowdisplaybreaks

\newcommand{\e}{\varepsilon}

\usepackage{todonotes}
\hypersetup{
    pdftitle={Sobolev metrics on spaces of manifold valued curves}, %
    pdfsubject={MSC 2010: 58B20, 58D10, 35G55, 35G60}, %
    pdfkeywords={Sobolev metrics, manifold valued curves}, %
    pdflang=EN %
    }

\begin{document}

\title [\ ] {An analytic approach to $P$-adic diffeomorphism group and  Teichm\"{u}ller theory}

\author{Yuxiu Lu}
\address{Department of Mathematics and statistics, Nanjing University of Science and Technology, 210094, P.R.China
 }
 \email{luyxnj@gmail.com}
\date{March 20, 2026}

\keywords{}

\begin{abstract}
We consider a specific class of infinite dimensional $p$-adic Lie groups, i.e., a sort of diffeomorphism groups on $p$-adic ball $\operatorname{Diff}^{\operatorname{an}}(B_\epsilon)$. It turns out that this group has a natural logarithmic structure that leads to a $p$-adic version of
Teichm\"{u}ller theory on diffeomorphism groups, which also presents some remarkable hydrodynamic
facets. We further apply this framework to Mochizuki's $p$-adic Teichm\"{u}ller theory and Inter-universal Teichm\"{u}ller theory (IUT), and give a new reformulation of IUT as a Teichm\"{u}ller theory on automorphisms of two-dimensional group schemes.     
 \end{abstract}

\maketitle

\setcounter{tocdepth}{2}
\tableofcontents

 \section{introduction}
In his series of papers~\cite{mochizuki12},
Mochizuki constructed a theory called inter-universal Teichm\"{u}ller theory (IUT) to provide a proof for various outstanding conjectures in Diophantine geometry. It can be seen as an arithmetic version of Teichm\"{u}ller theory for number fields equipped with an elliptic curve, which is a continuation of his previous work in anabelian geometry and 
$p$-adic Teichm\"{u}ller theory. The previous approaches are purely algebraic and focuse on the \'etale fundamental group of  arithmetic variety. However, this \'etale-like picture alone is totally discrete and lacks the necessary analytic structure. The groundbreaking idea of Mochizuki is to introduce 
an enhancement of the \'etale-like picture, i.e., the Frobenius-like picture, that further gives rise to a log-structure.

Although Mochizuki's treatment for this topic is insightful and enlightening in many aspects, it involves many newly defined objects that are not easily approachable. Hence a natural question arises that whether one can find a suitable category where many of these new terminology  and definitions will be understood as some down-to-earth notions. Foremost we need to understand what is a Frobenius-like picture and how it comes.  

In recent paper~\cite{Lu2025}, the present author found that
there exists a $\RR$-version Teichm\"{u}ller theory on infinite dimensional Fr\'echet Lie group $\operatorname{Diff}_{-\infty}(\RR)$, which is a diffeomorphism group of the identity component. In this Fr\'echet Lie group endowed with right invariant $\dot W^{1,r}$  Sobolev metric, the $r$-th root map becomes a logarithmic function when $r$ goes to infinity, which procedure is denoted by an $L^r$-approximation. When we replace the real line with a $p$-adic field, it is quite remarkable that it is the power of Frobenius morphisms that turns into a logarithmic function at infinity; see Corollary~\ref{cor:loglim}. Therefore, the next task is to find a convenient diffeomorphism group on a $p$-adic field and a corresponding Teichm\"{u}ller theory parallel to that on $\operatorname{Diff}_{-\infty}(\RR)$. For this purpose, we introduce a $p$-adic diffeomorphism group $\operatorname{Diff}^{\operatorname{an}}(B_\epsilon)$ on a $p$-adic ball $B_\epsilon$, and consider its various differential geometric properties, such as its Lie algebra, Bers embedding, etc.

This gives us insight into the questions of what a Frobenius-like picture is in its essence. 
In fact, it can be seen as an infinitesimal approximation of a log-structure, and the Schwarz derivative as something derived from a 
log-structure can be approximated by  Frobenius morphisms; see subsection~\ref{subsec:schwarzder}. Moreover, the Frobenius-like picture is strictly related to hydrodynamics of the diffeomorphism group $\operatorname{Diff}^{\operatorname{an}}(B_\epsilon)$, which is more explicitly displayed in real line case of $\operatorname{Diff}_{-\infty}(\RR)$; for more information about differential geometric approach to hydrodynamics, see e.g.,~\cite{arnold1966geometrie, Arnold98}. Therefore, the Frobenius-like picture introduces a necessary local analytic structure on the \'etale-like picture a priori.  

All of these above discussions consist of the first part of the present article. We aim to provide a new approach to understanding the Frobenius-like picture in a purely analytic way. In the second part, we will mainly apply this method to Mochizuki's $p$-adic Teichm\"{u}ller theory and inter-universe Teichm\"{u}ller theory by considering its interplay with the \'etale-like picture.

The $p$-adic Teichm\"{u}ller theory is a generalization of classical Teichm\"{u}ller theory on a hyperbolic curve $C$ over an arbitrary positive characteristic perfect field. Its main object is an integral Frobenius indigenous bundle, which was originally introduced by Gunning~\cite{gunning1967} as an affine (resp. projective)  flat coordinate bundle on a Riemann surface. It was shown that the existence of a Fuchsian uniformization is equivalent to that of a canonical indigenous bundle on Riemann surface; this idea was further developed by Mochizuki to study the integrality condition on the $p$-adic indigenous bundle. Unlike Mochizuki's algebraic prospective, our approach will focus on the diff-group theoretic aspects of this object, that is, the automorphism group of 
the indigenous bundle.

In their paper~\cite{abbati1989}, Abbati et. al. showed that for any principal bundle $P$, the  automorphism group  $\operatorname{Aut}(P)$ can be characterized as an exact sequence
\begin{equation*}
1\to \operatorname{Gau}(P)\to \operatorname{Aut}(P)\to \operatorname{Diff}_{\operatorname{id}}(M)\to1,     
\end{equation*}
where $\operatorname{Diff}_{\operatorname{id}}(M)$ is an open subgroup of $M$ containing the connected component of the identity, and $\operatorname{Gau}(P)$ denotes gauge group of $P$. Henceforth, if the fibre of indigenous bundle is a group, then we can give a full characterization of $\operatorname{Aut}(P)$ in two directions: one  is the local analytic structure pertaining to $\operatorname{Diff}_{\operatorname{id}}(M)$, the other is the \'etale-like picture pertaining to the group transformations of fibres. This phenomenon is common in hydrodynamics, one case of interest is the space of symplectic structures realized as a principal bundle on 
the space of probability densities; see~\cite{Lu2025}. But this is not quite applicable to an indigenous bundle on a hyperbolic curve, since it corresponds to a projective structure and there is no group structure on the fibre $\mathbb P_K^1$. Thus we are only capable of considering in this case its local analytic properties, i.e., its Frobenius-like picture.

However, it is quite surprising that this interpretation appears to fit well into the pattern of inter-universal Teichm\"{u}ller theory (IUT). The main object of IUT is an elliptic curve $E$ over a number field $K$. Here  $E$ can be seen as an arithmetic surface, or a group scheme $\mathcal{E}$, which is a two-dimensional geometric object: the base space $R$, as the ring of integers of $K$, can be locally completed to a  complete discrete valuation ring; the special (generic) fiber, as an elliptic curve, has a canonical group structure. Therefore, it is an analogy of a principal bundle in algebraic geometry, and furthermore, the automorphism group of $\mathcal{E}$ can be split into two parts in an exact sequence, as in the case of a principal bundle. The first part is the local analytical structure pertaining to the diffeomorphism group of a complete ring $\operatorname{Diff}^{\operatorname{an}}(R_{\mathfrak p})$ which we already know well from the theory of $p$-adic diffeomorphism groups. The second part is the \'etale-like picture related to the group transformations acting on the elliptic curve $E$. Since the group structure on elliptic curve is too large and the automorphism group is too small (the order dividing $24$), we choose the group of $l$-torsion points on $E$ as is done in IUT.  This group can be evaluated in terms of a theory of $p$-adic theta functions over a Tate curve $E_q$, we will follow steps in~\cite{brown2003} to introduce this topic.  
Putting all of these above together, we obtain the so-called Hodge theatre, which consists of the log-links given by the local analytic structure, and the $\Theta$-links given by $l$-th roots of $p$-adic theta functions, with respect to the \'etale picture. Finally, we present a brief remark regarding the Diophantine 
results that were derived by IUT. Although we could not fully jusitify these results, we explain some of its fundamental ideas and how it gets stuck at some points.

\subsection*{The structure of the paper}
Section~\ref{sec:padicman}
gives some background information of $p$-adic manifolds, differential forms and Serre's classification theorem on 
$p$-adic compact manifolds.
Section~\ref{sec:padicdiffgroup} introduces a specific $p$-adic diffeomorphism group $\operatorname{Diff}^{\operatorname{an}}(B_\epsilon)$, defines a 
$p$-adic version of
Teichm\"{u}ller theory on $\operatorname{Diff}^{\operatorname{an}}(B_\epsilon)$ by an approximation of logarithmic function from Frobenius morphisms. Section~\ref{sec:padicteichmuller}, as a transitional section,  concerns an exposition of the basic notions used in $p$-adic Teichm\"{u}ller theory, further links the study on automorphism groups of indigenous bundle to that on a principal bundle. We also construct a canonical Frobenius lifting, the key element in $p$-adic Teichm\"{u}ller theory, from the logarithmic structure given in the previous sections.
The final Section~\ref{sec:interuniverseteich}
is a reformulation of IUT by using a point of view from $p$-adic diffeomorphism groups. We will discuss fundamental notions from scheme theory, Tate curves
and theta functions, and consider the log-links pertaining to  log-structure on a $p$-adic completion of a number field, as well as the $\Theta$-links pertaining to the theta values on $l$-th torsion points on an elliptic curve.

 \section{$p$-adic analytic manifolds and differential forms}\label{sec:padicman}
Let $(K,|\cdot|)$ be a non-archimedean local field with an ultrametric. The notion of a $p$-adic analytic manifold $M$ over $K$ can be found in the books~\cite{schneider2011, serre1992}. We present some related notions in the following.
\subsection{Charts and differential forms}
Let $M$ be a Hausdorff topological space. An $n$-chart on $M$ is a pair $(U_\alpha, \phi_\alpha)$, where $U_\alpha$ is an open subset and $\phi_\alpha:U_\alpha\to K^n$ is a homeomorphism onto an open subset in $K^n$. In addition, two charts $(U_\alpha, \phi_\alpha)$ and $(U_\beta, \phi_\beta)$ are compatible if the transition map 
\begin{align*}
 g_{\beta\alpha}:= \phi_\beta\circ \phi_\alpha^{-1}: \phi_\alpha(U_\alpha)\to\phi_\beta(U_\beta)
\end{align*}
is locally analytic. An atlas of $M$ is a family of compatible charts $\mathcal A=\{(U_\alpha,\phi_\alpha):\alpha\in I\}$ such that the set $\{U_\alpha:\alpha\in I\}$ forms an open cover of $M$. Two atlases $\mathcal A$ and  $\mathcal B$ are called equivalent if $\mathcal A\cup\mathcal B $ is also an atlas of $M$. An equivalent class of atlases on $M$ is called a 
$p$-adic analytic structure on $M$. There is a unique maximal atlas in each analytic structure of $M$ with respect to inclusion maps; see e.g., Remark 7.2 in~\cite{schneider2011}. In conclusion, a $p$-adic analytic manifold is a pair $(M, \mathcal A)$ consisting of a Hausdorff space $M$ and an atlas $\mathcal A$ of $M$ given above.

Similar to the real case, we are able to define the differential forms on $M$; note that the name "differential" here no longer refers to smoothness, but rather local analyticity. 
Therefore, an analytic differential $n$-form $\omega$ on $p$-adic analytic manifold $M$ can be expressed locally in each chart $(U_\alpha,\phi_\alpha)$ as 
\begin{align*}
\omega_\alpha&=f_\alpha(\phi_\alpha(x)) dx\\&=f_\alpha(x_1,\cdots,x_n) dx_1\wedge\cdots\wedge dx_n    
\end{align*}
 with $f_\alpha:\phi_\alpha(U_\alpha)\to K$ a locally analytic map. 
 
 On the other hand, since $K$ (e.g., the field of $p$-adic numbers
 $\mathbb Q_p$) is a locally compact topological group with respect to addition, there exists a unique additive Haar measure $|dx|$ by Weil theorem. More precisely, it is a positive measure invariant under shifts, i.e., $|d(x+a)|=|dx|$, for any $a\in \mathbb Q_p$. If the measure $|dx|$ is normalized by the equality 
 \begin{equation}\label{normalizedhaar}
 \int_{\mathbb Z_p} |dx|=1,   
 \end{equation}
then $|dx|$ is uniquely determined. Furthermore, the right multiplication of $a\in\mathbb Q_p^*$ acts on $|dx|$ by 
\begin{equation}
|d(xa)|=|a|_p |dx|,    
\end{equation}
where $|\cdot|_p$ is the canonical ultrametric on $\mathbb Q_p$; for a proof, see e.g., Proposition 3.2.1 in~\cite{Albeverio2010}. The Haar measure on $\mathbb Q_p$ can be easily extended to the one on $\mathbb Q_p^n$. For any linear isomorphism $A:\mathbb Q_p^n\to\mathbb Q_p^n$, we have similar transition laws for both addition 
\begin{equation}
|d(x+a)|=|dx|, \; a\in \mathbb Q_p^n,   
\end{equation}
and multiplication
\begin{equation}
|d(Ax)|=|\operatorname{det}A|_p|dx|, \; a\in \mathbb Q_p^n.   
\end{equation}
The formula of change of variables still holds in the $p$-adic setting.
\begin{prop}\label{changevariable}
If $y=\phi(x)$ is an analytic diffeomorphism of a closed and open set $A\subset \mathbb Q_p^n$ onto a closed and open set $A'\subset \mathbb Q_p^n$, and the Jacobian $\det(D\phi)(x)\neq0$ for any $x\in A$. Then for any integrable function $f:A\to\mathbb C$, we have 
\begin{align*}
\int_A f(y)|dy|=\int_{A'}f(\phi(x))|\operatorname{Det}(D\phi)(x)|_p|dx|.    
\end{align*}
\end{prop}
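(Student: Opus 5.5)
The plan is to reduce, by the local analyticity of $\phi$ and the compactness of small balls, to the case where $\phi$ is very close to an affine map on a tiny ball. On such a ball I will use the linear transformation law $|d(Ax)|=|\det A|_p|dx|$ together with translation invariance. (The statement as written has $A$ and $A'$ interchanged. The intended formula is $\int_{A'} f(y)|dy|=\int_A f(\phi(x))|\det(D\phi)(x)|_p|dx|$, and I will prove that version.)

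\textbf{Step 1 (local isometry lemma).} Fix $x_0\in A$ and put $L=D\phi(x_0)$, which is invertible. Since $\phi$ is analytic near $x_0$, write $\phi(x)=\phi(x_0)+L(x-x_0)+R(x)$, where $R$ is a convergent power series with no terms of degree $\le 1$. The ultrametric inequality then gives $\|R(x)-R(x')\|\le C r\,\|x-x'\|$ for $x,x'$ in the ball $B(x_0,r)$. Choose $r$ so small that $Cr\|L^{-1}\|<1$. Then $\psi:=L^{-1}\circ(\phi-\phi(x_0))$ satisfies $\|\psi(x)-\psi(x')-(x-x')\|<\|x-x'\|$, so $\|\psi(x)-\psi(x')\|=\|x-x'\|$ by the strict triangle inequality. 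Thus $\psi$ is an isometry of $B(x_0-x_0,r)=B(0,r)$ into itself. A standard successive-approximation argument (Hensel/Newton iteration, which converges because the ball is complete) shows that $\psi$ is onto $B(0,r)$. Shrinking $r$ further, I can also assume $|\det D\phi(x)|_p=|\det L|_p$ on the ball, because $\det D\phi$ is continuous and nonvanishing and $|\cdot|_p$ is locally constant away from $0$.

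\textbf{Step 2 (measures of small balls).} An isometry of $B(0,r)$ onto itself maps every sub-ball $B(a,s)$ onto the ball $B(\psi(a),s)$. By translation invariance, both balls have the same Haar measure. Hence $\psi$ preserves the measure of all sub-balls. These sub-balls are compact open and generate the Borel $\sigma$-algebra, so $\psi$ preserves Haar measure. Combining this with the linear law and translation invariance, I get $\mu(\phi(E))=|\det L|_p\,\mu(E)=\int_E|\det D\phi|_p|dx|$ for every Borel $E\subset B(x_0,r)$.

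\textbf{Step 3 (globalize).} The balls from Step 1 cover $A$. Since any two $p$-adic balls are either nested or disjoint, and $A$ is a countable union of compact sets, I can choose a countable disjoint subcover $\{B_i\}$. Because $\phi$ is a bijection, the images $\phi(B_i)$ partition $A'$. Step 2 then gives the formula for indicator functions of Borel sets. Linearity extends it to simple functions, and monotone convergence extends it to nonnegative measurable $f$. Finally, splitting a complex integrable $f$ into real and imaginary positive and negative parts gives the result for all integrable $f$, with integrability on both sides equivalent.

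\textbf{Main obstacle.} I expect the hard part to be Step 1: the precise estimate on the nonlinear remainder, and especially the surjectivity of $\psi$ onto the ball. For surjectivity I would first try solving $\psi(x)=y$ by the iteration $x_{k+1}=x_k+(y-\psi(x_k))$. The isometry-type estimate makes this iteration contract, so it converges in the complete ball.
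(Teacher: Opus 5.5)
Your proof is correct, and so is your correction of the statement. Since $y=\phi(x)$ maps $A$ onto $A'$, the function $f$ must live on $A'$, and the formula should read $\int_{A'}f(y)|dy|=\int_A f(\phi(x))|\det(D\phi)(x)|_p|dx|$. As printed, $x$ is integrated over $A'$, where $\phi$ need not be defined.

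The paper gives no argument of its own: its proof is only a citation of Theorems~3.4.1 and~3.4.2 of \cite{Albeverio2010}. So the comparison is between relying on that reference and your self-contained proof. Your proof follows the usual route to the $p$-adic change-of-variables formula: local linearization, small balls mapped onto balls, the linear law $|d(Ax)|=|\det A|_p|dx|$, and a disjoint ball cover.

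Your route also runs on the paper's own mechanism. After the affine normalization, $\psi-\mathrm{id}=L^{-1}R(x_0+\cdot)$ is a perturbation of the identity with Lipschitz constant $\kappa:=Cr\|L^{-1}\|<1$. So $\psi$ is essentially an element of $\operatorname{Diff}^{\operatorname{an}}(B_r)$, and your surjectivity iteration is exactly the one in the proof of Theorem~\ref{thmdifflie}.

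A few things to tidy in a write-up; none is a gap:
\begin{enumerate}
\item $\psi$ is an isometry of $B(x_0,r)$ onto $B(0,r)$, not of ``$B(x_0-x_0,r)$ into itself''.
\item The map $u\mapsto y-L^{-1}R(x_0+u)$ preserves $B(0,r)$ and is a contraction because of the uniform constant $\kappa$ together with $R(x_0)=0$. The strict inequality $\|\psi(x)-\psi(x')-(x-x')\|<\|x-x'\|$ alone would not give this.
\item The passage from balls to all Borel sets in Step~2 is Dynkin's $\pi$--$\lambda$ theorem. It applies because, in an ultrametric space, balls together with $\emptyset$ form a $\pi$-system.
\item In Step~3 the disjoint subcover is cleanest if you take radii in $p^{\mathbb Z}\cap(0,1]$ and keep the maximal balls. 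These exist because only finitely many such balls contain a given one. Every sub-ball of a Step~1 ball inherits the conclusion of Step~2, so the choice is harmless.
\end{enumerate}
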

\begin{proof}
See Theorem~3.4.1 and Theorem~3.4.2 of the book~\cite{Albeverio2010}.    
\end{proof}
It turns out that every $n$-form $\omega$ on a $p$-adic manifold $M$ generates a Haar measure $\mu_\omega=|\omega|$ on $M\setminus V(\omega)$, where $V(\omega)$ is the vanishing set of $\omega$. The measure $\mu_\omega$ on a subset $A\subset U_\alpha$ can then be written as 
\begin{align*}
\mu_\omega(A)=\int_A|\omega|=\int_{\phi_\alpha(A)}  |f_\alpha(x)||dx|.  
\end{align*}

\subsection{Compact $p$-adic manifolds and Serre's theorems}
Compared to manifolds modeled on Euclidean spaces, the structure of compact manifolds on $p$-adic fields becomes much simpler, thanks to the discrete nature of $p$-adic numbers. Indeed, according to a theorem of Bourbaki, all the $p$-adic analytic manifolds are the disjoint unions of some $p$-adic balls. 

In his paper~\cite{serre1965}, Serre characterizes an arbitrary compact $p$-adic manifolds by its invariant. In the following, we state these short but remarkable results. The proofs can be found in~\cite{igusa2002, serre1965}. 
\begin{thm}\label{serrethm1}
Every $n$-dimensional compact manifold $M$ is isomorphic to the $r$-copies ($r\geq1$) of $p$-adic balls $ ^r{(B_\epsilon(0))}$, where the ball $B_\epsilon(a):=\{x:|x-a|_p<\epsilon=p^\gamma\}$. Furthermore, $^r(B_\epsilon(0))$ and $^{r'}(B_\epsilon(0))$ are isomorphic if and only if $r\equiv r'\operatorname{mod}(q-1)$.    
\end{thm}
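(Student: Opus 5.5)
We prove Serre's classification in three stages: reduce $M$ to a disjoint union of balls, show that the number of balls only matters modulo $q-1$, and build a measure-theoretic invariant that tells the classes apart. Throughout, $K$ has residue field $\mathbb F_q$, uniformizer $\pi$ and ring of integers $\mathcal O$, and all balls are taken in $K^n$.

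\emph{Existence of a decomposition.} Each point of $M$ has a chart neighbourhood, and we shrink it to a small ball $B_\epsilon(a)$, i.e.\ a translate and rescaling of $\mathcal O^n$. By compactness, finitely many such balls cover $M$. Two balls in an ultrametric space are either disjoint or nested, so a finite cover by balls can be refined to a finite disjoint cover. The refinement goes as follows: split each ball into $q^n$ subballs of radius $\epsilon/q$ until all pieces have the same small radius, then discard duplicates. Every ball is analytically isomorphic to $B_\epsilon(0)$ by an affine map. Hence $M\cong {}^r(B_\epsilon(0))$ for some $r\ge1$.

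\emph{Sufficiency of the congruence.} The key observation is that $B_\epsilon(0)$ is the disjoint union of $q^n$ balls of radius $\epsilon/q$, each isomorphic to $B_\epsilon(0)$. Splitting one ball therefore gives ${}^r(B)\cong {}^{r+q^n-1}(B)$. Since $q-1$ divides $q^n-1$, iterating already identifies $r$ with $r+k(q^n-1)$. To reach all $r'\equiv r \pmod{q-1}$, and not merely modulo $q^n-1$, we need a finer decomposition of $B$ into exactly $q$ pieces. We use $B\cong \mathcal O\times \mathcal O^{n-1}$ and split only the first coordinate into $q$ residue classes. Each piece $(a+\pi\mathcal O)\times\mathcal O^{n-1}$ is isomorphic to $\mathcal O^n$ via $x_1\mapsto (x_1-a)/\pi$, a polydisc rescaled anisotropically. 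This gives ${}^r(B)\cong {}^{r+q-1}(B)$. Applying this step or its inverse, we obtain ${}^r(B)\cong{}^{r'}(B)$ whenever $r\equiv r'\pmod{q-1}$ and $r,r'\ge1$.

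\emph{Necessity: the invariant.} This is the main obstacle. We attach to $M$ the residue
\[
\mathrm{vol}(M):=\int_M|\omega| \in \mathbb Z[q^{-1}] \bmod (q-1),
\]
where $\omega$ is any nowhere-vanishing analytic $n$-form on $M$; such a form exists on a disjoint union of balls, and $|\omega|$ is the measure $\mu_\omega$ defined above. We must show the residue is independent of the choice of $\omega$.

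\emph{Values of the integral.} Locally $|f_\alpha|_p$ is locally constant with values in $q^{\mathbb Z}$. After a fine enough partition into small balls of volume $q^{-m}$, the integral is a finite sum of powers of $q$, so it lies in $\mathbb Z[q^{-1}]$. Since $q\equiv1$ in $\mathbb Z[q^{-1}]/(q-1)\cong\mathbb Z/(q-1)$, every power of $q$ maps to $1$, and the residue equals the number of terms modulo $q-1$.

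\emph{Independence of $\omega$.} If $\omega'=g\omega$ with $g$ invertible analytic, then $|g|$ is locally constant with values in $q^{\mathbb Z}$. Refining the partition so that $|g|$ is constant on each piece changes each summand by a power of $q$, which does not change the residue. Isomorphism invariance then follows from Proposition~\ref{changevariable}, because the Jacobian of an analytic isomorphism is an invertible function.

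\emph{Conclusion.} With $\omega=dx_1\wedge\cdots\wedge dx_n$ normalized by (\ref{normalizedhaar}), we get $\mathrm{vol}({}^r(B_\epsilon(0)))\equiv r\cdot q^{k}\equiv r\pmod{q-1}$. Hence isomorphic unions have $r\equiv r'\pmod{q-1}$, which completes the proof.
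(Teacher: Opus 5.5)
Your route is Serre's original argument, which is what the paper relies on: the paper gives no proof of Theorem~\ref{serrethm1} and only cites Serre and Igusa, and the invariant you construct is exactly the one recorded in Theorem~\ref{serrethm2}. Two parts of your proposal are correct as written. The first is the sufficiency step, where splitting only the first coordinate into $q$ residue classes gives ${}^r(B)\cong{}^{r+q-1}(B)$ rather than merely ${}^{r+q^n-1}(B)$. The second is the counting argument: for a partition fine enough that $|f_\alpha|$ is constant on each ball, $\int_M|\omega|$ is congruent modulo $q-1$ to the number of pieces.

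\textbf{Gap: the disjoint decomposition.} This step fails as written. ``Two balls are nested or disjoint'' holds for balls of one ultrametric on one copy of $K^n$. Your balls live in different charts, $M$ carries no ultrametric in which they are all metric balls, and ``the same small radius'' has no meaning across charts. Already for $n=2$, suppose the transition map between two charts is $(x_1,x_2)\mapsto(\pi x_1,\pi^{-1}x_2)$. In chart-$1$ coordinates the chart-$2$ ball $\mathcal O^2$ is $\pi^{-1}\mathcal O\times\pi\mathcal O$. It meets the chart-$1$ ball $\mathcal O^2$ in $\mathcal O\times\pi\mathcal O$, so neither ball contains the other. Refining both families to the same radius reproduces this configuration at every scale, so discarding duplicates never yields a disjoint cover.

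\textbf{Repair.} Make the chart domains disjoint before splitting. Choose finitely many compact open chart domains $U_1,\dots,U_k$ covering $M$ (preimages of balls under charts), and set $V_i=U_i\setminus(U_1\cup\dots\cup U_{i-1})$.
\begin{itemize}
\item Each $V_i$ is compact open, because finite unions of compact sets are closed in the Hausdorff space $M$.
\item Each $V_i$ lies in the single chart $U_i$. There its image is a compact open subset of $K^n$, hence a finite union of balls.
\item These balls can be made disjoint by keeping the maximal ones, using the nested-or-disjoint property in $K^n$ itself.
\end{itemize}
Once $M\cong{}^r(B_\epsilon(0))$, you have a single global coordinate, so the partitions in your integral computations cause no further trouble.

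\textbf{Smaller point: normalization.} The absolute value in $|\omega|$ and in the change-of-variables formula must be the normalized one, $|\cdot|_K$ with $|\pi|_K=q^{-1}$. Proposition~\ref{changevariable} is stated only over $\mathbb Q_p$; over a general $K$ the Haar measure scales by $|\det A|_K$. Only with this normalization is $|\omega|$ independent of the chart, and only then do the values of $|f_\alpha|$ lie in $q^{\ZZ}$. The $p$-adic absolute value extended to $K$ instead takes values in $p^{\frac1e\ZZ}$.
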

It is quite unexpected that the integration of any non-vanishing $n$-differential form $\omega$ on $M$ gives rise to an invariant modulo $q-1$. 
\begin{thm}\label{serrethm2}
Let $q=p^f$ such that $|a|_v=q^{v(a)}$. There exists a non-vanishing analytic differential form $\omega$
on $M$, all such $\omega$ have the integration $\int_M |\omega|=N/q^m$, for $m\in\mathbb N$ and $N\in\mathbb N^+$. This integration modulo $q-1$ 
\begin{equation}
i(M):=\mu_\omega(M)=\int_M|\omega|\equiv N \; \; \operatorname{mod}(q-1),    
\end{equation}
depends only on $M$, and $M$ is $K$-bianalytic to $(B_\gamma(0))^{i(M)}$.
\end{thm}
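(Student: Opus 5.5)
We prove Theorem~\ref{serrethm2} by reducing $M$ to a finite disjoint union of balls and computing the volume of the model ball exactly, following Serre~\cite{serre1965}.

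\textbf{Existence of $\omega$.} Since $M$ is compact and totally disconnected, we refine a finite atlas into finitely many pairwise disjoint compact open chart domains $U_1,\dots,U_k$. On each $U_j$ we pull back $dx_1\wedge\cdots\wedge dx_n$ via $\phi_j$. Because the $U_j$ are disjoint and open, these local forms glue to a global nowhere-vanishing analytic $n$-form $\omega$.

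\textbf{Computing the volume.} Each compact open piece can be subdivided further. Using the ultrametric, every ball in $K^n$ splits into $q^n$ smaller balls of radius divided by $q$. So after rescaling and translating, $M$ is a disjoint union of $r$ polydiscs, each isomorphic to $\mathcal O^n$ (the unit ball), with $\omega$ restricting on each piece to $f(x)\,dx$ where $f$ is analytic and nonvanishing. Refining further, we can assume $|f|$ is constant on each piece, equal to $q^{-e}$ for some integer $e$. This is possible because a nonvanishing analytic function on a small enough ball has constant absolute value: its leading term dominates. Then Proposition~\ref{changevariable}, together with the normalization \eqref{normalizedhaar}, shows that a ball of radius $q^{-a}$ has volume $q^{-na}$. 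Hence $\int_M|\omega|$ is a finite sum of integer powers of $q$, which gives the form $N/q^m$.

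\textbf{Invariance modulo $q-1$.} Here $N/q^m$ is understood in $\mathbb Z[1/q]$, and we reduce modulo $q-1$ using $q\equiv 1$. Replacing one ball by its $q^n$ sub-balls changes the count $r$ by $q^n-1\equiv 0 \pmod{q-1}$. Similarly, each power $q^{-k}$ is $\equiv 1$. Let $\omega'=g\omega$ be another nonvanishing form. Refine the pieces so that $|g|$ is constant on each. Then both integrals are sums of powers of $q$ over the same number of pieces modulo subdivision, so they are congruent modulo $q-1$. Thus $i(M)$ depends only on $M$, and it equals the number of unit-type balls in a decomposition, taken modulo $q-1$.

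\textbf{Classification.} It remains to conclude from Theorem~\ref{serrethm1}. Since $M\cong {}^r(B_\epsilon(0))$ and $i(M)\equiv r$, we get $M\cong (B_\gamma(0))^{i(M)}$. Here the "iff" part of Theorem~\ref{serrethm1} guarantees that the number of copies is determined exactly modulo $q-1$.

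\textbf{Main obstacle.} The delicate step is the local constancy of $|f|$ for a nonvanishing analytic $f$. Compactness only gives a positive lower bound $\inf|f|>0$, and we must turn this into local constancy. We would argue via the power series expansion around each point: $|f(x)-f(a)|<|f(a)|$ for $x$ near $a$, and then compactness finishes the argument.
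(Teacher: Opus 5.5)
Your argument is correct and is essentially Serre's original proof: you decompose $M$ into polydiscs on which $|f|$ is constant, so every $\int_M|\omega|$ is a finite sum of powers of $q$, and you compare two forms on a common refinement. The paper gives no argument of its own here and simply cites \cite{serre1965, igusa2002}. In the final step you only need the ``if'' direction of Theorem~\ref{serrethm1} (split one coordinate of $\mathcal O^n$ into its $q$ residue classes, so one ball is isomorphic to $q$ balls), and it is cleaner not to invoke the ``only if'' direction there, since that direction is precisely what your invariant $i(M)$ proves.
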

This invariant $i(M)$ in essence is similar to the situation of a characteristic class or number for real/complex differential geometry, the change of underlying differential structure $\omega$ would not alter the invariant.  
\begin{expl}[Theorem~4.1, \cite{bradley2025}]
 Let $E$ be an elliptic curve over $K$ with the characteristic $p\neq2,3$. If it has good reduction, then its Weierstrass equation 
 \begin{equation}
 \overline E: \ y^2=x^3-\overline{27}\overline{c_4}x- \overline{54}\overline{c_6}   
 \end{equation}
 defined over the residue field $k=\mathcal O_k/\pi\mathcal O_k$ is non-singular. The $K$-rational points $E(K)$ forms a $1$-dimensional closed $p$-adic analytic submanifold of $\mathbb P^2(K)$. By Theorem 2.2.5 of~\cite{weil1982}, the Serre invariant is 
 \begin{equation}
i(E(K))=|\overline E(k)| \; \; \operatorname{mod}(q-1),     
 \end{equation}
 where  $q=|k|$ is the cardinality of the residue field $k$, and $|\overline E(k)|$ is the number of points in $\overline E(k)$.
\end{expl}

 \section{$p$-adic Lie group and Teichm\"{u}ller theory}\label{sec:padicdiffgroup}
We recall the notion of a $p$-adic Lie group, for which the smoothness of the multiplication map is replaced by local analyticity.  
\begin{defi}
A $p$-adic Lie group (possibly of infinite dimension) over local field $K$ is a manifold endowed with the structure of a group such that the multiplication map 
$m: G\times G\to G \; \ \text{via}: (g,h)\to gh  $ 
is locally analytic.
\end{defi}
There are several examples of $p$-adic Lie groups of finite dimension, such as the ball $(B_\epsilon(0),+)$ with addition, and the ball $(B_\epsilon(1),\cdot)$ with multiplication; see~\cite{schneider2011}. The infinite dimensional case, or more specifically, the diffeomorphism group of $p$-adic manifolds is of interest in many aspects. We have seen from Serre's theorem~\ref{serrethm1} that a compact $p$-adic manifold is essentially an $r$-copy of $p$-adic balls, so it suffices to investigate the diffeomorphism group of $^r(B_\epsilon(0))$. Note that $^r(B_\epsilon(0))\to^r\mathbb Q_p^n$ for $\epsilon\to\infty$, hence the diffeomorphism group of $\mathbb Q_p^n$ can be viewed as a limit of the diffeomorphism group of $^r\mathbb (B_\epsilon(0))$. For all of these reasons, we restrict ourselves to the study on diffeomorphism group of $B_\epsilon(0)$.

\subsection{The convenient  diffeomorphism group $\operatorname{Diff}^{\operatorname{an}}(B_\epsilon)$}
In~\cite{Lu2025}, the present author showed that the $L^p$-geometry on $\Diff_{-\infty}(\RR)$ gives rise to a sort of Teichm\"{u}ller theory on real line. It is natural to question whether there exists such formulation in a $p$-adic  field. The main ingredients consist of an $r$-th root map and a diffeomorphism group whose elements are sufficiently close to the identity. Therefore, we consider the analytic diffeomorphism group $\operatorname{Diff}^{\operatorname{an}}(B_\epsilon)$ in a $p$-adic ball $B_\epsilon:=B_\epsilon(0)$. 
\begin{align*}
\operatorname{Diff}^{1}(B_\epsilon)&:=\left\{\varphi=\id+f: f\in C^{1}(B_\epsilon),\; ||Df||_{\operatorname{lip}}\leq L<1,\text{ and } f(0)=0 \right\};\\    
\operatorname{Diff}^{\operatorname{an}}(B_\epsilon)&:=\left\{\varphi=\id+f: f\in C^{\operatorname{an}}(B_\epsilon),\; ||Df||_{\operatorname{lip}}\leq L<1,\text{ and } f(0)=0 \right\},    
\end{align*}
where $C^{1}(B_\epsilon)$ denotes the space of  all strictly differentiable  functions $f$, its subspace $C^{\operatorname{an}}(B_\epsilon)$ denotes the space of all locally analytic functions $f$, and $||Df||_{\operatorname{lip}}$ denotes the Lipschitz constant of the differential $Df$
\begin{equation}
||Df||_{\operatorname{lip}}:=\operatorname{sup}_{v_1\neq v_2\in B_\epsilon,x\in B_\epsilon}\frac{|D_xf(v_1)-D_xf(v_2)|}{|v_1-v_2|}.    
\end{equation}
In general, a locally analytic function must be strictly differentiable. This is why we introduce this notion, and it is useful in showing properties such as local invertibility; see~\cite{schneider2011}.

\begin{defi}
Let $K$ be a non-archimedean local field, and let $V$ and $W$ be two normed $K$-vector spaces with $U$ an open subset of $V$. The map $f:U\to W$ is called differentiable at $v_0\in U$ if there exists a continuous linear map $D_{v_0}f:V\to W$ such that for any $\epsilon>0$, there is an open neighborhood $U_\epsilon\subset U$ of $v_0$ such that for any $v\in U_\epsilon$,
\begin{equation}\label{strictdiff1}
|f(v)-f(v_0)-D_{v_0}f(v-v_0)|\leq \epsilon|v-v_0|.    
\end{equation} 
$f$ is called
strictly differentiable at $v_0\in U$ if there exists a continuous linear map $D_{v_0}f:V\to W$ such that for any $\epsilon>0$, there is an open neighborhood $U_\epsilon\subset U$ of $v_0$ such that for any $v_0,v_1\in U_\epsilon$,
\begin{equation}\label{strictdiff2}
|f(v_1)-f(v_2)-D_{v_0}f(v_1-v_2)|\leq \epsilon|v_1-v_2|.    
\end{equation}
\end{defi}
\begin{rem}
The strict differentiability is a locally constant property in essence. If we take any other base point $v_0'\in U_\epsilon$ and $D_{v_0'}f:=D_{v_0}f$, then condition~\ref{strictdiff2}
still holds. Moreover, the linear map $D_{v_0}f$ is uniquely determined: if we have another linear map $\widetilde D_{v_0}f$ satisfying condition~\ref{strictdiff2}, then a straightforward calculation shows
\begin{align*}
||\widetilde D_{v_0}f(v_1-v_2)-D_{v_0}f(v_1-v_2)||\leq\epsilon||v_1-v_2||,
\end{align*}
for arbitrarily small $\epsilon>0$. It is also clear  that  all the norms of differential are smaller than the Lipschitz constant, i.e., $||D_{v_0}f||\leq||Df||_{\operatorname{lip}}$.   
\end{rem}
It is not hard to see that the addition and composition of two strictly differentiable functions are still strictly differentiable.
\begin{prop}\label{strictdiff}
If $f$ (resp. $g$) are strictly differentiable at $g(v_0)$ (resp. at $v_0$) with the operator norms $D_{g(v_0)}f$ (resp. $D_{v_0}g$) bounded, then $f\circ g$ are strictly differentiable at $v_0$.
\end{prop}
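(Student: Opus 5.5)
We show that $f\circ g$ is strictly differentiable at $v_0$ with derivative $D_{g(v_0)}f\circ D_{v_0}g$, i.e.\ the chain rule. Set $w_0=g(v_0)$, $A=D_{v_0}g$, $B=D_{w_0}f$, and fix $\varepsilon>0$. We shall find a neighborhood $U$ of $v_0$ such that for all $v_1,v_2\in U$
\begin{equation*}
|f(g(v_1))-f(g(v_2))-BA(v_1-v_2)|\leq \varepsilon|v_1-v_2|.
\end{equation*}

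\textbf{Splitting the difference.} Write
\begin{equation*}
f(g(v_1))-f(g(v_2))-BA(v_1-v_2)=\bigl[f(g(v_1))-f(g(v_2))-B(g(v_1)-g(v_2))\bigr]+B\bigl[g(v_1)-g(v_2)-A(v_1-v_2)\bigr].
\end{equation*}
By the ultrametric inequality, it is enough to bound each bracket by $\varepsilon|v_1-v_2|$. This is simpler than in the archimedean case because no factor $2$ appears, although the argument would survive with $\varepsilon/2$ anyway.

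\textbf{Controlling $g$.} Choose $\delta>0$. Strict differentiability of $g$ at $v_0$ gives a neighborhood $U_1$ of $v_0$ on which
\begin{equation*}
|g(v_1)-g(v_2)-A(v_1-v_2)|\leq\delta|v_1-v_2|.
\end{equation*}
Consequently $|g(v_1)-g(v_2)|\leq \max(\|A\|,\delta)|v_1-v_2|$. This Lipschitz bound is where boundedness of $A$ is used. The second bracket is then bounded by $\|B\|\delta|v_1-v_2|$, which uses boundedness of $B$.

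\textbf{Controlling $f$.} Strict differentiability of $f$ at $w_0$ gives a neighborhood $W$ of $w_0$ on which
\begin{equation*}
|f(w_1)-f(w_2)-B(w_1-w_2)|\leq\delta|w_1-w_2|.
\end{equation*}
Strict differentiability implies continuity, so $g$ is continuous at $v_0$. Shrinking $U_1$ to $U=U_1\cap g^{-1}(W)$, which is still a neighborhood of $v_0$, ensures $g(v_1),g(v_2)\in W$. The first bracket is then bounded by
\begin{equation*}
\delta|g(v_1)-g(v_2)|\leq\delta\max(\|A\|,\delta)|v_1-v_2|.
\end{equation*}

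\textbf{Conclusion.} Choose $\delta$ so small that
\begin{equation*}
\max\bigl(\delta\max(\|A\|,\delta),\ \|B\|\delta\bigr)\leq\varepsilon,
\end{equation*}
which is possible since $\|A\|$ and $\|B\|$ are finite. This gives the required estimate on $U$.

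The only delicate step is making sure that $g$ maps a whole neighborhood of $v_0$ into the neighborhood $W$ where the estimate for $f$ holds. This follows from the continuity of $g$ at $v_0$ noted above. After that, the proof is the usual chain-rule bookkeeping.
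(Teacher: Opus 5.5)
Your proof is correct and follows essentially the same route as the paper's. Both split the error into the $f$-bracket, controlled by strict differentiability of $f$ near $g(v_0)$ and pulled back through $g$, and $D_{g(v_0)}f$ applied to the $g$-bracket, then use boundedness of both operator norms. Your version is slightly more careful than the paper's: you bound $|g(v_1)-g(v_2)|$ by $\max(\|A\|,\delta)\,|v_1-v_2|$ rather than by $\|D_{v_0}g\|\,|v_1-v_2|$, and you make explicit the continuity of $g$ needed for $U^g_{\epsilon'}\cap g^{-1}(U^f_\epsilon)$ to be a neighbourhood of $v_0$.
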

\begin{proof}
We only need to prove the composition is strictly differentiable. Since $f$ is strictly differentiable, we have
\begin{align*}
|f\circ g(v_1)- f\circ g(v_2)-D_{g(v_0)}f(g(v_1)-g(v_2))|\leq\epsilon |g(v_1)-g(v_2)|\leq \epsilon ||D_{v_0}g|||v_1-v_2|,   
\end{align*}
for any $g(v_1),g(v_2)\in U^f_\epsilon$ with $U^f_\epsilon$ an open neighborhood of $g(v_0)$. In addition, since $g$ is strictly differentiable at $v_0$, we have 
\begin{align*}
|D_{g(v_0)}f(g(v_1)-g(v_2))-D_{g(v_0)}f\circ D_{v_0}g(v_1-v_2)|\leq \epsilon'||D_{v_0}f|||v_1-v_2|,    
\end{align*}
for any $v_1,v_2\in U^g_{\epsilon'}$ with $U^g_{\epsilon'}$ an open neighborhood of $v_0$. Adding up these two inequalities, we finally get an inequality 
\begin{align*}
|f\circ g(v_1)- f\circ g(v_2)-D_{g(v_0)}f\circ D_{v_0}g(v_1-v_2)|\leq (\epsilon||D_{v_0}g||+\epsilon'||D_{v_0}f||)|v_1-v_2|,   
\end{align*}
for any $v_1,v_2\in U^g_{\epsilon'}\cap g^{-1}(U^f_{\epsilon})$ with $U^g_{\epsilon'}\cap g^{-1}(U^f_{\epsilon})$ an open neighborhood of $v_0$. The norms $||D_{g(v_0)}f||$ and $||D_{v_0}g||$ are bounded, so the scalar $\epsilon||D_{v_0}g||+\epsilon'||D_{v_0}f||$ can be arbitrarily close to zero as $\epsilon,\epsilon'\to 0^+$.
\end{proof}

Next we show that an arbitrary element of $\operatorname{Diff}^{\operatorname{an}}(B_\epsilon)$ is indeed a diffeomorphism (i.e., bianalytic map) of $B_\epsilon$. It should be noted that the argument of proof is also applicable to $\operatorname{Diff}^{1}(B_\epsilon)$, so $\operatorname{Diff}^{1}(B_\epsilon)$ is also a group with the differential structure of lower regularity. First we state a lemma regarding the estimate on the norm of inverse of $\varphi\in \operatorname{Diff}^{\operatorname{an}}(B_\epsilon)$.
\begin{lem}\label{lemdiffgroup}
Let $\psi=\varphi^{-1}$ and $\varphi(0)=0$. Then it can be written as $\psi(y)=y+g(y)$ such that the operator norm $||D_0g||\leq L<1$.   
\end{lem}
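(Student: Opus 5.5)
We work with $\varphi=\id+f\in\operatorname{Diff}^{\operatorname{an}}(B_\epsilon)$, so $f(0)=0$ and $\|Df\|_{\operatorname{lip}}\leq L<1$, and we assume $\psi=\varphi^{-1}$ exists. We set $g:=\psi-\id$, so that $\psi(y)=y+g(y)$. Since $\varphi(0)=0$, we also get $\psi(0)=0$ and hence $g(0)=0$.

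The key identity comes from writing $\varphi\circ\psi=\id$ out. For $y=\varphi(x)$ we have $x=\psi(y)=y+g(y)$, and therefore
\begin{equation*}
y=\psi(y)+f(\psi(y))=y+g(y)+f(\psi(y)),
\end{equation*}
that is, $g(y)=-f(\psi(y))$. Differentiating this at $y=0$ with the chain rule (Proposition~\ref{strictdiff}) and using $\psi(0)=0$ gives
\begin{equation*}
D_0g=-D_0f\circ D_0\psi=-D_0f\circ(\id+D_0g).
\end{equation*}
Here strict differentiability of $\psi$ at $0$ must be justified first. My plan is to argue from the ultrametric inequality: $|f(x_1)-f(x_2)|\le L'|x_1-x_2|$ with $L'\le\|D_0f\|$ locally, which will be made precise through strict differentiability. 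This shows that $\varphi$ is an isometry on small balls, since $|\varphi(x_1)-\varphi(x_2)|=|x_1-x_2|$ when $L'<1$. So $\psi$ is Lipschitz, and strict differentiability of $\psi$ follows from the standard inverse-function argument of \cite{schneider2011}.

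Solving the linear equation, we get $(\id+D_0f)\circ D_0g=-D_0f$, and $\id+D_0f$ is invertible because $\|D_0f\|\le\|Df\|_{\operatorname{lip}}\le L<1$, using the remark above. Its inverse is the Neumann series $\sum_{k\ge0}(-D_0f)^k$, which converges since $\|D_0f\|<1$. Thus
\begin{equation*}
D_0g=-(\id+D_0f)^{-1}D_0f.
\end{equation*}
In the non-archimedean setting each term of the series has norm at most $\|D_0f\|^k\le 1$, so $\|(\id+D_0f)^{-1}\|\le1$. It follows that
\begin{equation*}
\|D_0g\|\le\|D_0f\|\le L<1.
\end{equation*}

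The main obstacle is establishing that $\psi$ is strictly differentiable at $0$, so that the chain rule applies. The algebraic estimate is immediate once that is in place, because the ultrametric norm makes the Neumann series bound exactly $1$ rather than $1/(1-L)$. As a fallback, one can avoid differentiability of $\psi$ altogether and bound difference quotients directly. From $g(y_1)-g(y_2)=-(f(\psi(y_1))-f(\psi(y_2)))$ and the isometry property $|\psi(y_1)-\psi(y_2)|=|y_1-y_2|$, the strict-differentiability estimate for $f$ at $0$ gives the linear approximation $D_0g=-D_0f\circ(\id+D_0g)$ directly. The same norm bound then follows.
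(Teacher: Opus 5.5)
Your proof is correct and is essentially the paper's argument: from $\varphi\circ\psi=\id$ you get $g=-f\circ\psi$, differentiate at $0$ by the chain rule, solve to get $D_0g=-(\id+D_0f)^{-1}D_0f$, and use the ultrametric bound $\|(\id+D_0f)^{-1}\|\le 1$ (in one variable, $|1+D_0f|=1$) to conclude $\|D_0g\|\le\|D_0f\|\le L$. You also justify that $\psi$ is strictly differentiable at $0$, either via local isometry plus the inverse-function argument or by solving the difference relation for $g(y_1)-g(y_2)$ directly, which the paper passes over; the only slip is that strict differentiability gives a local Lipschitz constant $\max(\varepsilon,\|D_0f\|)$ rather than one bounded by $\|D_0f\|$, and this is harmless because only $L'<1$ is used.
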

\begin{proof}
From the identity $\varphi\circ \psi(y)=y$, we see that $g(y)=-f(y+g(y))$. Differentiating the equation with respect to $y$, we obtain
\begin{align*}
D_0g(y)=\frac{-D_0f(y+g(y))}{1+D_0f(y+g(y))}.
\end{align*}
Here we use the chain rule for $p$-adic analysis. Therefore, the conclusion follows by the inequality
$||D_0f||\leq L<1$.
\end{proof}
\begin{thm}\label{thmdifflie}
$\operatorname{Diff}^{\operatorname{an}}(B_\epsilon)$ (resp. $\operatorname{Diff}^{\operatorname{1}}(B_\epsilon)$) is an infinite dimensional Lie group (resp. group) consisting of diffeomorphisms of $B_\epsilon$.    
\end{thm}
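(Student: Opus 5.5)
The plan is to show four things in turn: every $\varphi=\id+f$ in the class is a bijection of $B_\epsilon$; its inverse is again of the form $\id+g$ with $g$ in the same class; the class is closed under composition; and the group operations are locally analytic for a natural manifold structure. The key tool is the ultrametric inequality. From $\|D_xf\|\le L<1$ and strict differentiability, I expect to get the global contraction estimate
\begin{equation*}
|f(v_1)-f(v_2)|\le L|v_1-v_2|,\qquad v_1,v_2\in B_\epsilon.
\end{equation*}
This comes from covering the segment-free ball by the neighbourhoods $U_\epsilon$ of strict differentiability and using that in the non-archimedean setting the local estimate propagates: two points in a common ball are compared directly via $D_{v_0}f$, since balls are either nested or disjoint. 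Consequently $|\varphi(v_1)-\varphi(v_2)|=|v_1-v_2|$, by the strict triangle inequality. So $\varphi$ is an isometry, in particular injective, and $\varphi(B_\epsilon)\subset B_\epsilon$ because $\varphi(0)=0$.

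For surjectivity, given $y\in B_\epsilon$, I will solve $x=y-f(x)$ by the Banach fixed point theorem on the complete space $B_\epsilon$. The map $x\mapsto y-f(x)$ preserves $B_\epsilon$ by the ultrametric inequality and is an $L$-contraction. This produces $\psi=\varphi^{-1}=\id+g$ with $g(y)=-f(\psi(y))$. Lemma~\ref{lemdiffgroup} already gives $\|D_0g\|\le L$, and the same computation at every point, namely
\begin{equation*}
D_yg=-D_{\psi(y)}f\,(1+D_{\psi(y)}f)^{-1},
\end{equation*}
together with $|1+D f|=1$, yields $\|D_yg\|\le L$ uniformly. Local analyticity of $\psi$ follows from the $p$-adic inverse function theorem (\cite{schneider2011}), since $D_x\varphi=1+D_xf$ is invertible. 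For the Lipschitz bound on $Dg$, I will write $Dg$ as a composition of $Df\circ\psi$ with the analytic map $A\mapsto -A(1+A)^{-1}$, which is $1$-Lipschitz on $\{\|A\|<1\}$, and use that $\psi$ is an isometry.

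For closure under composition, I will write $\varphi_1\circ\varphi_2=\id+f_2+f_1\circ(\id+f_2)$. The new perturbation $h=f_2+f_1\circ\varphi_2$ vanishes at $0$, is locally analytic (by composition, Proposition~\ref{strictdiff} in the $C^1$ case), and has $\|D h\|\le\max(L_1,L_2)$ by the ultrametric inequality applied to
\begin{equation*}
D_x h = D_x f_2 + D_{\varphi_2 x}f_1\,(1+D_xf_2).
\end{equation*}
The Lipschitz constant of $Dh$ is bounded the same way, again using that $\varphi_2$ is an isometry. The identity element is $\id$ with $f=0$. This settles the group statement for both $\operatorname{Diff}^{\operatorname{an}}(B_\epsilon)$ and $\operatorname{Diff}^{1}(B_\epsilon)$. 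One caveat: the bound $L<1$ is not fixed across the group, so I will read the definition as requiring $\|Df\|_{\operatorname{lip}}<1$ for some $L$ depending on $\varphi$, and all of the above estimates respect this.

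For the Lie structure, I will model $\operatorname{Diff}^{\operatorname{an}}(B_\epsilon)$ as an open subset of the affine space $\id+\{f\in C^{\operatorname{an}}(B_\epsilon): f(0)=0\}$, topologized by the locally convex inductive-limit topology on locally analytic functions. Composition $(f_1,f_2)\mapsto f_2+f_1\circ(\id+f_2)$ is then locally analytic, because it is given by substituting power series into power series on small balls. I expect this to be the main obstacle: making the infinite-dimensional analyticity of composition and inversion precise requires controlling the radii of the covering balls uniformly. I would first try to reduce to Banach spaces of functions analytic on a fixed finite cover by balls of radius $p^{-k}$. There, composition is a convergent power series in the coefficients, and inversion is handled by the analytic implicit function theorem applied to $F(\varphi,\psi)=\varphi\circ\psi-\id$.
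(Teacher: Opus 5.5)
There is a genuine gap at the step everything else rests on. Your route (isometry, Banach fixed point for surjectivity, inverse function theorem, ultrametric bound for composites) is the same as the paper's, but you derive the global estimate $|f(v_1)-f(v_2)|\le L|v_1-v_2|$ from the pointwise bound $\|D_xf\|\le L<1$ plus strict differentiability, and that derivation is false. There is no mean value inequality in the $p$-adic setting. Strict differentiability only controls $f(v_1)-f(v_2)$ for $v_1,v_2$ in a small neighbourhood $U_\varepsilon(v_0)$. The ultrametric structure also blocks any propagation: every chain $v_1=x_0,x_1,\dots,x_k=v_2$ satisfies $|v_1-v_2|\le\max_i|x_i-x_{i+1}|$, so two points in disjoint small balls can never be joined by short steps.

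The paper's own Remark~\ref{rem:lipmapnec} is a counterexample. On $\ZZ_p$ (which is $B_\epsilon$ for $1<\epsilon\le p$), $f(x)=-x^p$ is a polynomial with $f(0)=0$ and $\|D_xf\|=|px^{p-1}|\le p^{-1}$. Even $x\mapsto D_xf$ is $p^{-1}$-Lipschitz. Yet $|f(1)-f(0)|=1$, and $\varphi=\id+f$ satisfies $\varphi(0)=\varphi(1)=0$. So under the reading of the hypothesis you adopt, the statement is false. Your isometry claim, the contraction in the fixed-point step, and your closure checks all fail with it.

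What the paper's argument needs is that $\|Df\|_{\operatorname{lip}}\le L<1$ be a global Lipschitz bound on $f$ itself. Its injectivity estimate $|\varphi(v_1)-\varphi(v_2)|\ge(1-L)|v_1-v_2|$ rests exactly on $|f(v_1)-f(v_2)|\le L|v_1-v_2|$. (Its surjectivity step slips the same way you do, attributing the contraction to $\|D_{v_0}f\|\le L$.)

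Take the global Lipschitz bound as the hypothesis and your outline goes through. Closure must then be checked for this same global bound, not via pointwise bounds on $D_yg$ and $D_xh$, which by the example above do not imply it. This is immediate once you know $\varphi$ and $\psi=\varphi^{-1}$ are isometries:
\begin{itemize}
\item for the inverse, $|g(y_1)-g(y_2)|=|f(\psi(y_1))-f(\psi(y_2))|\le L|y_1-y_2|$;
\item for the composite $h=f_2+f_1\circ\varphi_2$, $|h(v_1)-h(v_2)|\le\max(L_1,L_2)|v_1-v_2|$.
\end{itemize}
In particular a single $L$ can be kept fixed across the group, so your caveat about varying $L$ is unnecessary. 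Your infinite-dimensional Lie structure is still only a sketch, though the paper does not go beyond asserting local analyticity of composition either.
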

\begin{proof}
We first show that any $\varphi\in \operatorname{Diff}^{\operatorname{an}}(B_\epsilon)$ is a bijective onto $B_\epsilon$. Indeed, $\varphi$ is an injective from the following direct calculation
\begin{align*}
|\varphi(v_1)-\varphi(v_2)|&=|(v_1-v_2)+f(v_1)-f(v_2)|\\&\geq (1-L)|v_1-v_2|.    
\end{align*}
The surjectivity of $\varphi$ is a consequence of Banach fixed point theorem. Let $w\in B_\epsilon$ be an arbitrary fixed vector. For any $v\in B_\epsilon$, we take $v'=w+v-\varphi(v)$. Then we compute
\begin{align*}
|v'|&\leq\operatorname{max}\left(|v'-v|, |v|\right)   \leq\operatorname{max}(|w-\varphi(v)|, \epsilon)\\&   \leq\operatorname{max}(|w-\varphi(0)|,|\varphi(v)-\varphi(0)|, \epsilon)\leq\operatorname{max}(|v|, \epsilon)\leq \epsilon
\end{align*}
Hence we may define a sequence $(v_n)_{n\geq0}$ in $B_\epsilon$ by
\begin{align*}
v_{n+1}:=w+v_n-\varphi(v_n).    
\end{align*}
It is clear from $||D_{v_0}f||\leq L<1$ that there exists $0<\epsilon<1$ such that 
\begin{align*}
||v_{n+1}-v_n||\leq\epsilon||v_n-v_{n-1}||.    
\end{align*}
It follows from this contraction of $(v_n)$ that it is a Cauchy sequence converging to a limit $v=\operatorname{lim}v_n$ in the closed ball $B_\epsilon$. By passing to the limit of the defining equation and using the continuity of $\varphi$, we obtain $\varphi(v)=w$. This concludes the proof of surjectivity.

Next we check the local diffeomorphism of $\varphi$. Without loss of generality, we assume that $f$ is strictly differentiable at $0$ with respect to a neighborhood $U$ of $0$.  For any $v_1,v_2\in B_\epsilon$, we have 
\begin{align*}
|\varphi(v_1)-\varphi(v_2)-(v_1-v_2)|&=|f(v_1)-f(v_2)|\\&\leq\operatorname{max}\left(|f(v_1)-f(v_2)-D_{0}f(v_1-v_2)|, |D_{0}f(v_1-v_2)|\right)   \\&\leq\operatorname{max}(\epsilon, L) |v_1-v_2|. 
\end{align*}
Then applying Lemma~4.2 in~\cite{schneider2011} and using the fact $\varphi(0)=0$, we deduce that $\varphi$ induces a homeomorphism
$U\to V=\varphi(U)$. We can further show that the inverse $\varphi^{-1}$ is contained in $\operatorname{Diff}^{\operatorname{an}}(B_\epsilon)$.
Indeed, $\varphi^{-1}$ is locally analytic by invertibility of power series. The strict differentiability of $\varphi^{-1}$ comes from almost the same line of proof in~\cite[Proposition~4.3]{schneider2011}. This assertion is thus concluded by Lemma~\ref{lemdiffgroup}.

It remains to show that the composition $\phi\circ\varphi\in \operatorname{Diff}^{\operatorname{an}}(B_\epsilon)$ for any $\varphi,\phi\in \operatorname{Diff}^{\operatorname{an}}(B_\epsilon)$. The explicit expressions $\varphi(x)=x+f(x)$ and $\phi(y)=y+g(y)$ yield 
\begin{align*}
\phi\circ\varphi(x)=x+\left(f(x)+ g(x+f(x))\right).   
\end{align*}
The local analyticity easily follows from this equality, and a straightforward $p$-adic analysis on the norm shows 
\begin{align*}
||D_0f+D_0 g(x+f(x))||&=||D_0f+D_0 g\circ (x+f(x))\cdot(1+D_0f)||
\\&\leq\operatorname{max}\left(||D_0f||, ||D_0 g\circ (x+f(x))||\cdot||(1+D_0f)||\right)\leq L.
\end{align*}
Putting all of these together,  we finish the proof with the strict differentiability shown in Proposition~\ref{strictdiff}.
\end{proof}
\begin{rem}\label{rem:lipmapnec}
One may pose the question whether we can replace the condition $||Df||_{\operatorname{lip}}<1$ by a weaker one $||Df||<1$. Unfortunately, this is not the case. To illustrate this, we take $f(x)=-x^p$. Then $||D_xf||=||-px^{p-1}||<1$ on $B_1$, but $\varphi(0)=\varphi(1)=0$, so $\varphi$ is no longer an injective on $B_1=\ZZ_p$. However, this does not mean that the boundness of differential prevents the function from being Lipschitz. If the differential $Df$ is bounded on $B_\epsilon$, i.e., there exists a constant $C>0$ such that $||D_xf||\leq C$ for all $x\in B_\epsilon$, then $f$ is a Lipschitz function with Lipschitz constant likely greater than $C$, e.g., $C<1$ in the above counter-example.
\end{rem}

As shown in~\cite{Lu2025} for the case of real line, one can enlarge $\operatorname{Diff}^{\operatorname{an}}(\mathbb Q_p)$ to include all affine transformations
\begin{align*}
\widetilde\Diff^{\operatorname{an}}(\mathbb Q_p)=\left\{\varphi=a\cdot\id+b+f: (\varphi-b)/a\in\Diff^{\operatorname{an}}(B_\epsilon), a\in\mathbb Q_p^\times,\ b\in B_\epsilon \right\},    
\end{align*}
where $\mathbb Q_p^\times:=\mathbb Q_p\setminus0$ is the set of invertible elements in 
$\mathbb Q_p$.
\begin{cor}\label{diffgrouplarge}
 The set $\widetilde\Diff^{\operatorname{an}}(\mathbb Q_p)$ defined above is indeed an infinite dimensional Lie group.   
\end{cor}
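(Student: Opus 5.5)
The plan is to show that $\widetilde\Diff^{\operatorname{an}}(\mathbb Q_p)$ is a semidirect-type product of the affine group $\operatorname{Aff}=\{x\mapsto ax+b\}$ with $\Diff^{\operatorname{an}}(B_\epsilon)$, and then transport the Lie group structure of Theorem~\ref{thmdifflie}. Every element decomposes as $\varphi=A_{a,b}\circ\eta$ with $A_{a,b}(x)=ax+b$ and $\eta=(\varphi-b)/a\in\Diff^{\operatorname{an}}(B_\epsilon)$. Since $\eta(0)=0$, $\eta'(0)$ and $\varphi(0)$ determine $a$ and $b$, so the decomposition is unique. This gives a bijection
\begin{equation*}
\Phi:\ \mathbb Q_p^\times\times B_\epsilon\times\Diff^{\operatorname{an}}(B_\epsilon)\to\widetilde\Diff^{\operatorname{an}}(\mathbb Q_p),
\end{equation*}
and we declare $\Phi$ to be a chart. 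The source is a product of a finite-dimensional $p$-adic manifold with the manifold $\Diff^{\operatorname{an}}(B_\epsilon)$, so it is a manifold.

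For closedness under composition, write $\varphi_i=A_i\circ\eta_i$. Then
\begin{equation*}
\varphi_1\circ\varphi_2=A_1A_2\circ\bigl(A_2^{-1}\eta_1A_2\bigr)\circ\eta_2 .
\end{equation*}
It therefore suffices to show that conjugation $\eta\mapsto A^{-1}\eta A$ preserves the class of maps $\id+f$ with $f$ analytic and $\|Df\|_{\operatorname{lip}}\le L$, after re-centering at $0$. Composition of such maps is already handled by Theorem~\ref{thmdifflie}. For a dilation $A=a\,\id$ the conjugate is $x\mapsto x+a^{-1}f(ax)$. Its differential is $D_{ax}f$, so it has Lipschitz constant $|a|\,\|Df\|_{\operatorname{lip}}$ on the rescaled domain. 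The same computation applies to the translation part, after subtracting the value at $0$ so that the normalisation $f(0)=0$ holds again; the remaining constant is absorbed into $b$.

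Inverses are handled the same way: $\varphi^{-1}=\eta^{-1}\circ A^{-1}=A^{-1}\circ(A\eta^{-1}A^{-1})$, where $\eta^{-1}\in\Diff^{\operatorname{an}}(B_\epsilon)$ by Theorem~\ref{thmdifflie}. Local analyticity of multiplication in the chart $\Phi$ then follows because each component map is analytic. The map $(a,b)\mapsto A_1A_2$ is polynomial. The map $(a,\eta)\mapsto a^{-1}\eta(a\,\cdot)$ is analytic in the coefficients on each ball. Composition on $\Diff^{\operatorname{an}}(B_\epsilon)$ is analytic by Theorem~\ref{thmdifflie}.

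The main obstacle is that a dilation by $a$ with $|a|>1$ enlarges both the domain and the Lipschitz constant, so the conjugate need not lie in $\Diff^{\operatorname{an}}(B_\epsilon)$ with the same $L$ and $\epsilon$. To get around this, I would first interpret the definition with the conditions on $\eta$ imposed on the ball $a^{-1}(B_\epsilon-b)$ adapted to $\varphi$. I would then check that the ultrametric Lipschitz estimate from the proof of Theorem~\ref{thmdifflie} (the $\max$ inequality) still gives constant $\le L$ on the transported ball. This is because $\|Df\|_{\operatorname{lip}}$ scales by $|a|$ while distances in the domain scale by $|a|^{-1}$, which I expect to cancel. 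If this cancellation fails, the fallback is to restrict to $a\in\ZZ_p^\times$, which gives a group directly, and to treat general $a$ through the filtration by $|a|$.
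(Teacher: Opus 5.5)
Your strategy (write $\varphi=A_{a,b}\circ\eta$, conjugate, re-centre, and absorb the constant into the affine part) does check closure under composition. The paper's proof does not. It only notes that dilations and translations are diffeomorphisms, and that the homeomorphism step of Theorem~\ref{thmdifflie} survives a translation because one works on all of $\mathbb Q_p$ (Lemma~4.2 of~\cite{schneider2011}). However, two steps of yours fail as written.

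First, the decomposition is not unique, so $\Phi$ is not a bijection and cannot serve as a chart. The definition of $\Diff^{\operatorname{an}}(B_\epsilon)$ imposes $f(0)=0$ but does not normalize $D_0\eta$. In particular, $\lambda\,\id=\id+(\lambda-1)\id$ lies in $\Diff^{\operatorname{an}}(B_\epsilon)$ whenever $|\lambda-1|\le L$. Then $A_{a,b}\circ\eta=A_{a/\lambda,b}\circ\bigl((\lambda\,\id)\circ\eta\bigr)$, with $(\lambda\,\id)\circ\eta\in\Diff^{\operatorname{an}}(B_\epsilon)$ by Theorem~\ref{thmdifflie}. Your recovery of $a$ from $\eta'(0)$ tacitly assumes $\eta'(0)=1$. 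So $\Diff^{\operatorname{an}}$ meets the affine group in the infinite group $\{\lambda\,\id:|\lambda-1|\le L\}$, the fibres of $\Phi$ are copies of it, and there is no (semi)direct product. The repair is to take the subgroup $\{\eta\in\Diff^{\operatorname{an}}(B_\epsilon): D_0\eta=1\}$ as the third factor, which forces $b=\varphi(0)$ and $a=D_0\varphi$. After each conjugation, move the dilation by $D_0\tilde\eta=1+D_bf$ (note $|D_bf|\le L$) into the affine part.

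Second, the obstruction at $|a|\neq1$ is not a Lipschitz-constant problem, and your remedy does not remove the real one. Theorem~\ref{thmdifflie} uses the estimate $|\varphi(v_1)-\varphi(v_2)|\ge(1-L)|v_1-v_2|$ (compare Remark~\ref{rem:lipmapnec}), so the relevant quantity is the Lipschitz constant of $f$ itself. The map $x\mapsto a^{-1}\bigl(f(ax+b)-f(b)\bigr)$ has exactly the same Lipschitz constant as $f$ for every $a$, so nothing needs to cancel. Under your reading (the Lipschitz constant of $x\mapsto D_xf$) the hoped-for cancellation is false: the rescaling of the domain is precisely what produces the factor $|a|$.

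The genuine problem is the domain. For finite $\epsilon$, $\varphi=A_{a,b}\circ\eta$ maps $B_\epsilon$ onto $aB_\epsilon+b$, which is a different ball when $|a|\neq1$. Such $\varphi$ are not self-maps of $B_\epsilon$, so compositions and inverses are undefined. The affine parameters also leave the allowed set, since $a_1b_2+b_1\notin B_\epsilon$ in general. Imposing the conditions on $a^{-1}(B_\epsilon-b)$ only yields maps $a^{-1}B_\epsilon\to B_\epsilon$, i.e.\ a groupoid, and no filtration by $|a|$ makes this a group.

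The set becomes a group once $B_\epsilon$ is replaced by $B_\infty=\mathbb Q_p$ with $b\in\mathbb Q_p$, as the notation $\widetilde\Diff^{\operatorname{an}}(\mathbb Q_p)$, the paper's appeal to a homeomorphism of $\mathbb Q_p$, and its later restatement of the definition all suggest. Alternatively, restrict to $a\in\ZZ_p^\times$. In either setting your conjugation and re-centring argument, with the normalization above, goes through.
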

\begin{proof}
In addition to group actions regarding small perturbations of identity, we need to know how translation and  scaling act on  $p$-adic field $\mathbb Q_p$. For any invertible element $a$ in $\mathbb Q_p$, the scaling map $S(x)=ax$ is a diffeomorphism. So it suffices to consider a translation map $T(x)=x+b$. Note that the condition $\varphi(0)=0$ only appears in the proof of Theorem~\ref{thmdifflie} to show that $\varphi$ is a homeomorphism. However, translation by an element $b\in B_\epsilon$ would not affect the proof, since in that case, we actually have a homeomorphism $\varphi:\mathbb Q_p\to \mathbb Q_p$ by Lemma~4.2 in~\cite{schneider2011}. %It is a well-known fact that every point of a $p$-adic ball is its center, so we have $B_\epsilon(0)= B_\epsilon(b)$.
\end{proof}

\subsection{A remark on higher dimensional case}

Unlike the case of real line, it is not hard to extend the preceding arguments to higher dimensional $p$-adic balls. We only need to replace the $p$-adic ball in $K=\mathbb Q_p$ with that in $K^n$, and then all derivatives are replaced by Jacobian matrices. In addition, the Lemma~4.2 and Proposition~4.3  used in the proofs have already been proved in a general setting in~\cite{schneider2011}.
Therefore, we restate the main theorems for potential use.
\begin{thm}\label{thmdiffliencase}
Let $B_\epsilon$ be a  
$p$-adic ball in $\mathbb Q_p^n$.
$\operatorname{Diff}^{\operatorname{an}}(B_\epsilon)$ (resp. $\operatorname{Diff}^{\operatorname{1}}(B_\epsilon)$) is an infinite dimensional Lie group (resp. group).    
\end{thm}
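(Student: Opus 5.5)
The plan is to repeat the proof of Theorem~\ref{thmdifflie} in the multivariable setting. Derivatives $D_xf$ become Jacobian matrices in $M_n(\mathbb Q_p)$ with the operator norm induced by the sup-norm on $\mathbb Q_p^n$. The scalar manipulations are replaced by ultrametric estimates on this operator norm.

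First I will establish injectivity and surjectivity of $\varphi=\id+f$ on $B_\epsilon\subset\mathbb Q_p^n$. Injectivity follows from the same estimate
\begin{align*}
|\varphi(v_1)-\varphi(v_2)|\geq(1-L)|v_1-v_2|,
\end{align*}
which in the ultrametric case even improves to equality $|\varphi(v_1)-\varphi(v_2)|=|v_1-v_2|$, because $|f(v_1)-f(v_2)|\leq L|v_1-v_2|<|v_1-v_2|$. This Lipschitz bound on $f$ is deduced from the Lipschitz bound on $Df$ together with $f(0)=0$ and $D_0f$ of norm $\le L$, exactly as in dimension one. Surjectivity comes from the contraction $v\mapsto w+v-\varphi(v)=w-f(v)$. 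This map preserves $B_\epsilon$ since $|w-f(v)|\leq\max(|w|,L|v|)<\epsilon$, and it is $L$-contracting. Completeness of the closed ball in $\mathbb Q_p^n$ then gives a fixed point by the Banach fixed point theorem.

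Next I will show the inverse is again in the group. Local analytic invertibility is provided by Lemma~4.2 and Proposition~4.3 of~\cite{schneider2011}, which are stated for $K^n$. The multivariable analogue of Lemma~\ref{lemdiffgroup} is then needed: writing $\psi=\id+g$, differentiation of $g=-f\circ(\id+g)$ gives
\begin{align*}
D g=-(I+Df\circ\psi)^{-1}Df\circ\psi .
\end{align*}
Since $\|Df\|\le L<1$, the Neumann series gives $\|(I+Df)^{-1}\|=1$ in the non-archimedean operator norm, so $\|Dg\|\le L$. The Lipschitz estimate for $Dg$ should follow similarly, using the resolvent identity $A^{-1}-B^{-1}=A^{-1}(B-A)B^{-1}$.

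For composition, $\phi\circ\varphi=\id+(f+g\circ\varphi)$ is locally analytic. Its derivative is
\begin{align*}
Df+(Dg\circ\varphi)(I+Df),
\end{align*}
which has norm at most $L$ by the ultrametric inequality, since $\|I+Df\|=1$. Strict differentiability is Proposition~\ref{strictdiff}, whose proof is dimension-free. The Lie group structure (the charts) is then inherited exactly as in Theorem~\ref{thmdifflie}, and the statement for $\Diff^1$ uses only the strict differentiability parts of the argument.

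The main obstacle I expect is the Lipschitz control of $Dg$ and of the composite derivative. Keeping $\|D(\cdot)\|_{\operatorname{lip}}\le L$ requires products of matrix-valued Lipschitz functions, and these need the sup-norm bounds $\|Df\|\le L$ on all of $B_\epsilon$ together with $\epsilon\le 1$-type scaling. I would first try normalizing so that all the factors have norm at most $1$, and then apply the ultrametric product rule $\|AB-A'B'\|\le\max(\|A-A'\|\|B\|,\|A'\|\|B-B'\|)$.
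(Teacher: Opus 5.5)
Your overall route (rerun Theorem~\ref{thmdifflie} with Jacobian matrices, using Lemma~4.2 and Proposition~4.3 of~\cite{schneider2011} in $K^n$) is the paper's. However, your first step has a genuine gap. You claim that $|f(v_1)-f(v_2)|\le L|v_1-v_2|$ ``is deduced from the Lipschitz bound on $Df$ together with $f(0)=0$ and $D_0f$ of norm $\le L$, exactly as in dimension one.'' That is a mean value inequality, and it fails over $\mathbb Q_p^n$. No bound on $x\mapsto D_xf$, whether a sup bound or a Lipschitz bound, controls the Lipschitz constant of $f$.

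For a counterexample, take $a\in B_\epsilon\setminus\{0\}$. Set $f\equiv -a$ on the ball $\{|x-a|<|a|\}$ and $f\equiv 0$ on its complement in $B_\epsilon$. Then $f$ is locally analytic, $f(0)=0$ and $Df\equiv 0$, yet $\varphi(a)=\varphi(0)=0$. The polynomial example $f(x)=-x^p$ on $\ZZ_p$ from Remark~\ref{rem:lipmapnec} fails in the same way while satisfying $|D_xf|\le p^{-1}$ and $|D_xf-D_yf|\le p^{-1}|x-y|$.

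In dimension one the paper does not derive this estimate at all. Its injectivity argument uses $|f(v_1)-f(v_2)|\le L|v_1-v_2|$ directly as the hypothesis. The displayed formula for $\|Df\|_{\operatorname{lip}}$ is misleading: as written it only gives $\sup_x\|D_xf\|$, which Remark~\ref{rem:lipmapnec} says is not enough. Your injectivity, the invariance $|w-f(v)|<\epsilon$, and the contraction in your surjectivity step all depend on that estimate. So under your reading none of them is established.

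The fix is to take the defining condition to be the Lipschitz bound on $f$ itself. Then your observation that $\varphi$ is an isometry does all the remaining work, and your ``main obstacle'' disappears. For $\psi=\varphi^{-1}=\id+\tilde g$ one has $\tilde g=-f\circ\psi$. Hence $\tilde g(0)=0$ and $|\tilde g(w_1)-\tilde g(w_2)|\le L|\psi(w_1)-\psi(w_2)|=L|w_1-w_2|$. For a second element $\phi=\id+g$,
\begin{align*}
|(f+g\circ\varphi)(v_1)-(f+g\circ\varphi)(v_2)|\le\max\bigl(L|v_1-v_2|,\,L|\varphi(v_1)-\varphi(v_2)|\bigr)=L|v_1-v_2|.
\end{align*}
Your Neumann-series bounds on $\|D\tilde g\|$ and on the composite derivative are correct. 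But they are consequences of these estimates, not substitutes for them; the same caveat applies to the derivative bound in Lemma~\ref{lemdiffgroup}.

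Drop the planned resolvent-identity control of the Lipschitz constant of $x\mapsto D_x\tilde g$. By the examples above, that condition does not even make $\varphi$ injective. Local analyticity of the inverse via~\cite{schneider2011} and strict differentiability via Proposition~\ref{strictdiff} then go through as you describe.
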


\subsection{Profinite group $\operatorname{Diff}^{\operatorname{an}}(B_\epsilon)$}

We consider the modulo $p^m$-version of $\operatorname{Diff}^{\operatorname{an}}(B_\epsilon)$ on a modulo $p^m$-ball $\overline B_\epsilon$ for $\epsilon\leq1$,
\begin{equation}
\operatorname{Diff}_{m}^{\operatorname{an}}(\overline B_\epsilon):=\left\{\overline\varphi \; \operatorname{mod}(p^m):
\varphi\in \operatorname{Diff}^{\operatorname{an}}(B_\epsilon)
\right\}.    
\end{equation}
The modulo action preserves the group structure, though it eliminates local analyticity. 
\begin{prop}
$\operatorname{Diff}_{m}^{\operatorname{an}}(\overline B_\epsilon)$ is a finite group.  In particular,
$\operatorname{Diff}_{1}^{\operatorname{an}}(\overline B_\epsilon)=\{\operatorname{id}\}$.   
\end{prop}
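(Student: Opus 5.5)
Fix $\epsilon\le 1$, so that $B_\epsilon\subseteq \mathbb Z_p$, and let $\overline B_\epsilon$ be the image of $B_\epsilon$ in $\mathbb Z_p/p^m\mathbb Z_p$. This image is a finite set with at most $p^m$ elements. The plan is to show that reduction modulo $p^m$ is well defined on $\operatorname{Diff}^{\operatorname{an}}(B_\epsilon)$, that it is compatible with composition, and that its image is contained in the finite group of permutations of $\overline B_\epsilon$.

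\textbf{Step 1: well-definedness.} For $\varphi=\id+f\in\operatorname{Diff}^{\operatorname{an}}(B_\epsilon)$ we want $\overline\varphi(\bar x):=\varphi(x)\bmod p^m$ to depend only on $\bar x$. Suppose $x\equiv y \bmod p^m$, i.e.\ $|x-y|_p\le p^{-m}$. In the injectivity computation of Theorem~\ref{thmdifflie} the same ultrametric estimate also gives the upper bound
\begin{align*}
|\varphi(x)-\varphi(y)|\le \max(|x-y|,|f(x)-f(y)|)\le |x-y|,
\end{align*}
using that $f$ is $L$-Lipschitz with $L<1$. This Lipschitz bound follows from strict differentiability together with $\|D_xf\|\le \|Df\|_{\operatorname{lip}}\le L$, as in the local estimate inside the proof of Theorem~\ref{thmdifflie}. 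The step I expect to be the main obstacle is exactly this point: passing from the local strict-differentiability inequality to a \emph{global} inequality $|f(x)-f(y)|\le L|x-y|$ on all of $B_\epsilon$. Remark~\ref{rem:lipmapnec} shows the issue is genuine. I would use the hypothesis $\|Df\|_{\operatorname{lip}}\le L$ precisely as the condition that rules out counterexamples like the one in that remark. In fact I would read it as the global statement that $f$ is $L$-Lipschitz, which is the way Theorem~\ref{thmdifflie} already uses it.

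\textbf{Step 2: group structure.} The inequality $|\varphi(x)-\varphi(y)|\ge(1-L)|x-y|$ becomes the ultrametric equality $|\varphi(x)-\varphi(y)|=|x-y|$, because $|f(x)-f(y)|<|x-y|$. So $\varphi$ is an isometry, and $\overline\varphi$ is injective on the finite set $\overline B_\epsilon$, hence a bijection. Compatibility with composition, $\overline{\phi\circ\varphi}=\overline\phi\circ\overline\varphi$, is immediate from the definition. Together with Theorem~\ref{thmdifflie}, which gives closure under composition and inverses, this shows that $\varphi\mapsto\overline\varphi$ is a group homomorphism from $\operatorname{Diff}^{\operatorname{an}}(B_\epsilon)$ into $\operatorname{Sym}(\overline B_\epsilon)$. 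Its image $\operatorname{Diff}^{\operatorname{an}}_m(\overline B_\epsilon)$ is therefore a subgroup of a finite group, so it has order dividing $|\overline B_\epsilon|!$.

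\textbf{Step 3: the case $m=1$.} We have $\varphi(x)-x=f(x)=f(x)-f(0)$, since $f(0)=0$. Hence $|f(x)|\le L|x|<1$ for $x\in B_\epsilon\subseteq\mathbb Z_p$, and if $x=0$ then $f(x)=0$ trivially. Because the value group is discrete, $|f(x)|<1$ forces $|f(x)|\le p^{-1}$, that is, $f(x)\equiv 0\bmod p$. Thus $\overline\varphi=\id$ for every $\varphi$, and $\operatorname{Diff}^{\operatorname{an}}_1(\overline B_\epsilon)=\{\id\}$.
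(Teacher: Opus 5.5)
Your proof is correct and follows the same route as the paper: reduction modulo $p^m$ is compatible with composition, inverses come from $\operatorname{Diff}^{\operatorname{an}}(B_\epsilon)$ being a group (Theorem~\ref{thmdifflie}), and finiteness comes from finiteness of $\overline B_\epsilon$. It is more complete than the paper's proof, which never checks that $\overline\varphi$ is well defined on $\overline B_\epsilon$ and never proves $\operatorname{Diff}_{1}^{\operatorname{an}}(\overline B_\epsilon)=\{\operatorname{id}\}$; both of your extra steps rest on reading $\|Df\|_{\operatorname{lip}}\le L$ as a global $L$-Lipschitz bound on $f$, which is how the paper itself uses the hypothesis, and that reading is genuinely needed, since your first suggestion that the bound follows from strict differentiability and $\|D_xf\|\le L$ alone is refuted by Remark~\ref{rem:lipmapnec}.
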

\begin{proof}
First we have $\overline{\varphi\circ\phi}\equiv\overline\varphi\circ\overline\phi$. For any $\overline\varphi\in\operatorname{Diff}_{m}^{\operatorname{an}}(\overline B_\epsilon)$, there exists a diffeomorphism   $\phi\in\operatorname{Diff}^{\operatorname{an}}( B_\epsilon)$ such that $\varphi\circ\phi=\operatorname{id}$ and 
$\varphi\circ\phi=\operatorname{id}$. Then we have $\overline\varphi\circ\overline\phi\equiv\operatorname{id}$ and 
$\overline\varphi\circ\overline\phi\equiv\operatorname{id}$ modular $p^m$, so $\overline\phi$ is indeed the inverse. In addition, the finiteness of $\operatorname{Diff}_{m}^{\operatorname{an}}(\overline B_\epsilon)$ is a consequence of the finiteness of $\overline B_\epsilon$.
\end{proof}
Therefore, $\operatorname{Diff}^{\operatorname{an}}(B_\epsilon)$ is isomorphic to the inverse limit of an inverse system of discrete finite groups, i.e., a profinite group.

\subsection{The $p$-adic logarithmic function and  $p^m$-th power map}
First recall that in $p$-adic analysis, the exponential function $\operatorname{exp}:R\to\CC_p$ is defined by the following power series 
\begin{equation}
\operatorname{exp}(x)=\sum_{n=0}^\infty \frac{x^n}{n!},   
\end{equation}
where the range $\mathbb C_p$ is the field of complex $p$-adic numbers, and the domain is $R:=\{x\in\mathbb C_p: |x|_p<p^{-1/(p-1)}\}$ in which the power series converges in $p$-adic sense. Its inverse logarithmic function $\operatorname{log}:D\to\CC_p$ also makes sense
\begin{equation}
\operatorname{log}(1+x)=\sum_{n=1}^\infty (-1)^{n-1}\frac{x^n}{n},    
\end{equation}
where the domain $D$ of $1+x$ is $\{1+x: |x|_p<1\}$.
This definition of exponential function is similar to the usual real/complex case, in which there is another nice way to formulate the exponential function by using infinite product
\begin{align*}
\operatorname{exp}(x)=\operatorname{lim}_{n\to\infty} \left(1+\frac{x}{n}\right)^n.    
\end{align*}
This formulation can be extended to $p$-adic functions, though in a slightly different setting.
\begin{prop}\label{thm:explim}
In complex $p$-adic field $\CC_p$, we have the following equality
\begin{equation}\label{eq:exp}
\operatorname{exp}(x)=\operatorname{lim}_{m\to\infty} \left(1+p^m x\right)^{1/p^m}.     
\end{equation}
\end{prop}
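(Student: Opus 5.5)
The plan is to give meaning to $(1+p^m x)^{1/p^m}$ as the $p$-adic binomial series, or equivalently as $\operatorname{exp}\bigl(p^{-m}\operatorname{log}(1+p^mx)\bigr)$, and then pass to the limit. Fix $x\in R$, so $|x|_p<p^{-1/(p-1)}$. Then $|p^mx|_p<1$ and $\operatorname{log}(1+p^mx)$ converges. For $m$ large, $1+p^mx$ lies in the region where $\operatorname{log}$ and $\operatorname{exp}$ are mutually inverse isometries (that region is $|y|_p<p^{-1/(p-1)}$). There we set
\begin{equation*}
(1+p^mx)^{1/p^m}:=\operatorname{exp}\Bigl(\tfrac{1}{p^m}\operatorname{log}(1+p^mx)\Bigr).
\end{equation*}
This is the unique $p^m$-th root of $1+p^mx$ lying in $\operatorname{exp}(R)$, and it agrees with the binomial series $\sum_k\binom{1/p^m}{k}(p^mx)^k$ wherever the latter converges. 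I would record this as a preliminary remark, since the statement leaves the choice of branch implicit.

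The key step is to expand
\begin{equation*}
\frac{1}{p^m}\operatorname{log}(1+p^mx)=x+\sum_{n\geq2}(-1)^{n-1}\frac{p^{m(n-1)}x^n}{n}
\end{equation*}
and show that the tail tends to $0$ as $m\to\infty$. The $n$-th term has absolute value $|p|_p^{m(n-1)}|x|_p^n/|n|_p\leq p^{-m(n-1)}\,n\,|x|_p^n$, using $1/|n|_p\leq n$. For $n\geq2$ and $m\geq1$ this is at most $p^{-m}\sup_{n\ge 2}\bigl(n\,p^{-(n-2)}|x|_p^{n}\bigr)$, and the supremum is finite because $|x|_p<1$. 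By the ultrametric inequality the whole tail is therefore $O(p^{-m})$. Hence $p^{-m}\operatorname{log}(1+p^mx)\to x$ in $R$, and in fact it lies in $R$ for all large $m$, since $|x|_p<p^{-1/(p-1)}$ and the perturbation is $O(p^{-m})$.

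To conclude, I would apply the continuity of $\operatorname{exp}$ on $R$, which holds because a convergent power series is continuous on its open disc of convergence. In fact $\operatorname{exp}$ is an isometry there: $|\operatorname{exp}(a)-\operatorname{exp}(b)|_p=|a-b|_p$, since $|(a-b)^n/n!|_p<|a-b|_p$ for $n\geq2$. This gives
\begin{equation*}
\bigl|(1+p^mx)^{1/p^m}-\operatorname{exp}(x)\bigr|_p=O(p^{-m})\to0,
\end{equation*}
which is \eqref{eq:exp}.

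The main obstacle is the well-definedness and branch issue rather than the estimate. In $\CC_p$, $1+p^mx$ has $p^m$ distinct $p^m$-th roots differing by $p^m$-th roots of unity, and those roots of unity are not close to $1$ (for instance $|\zeta_p-1|_p=p^{-1/(p-1)}$). So the limit only holds for the principal branch singled out above. I would therefore verify carefully the identity $\operatorname{exp}(p^{-m}\operatorname{log}(1+y))^{p^m}=1+y$ for $|y|_p<p^{-1/(p-1)}$, using $\operatorname{exp}(na)=\operatorname{exp}(a)^n$ and $\operatorname{exp}\circ\operatorname{log}=\id$ on that disc. The same identity shows that this root is the one obtained from the binomial series.
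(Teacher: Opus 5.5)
Your proof is correct and follows the same route as the paper. You expand $p^{-m}\operatorname{log}(1+p^mx)=x+\sum_{n\geq2}(-1)^{n-1}p^{m(n-1)}x^n/n$, show the tail tends to $0$, and pass through $\operatorname{exp}$; you also make explicit the branch $(1+p^mx)^{1/p^m}:=\operatorname{exp}(p^{-m}\operatorname{log}(1+p^mx))$ and the continuity (isometry) of $\operatorname{exp}$ on $R$, both of which the paper leaves implicit.

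One small precision: the identity $\operatorname{exp}(p^{-m}\operatorname{log}(1+y))^{p^m}=1+y$ requires $p^{-m}\operatorname{log}(1+y)\in R$, i.e.\ $|y|_p<p^{-m-1/(p-1)}$, not merely $|y|_p<p^{-1/(p-1)}$. This does hold for $y=p^mx$ with $x\in R$, since $|p^{-m}\operatorname{log}(1+p^mx)|_p=|x|_p$ for every $m$, so your argument is unaffected.
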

\begin{proof}
We use the $p$-adic logarithmic function and its power series
\begin{equation}\label{logtaylorexp}
\frac{1}{p^m}\operatorname{log}(1+p^mx)=x+\sum_{n=2}^\infty (-1)^{n-1}\frac{p^{m(n-1)}x^n}{n},    
\end{equation}
where $x\in R$. It is easy to see that this power series converges as for a fixed $x$ and any $1>\e>0$, the norm $|p^m x|_p\leq\e$ for $m$ sufficiently large, and then the decay rate of $\e^{n-1}$ is much faster than $|n|_p$. Note that the summation of a double sequence would not change  
under rearrangement; see e.g., Lemma~3.3 in~\cite{schneider2011}. So once we have the following estimate for $m\to\infty$, 
\begin{equation}
\left|\frac{1}{p^m}\operatorname{log}(1+p^mx)-x\right |_p=\left|\sum_{n=2}^\infty (-1)^{n-1}\frac{p^{m(n-1)}x^n}{n}\right |_p\to0,    
\end{equation}
we know that $\frac{1}{p^m}\operatorname{log}(1+p^mx)$ converges to $x$ as $m\to\infty$, which concludes the proof.
\end{proof}
This gives rise to another form of logarithmic function in $p$-adic setting.
\begin{cor}\label{cor:loglim}
The $p$-adic logarithmic function  can be written as a limit in $D$
\begin{equation}\label{eq:limlog}
\operatorname{log}(x)=\operatorname{lim}_{m\to\infty} \frac{1}{p^m}\left(x^{p^m} -1\right).     
\end{equation} 
\end{cor}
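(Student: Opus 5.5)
Write $x=1+y$ with $y\in\mathbb C_p$, $|y|_p<1$, so that $x\in D$. The plan is to turn the limit into a statement about the exponential of a small argument, in the spirit of Proposition~\ref{thm:explim}. Since $\log$ is a homomorphism on $D$, we have $\log(x^{p^m})=p^m\log(x)$. For $m$ large, $|p^m\log x|_p$ is smaller than $p^{-1/(p-1)}$, so $p^m\log x$ lies in $R$ and $x^{p^m}=\exp(p^m\log x)$. Here one also uses that $x^{p^m}\to 1$: each $p$-th power brings $1+y$ closer to $1$.

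Set $u_m:=p^m\log x$. Expanding the exponential gives
\begin{align*}
\frac{1}{p^m}\left(x^{p^m}-1\right)-\log x=\frac{1}{p^m}\sum_{n=2}^\infty\frac{u_m^n}{n!}=\log x\cdot\sum_{n=2}^\infty\frac{u_m^{n-1}}{n!}.
\end{align*}
It remains to show that the last sum tends to $0$ as $m\to\infty$. We use the standard bound $v_p(n!)\le (n-1)/(p-1)$, valid for $n\ge1$. It gives $|u_m^{n-1}/n!|_p\le (|u_m|_p\,p^{1/(p-1)})^{n-1}$. Since $|u_m|_p=p^{-m}|\log x|_p$, the quantity $\rho_m:=|u_m|_p\,p^{1/(p-1)}$ tends to $0$. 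Each term with $n\ge2$ is then bounded by $\rho_m$ once $\rho_m<1$. By the ultrametric inequality the whole sum is bounded by $\rho_m$, and hence
\begin{align*}
\left|\frac{1}{p^m}(x^{p^m}-1)-\log x\right|_p\le |\log x|_p\,\rho_m\to0.
\end{align*}
Rearranging the double series is justified exactly as in the proof of Proposition~\ref{thm:explim}, via Lemma~3.3 of~\cite{schneider2011}.

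The main obstacle is the first step, the identity $x^{p^m}=\exp(\log x^{p^m})$. The functions $\exp$ and $\log$ are mutually inverse only on the disc $R$ and its image $1+R$. For $|y|_p$ close to $1$, $x$ itself may lie outside $1+R$, and $\log$ has a kernel consisting of the roots of unity of $p$-power order. To handle this, I will first raise to a fixed power $p^{k}$ with $k$ large enough that $x^{p^k}\in 1+R$. This is possible because $|(1+y)^p-1|_p\le\max(|p y|_p,|y|_p^p)$, and iterating this bound forces the distance to $1$ down. On $1+R$ the two functions are inverse bijections, and $\log$ commutes with the power maps. The limit in $m$ is a tail statement, so this shift by $k$ is harmless: for $m\ge k$ we apply the argument above to $x^{p^k}$ with exponent $p^{m-k}$ and use $p^{-m}=p^{-k}p^{-(m-k)}$. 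Alternatively, one could work directly with the binomial expansion $\frac1{p^m}\sum_{j\ge1}\binom{p^m}{j}y^j$ and compare it coefficientwise with $\sum(-1)^{j-1}y^j/j$. However, controlling $\frac{1}{p^m}\binom{p^m}{j}-\frac{(-1)^{j-1}}{j}$ uniformly in $j$ is messier, so I would use the exponential route and keep this only as a fallback.
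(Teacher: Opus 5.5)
Your argument is correct, but it is organized differently from the paper's proof. The paper first expands $\frac{1}{p^m}(x^{p^m}-1)$ binomially as in formula~\ref{eq:logexpansion}, using Legendre's formula to get $\bigl|\binom{p^m}{k}/p^m\bigr|_p=|k|_p^{-1}$. It then identifies the limit by borrowing the exponential limit of Proposition~\ref{thm:explim}: with $y_m=(1+p^mx)^{1/p^m}\to y=\exp(x)$ one has $\frac{1}{p^m}(y_m^{p^m}-1)=x=\log y$, and $y_m$ is then replaced by $y$ inside the limit. You instead start from $\log x$ and push $x$ into $1+R$ by a fixed Frobenius power. You then write $x^{p^m}=\exp(p^m\log x)$ and reduce everything to a single estimate on $\exp(u)-1-u$ via $v_p(n!)\le (n-1)/(p-1)$.

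This buys you an explicit rate, $\bigl|\frac{1}{p^m}(x^{p^m}-1)-\log x\bigr|_p\le p^{1/(p-1)}|\log x|_p^2\,p^{-m}$ as soon as $x^{p^m}\in 1+R$. It also covers all of $D$, including points outside $1+R=\exp(R)$ such as the nontrivial $p$-power roots of unity, where $\log$ vanishes. The paper's identification step only reaches points of the form $\exp(x)$. Its passage from $y_m$ to $y$ also needs exactly an estimate of your type, namely $|\exp(p^mx)-1-p^mx|_p=o(p^{-m})$. Likewise, the Legendre bound by itself gives uniform boundedness of the binomial terms, not convergence in $m$. So the coefficientwise comparison you keep as a fallback is what that route would still need. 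What the paper's formulation offers is that the log-limit stays visibly dual to the exp-limit~\ref{eq:exp}, which is the viewpoint it uses for $\Phi_m\to\Phi_\infty$.

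Two minor simplifications:
\begin{itemize}
\item Since $1+R$ is a multiplicative group, every $x^{p^m}$ with $m\ge k$ already lies in $1+R$. So you can apply $x^{p^m}=\exp(p^m\log x)$ directly, without rescaling by $p^{-k}$.
\item The appeal to Lemma~3.3 of~\cite{schneider2011} is unnecessary. The only series you manipulate is $\sum_{n\ge2}u_m^n/n!$, and any rearrangement is already contained in the standard facts that $\log$ is a homomorphism on $D$ and $\exp\circ\log=\id$ on $1+R$.
\end{itemize}
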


\begin{proof}
First we show that the limit exists in this case.
Let $x\in D$, i.e., $x=1+z$, with $|z|_p<1$. Then the binomial expansion of $\frac{1}{p^m}\left(x^{p^m} -1\right)$ is given by
\begin{equation}\label{eq:logexpansion}
\frac{1}{p^m}\left(x^{p^m} -1\right)=z+\sum_{k=2}^{p^m}\binom{p^m}{k}\frac{z^k}{p^m}.   
\end{equation}
By using Legrendre's formula on the number of factors of $p$ in $k!$, we see that the number of factors of $1/p$ in $\binom{p^m}{k}/p^m$ is $v_p(k)$. Because for a fixed $|z|_p<1$, the decay rate of $|z|_p^k$ is much faster than $|k|_p$, the series~\ref{eq:logexpansion} converges as $k\to\infty$.

Next fix a point $x$ and let $y_m=\left(1+p^m x\right)^{1/p^m}$. The limit $y=\operatorname{lim}_{m\to\infty}y_m$ exists and equals $\operatorname{exp}(x)\neq0$ by formula~\ref{eq:exp}. Then we get
\begin{equation}
\operatorname{log}y=x=\frac{1}{p^m}\left(y_m^{p^m} -1\right)=\operatorname{lim}\frac{1}{p^m}\left(y_m^{p^m} -1\right)=\operatorname{lim}\frac{1}{p^m}\left(y^{p^m} -1\right).    
\end{equation}
\end{proof}

\begin{rem}
The formula~\ref{eq:limlog} is of interest in many aspects. On the one hand, it is one of the key ingredients of Teichm\"{u}ller theory on real line as shown in~\cite{Lu2025}. In this paper, the $r$-th root map given by  
\begin{equation}\label{eq:rrootreal}
\Phi_r: \begin{cases}
\left(\Diff_{-\infty}(\mathbb R), \dot W^{1,r}\right)  	&\to  \left(W^{\infty,1}(\mathbb R),L^r\right)\\ 
\varphi 						&\mapsto r\left(\varphi'^{1/r}-1\right)
\end{cases}
\end{equation}
induces a real-valued Schwarzian derivative (or $L^r$-Schwarzian derivative) and corresponding real Bers imbedding; here we use the notation $r$ to avoid possible misinterpretations with a prime number $p$. One may raise questions about why we change the exponent $1/r $ $(r\in[1,\infty])$ to the exponent $p^m$ in $p$-adic case. This is because
the discrete $p$-valuation on $p^m$ is in essence an inversion,
i.e., $|p^m|_p=p^{-m}$.

On the other hand, the polynomial map
\begin{equation}
\Phi_m:
\CC_p\to \CC_p \ \ \text{via}:
x \to						 \frac{1}{p^m}\left(x^{p^m}-1\right),
\end{equation}
and its limit for $m\to\infty$
\begin{equation}
\Phi_\infty:
D\to R \ \ \text{via}:
x \to						 \operatorname{log}(x)
\end{equation}
enlightens us of its strict relationship with Mochizuki's $p$-adic Teichm\"{u}ller theory; e.g.,~\cite{mochizuki96,mochizuki99}. Indeed, the original $r$-th root map arises from a homogeneous right invariant Sobolev metric $\dot W^{1,r}$ on a specific group of diffeomorphisms of real line $\operatorname{Diff}_{-\infty}(\RR)$, which is unexpectedly related to the projective structure on $\operatorname{Diff}_{-\infty}(\RR)$ or on $\operatorname{Prob}(M)$ for compact manifold case; see~\cite{Lu2025}. This coincides with the nature of projective structure with respect to an indigenous bundle over a  hyperbolic curve. In addition, $\Phi_m$ is essentially a composition of Frobenius morphisms on $p$-adic fields, and such actions are cornerstones for $p$-adic Teichm\"{u}ller theory. In his paper~\cite{mochizuki96}, Mochizuki further derives a logarithmic bundle and its connections from Frobenius morphisms, this is exactly what we do by pushing $\Phi_m$ to its limit, a logarithmic function. In consideration of all these above, it is likely that there are some intrinsic correspondences between our analytic approaches and Mochizuki's algebraic ones. We will discuss this analogy in the following sections.
\end{rem} 

In $D_a:=\{a+z: |z|_p<1\}$ for $a\in\{1,\cdots,p-1\}$, we introduce a generalization of  logarithmic function using the limit~\ref{eq:limlog}.
\begin{defi}
The generalized logarithmic function in $D_a:=\{a+z: |z|_p<1\}$
is defined by 
\begin{equation}\label{eq:genlog}
\widetilde{\operatorname{log}}(x):= \operatorname{lim}_{m\to\infty}\frac{1}{p^m}\left(x^{p^m} -a^{p^m}\right).    
\end{equation}   
\end{defi}
\begin{rem}
Note that this definition makes sense, as in  positive real line we have for any $x,a>0$,
\begin{align*}
\operatorname{log}(x)-\operatorname{log}(a)= \operatorname{lim}_{r\to\infty}r\left(x^{1/r} -a^{1/r}\right).    
\end{align*}
\end{rem}
One has a nice characterization of the generalized logarithmic function using Teichm\"{u}ller character $\omega$. 

\begin{prop}
$\widetilde{\operatorname{log}}(x)=\omega(\tilde a)\operatorname{log}(x/a)$.      
\end{prop}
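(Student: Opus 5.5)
The plan is to reduce the generalized logarithm to the ordinary one in Corollary~\ref{cor:loglim} by factoring out $a$. Fix $x\in D_a$, so $x=a+z$ with $|z|_p<1$. Since $a\in\{1,\dots,p-1\}$, $a$ is a $p$-adic unit, $|a|_p=1$. Set $u:=x/a=1+z/a$. Then $|z/a|_p=|z|_p<1$, so $u\in D$. For every $m$ we have the exact identity
\begin{equation*}
\frac{1}{p^m}\left(x^{p^m}-a^{p^m}\right)=a^{p^m}\cdot\frac{1}{p^m}\left(u^{p^m}-1\right).
\end{equation*}
The proposition will follow once we show the two factors converge separately and pass to the limit in the product.

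\textbf{Second factor.} By Corollary~\ref{cor:loglim} applied to $u\in D$, $\frac{1}{p^m}(u^{p^m}-1)\to\operatorname{log}(u)=\operatorname{log}(x/a)$.

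\textbf{First factor.} We show that $a^{p^m}$ converges to the Teichm\"{u}ller representative $\omega(\tilde a)$ of the residue class $\tilde a=a \bmod p$. The key step is the congruence: if $b\equiv c \bmod p^{k}$ with $k\geq1$, then $b^p\equiv c^p \bmod p^{k+1}$. This follows by writing $b=c+p^k t$ and expanding binomially; every middle term carries a factor $p\cdot p^{k}$. Fermat's little theorem gives $a^p\equiv a\bmod p$. Induction then yields $a^{p^{m+1}}\equiv a^{p^m}\bmod p^{m+1}$, so $(a^{p^m})_m$ is Cauchy in $\mathbb Z_p$. Its limit $\omega$ satisfies $\omega^p=\omega$, and $\omega\equiv a \bmod p$. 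Hence $\omega$ is a $(p-1)$-st root of unity lifting $\tilde a$, namely $\omega(\tilde a)$; uniqueness of such a lift comes from Hensel's lemma applied to $X^{p-1}-1$.

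\textbf{Passing to the limit.} Multiplication is continuous in $\mathbb C_p$, and both sequences converge (the first even has $|a^{p^m}|_p=1$ for all $m$). Therefore the limit in~\eqref{eq:genlog} exists and equals $\omega(\tilde a)\operatorname{log}(x/a)$, as claimed. The only step needing care is the first factor, i.e. identifying $\lim a^{p^m}$ with the Teichm\"{u}ller character. That rests entirely on the congruence lifting $b\equiv c\bmod p^k\Rightarrow b^p\equiv c^p\bmod p^{k+1}$, which I expect to be routine. Everything else is a direct application of Corollary~\ref{cor:loglim}.
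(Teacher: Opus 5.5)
Your proof is correct and follows the paper's route. The paper's one-line proof (apply Proposition~\ref{teichmullerchar} to equation~\ref{eq:genlog}) tacitly relies on exactly your factorization $\frac{1}{p^m}(x^{p^m}-a^{p^m})=a^{p^m}\cdot\frac{1}{p^m}\bigl((x/a)^{p^m}-1\bigr)$ together with Corollary~\ref{cor:loglim}; the only difference is that you reprove $a^{p^m}\to\omega(\tilde a)$ via the congruence $b\equiv c \bmod p^k\Rightarrow b^p\equiv c^p \bmod p^{k+1}$ instead of citing Proposition~\ref{teichmullerchar}, which is immaterial.
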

\begin{proof}
The proof is simply to apply Proposition~\ref{teichmullerchar} to equation~\ref{eq:genlog}.    
\end{proof}

This generalization can be extended in two directions. An arbitrary $x\in\mathbb Q_p^\times$ can be written uniquely as $x=p^sag$, for $s\in\ZZ$ and $g\in 1+p\ZZ_p$. We define the generalized logarithmic function
in $\mathbb Q_p^\times$ by
\begin{equation}\label{genlogqp}
\widetilde{\operatorname{log}}_1(x):=s\operatorname{log}(p)+\omega(\tilde a)\operatorname{log}(g),    
\end{equation}
where $s\operatorname{log}p$ is only formal and represents one disjoint component for each $s$.

Furthermore, the group of characters $\operatorname{Hom}(\mathbb F_p^\times,\ZZ_p^\times)$, a cyclic group of order $p-1$, is generated by Teichm\"{u}ller character $\omega$. Therefore, any character $\chi:F_p^\times\to \ZZ_p^\times$ can be written as $\chi=\omega^l$ for some $l\in \{0,1,\cdots,p-1\}$.
Then we define a family of generalized logarithmic functions
\begin{equation}\label{genlogqpcharacter}
\widetilde{\operatorname{log}}_l(x):=s\operatorname{log}(p)+\omega^l(\tilde a)\operatorname{log}(g).    
\end{equation}

\begin{rem}
The defining equation~\ref{genlogqp} reveals the underlying polar coordinate of $\mathbb Q_p^\times$, which is clearly related to the \'etale picture in Mochizuki's sense. In real field $\RR$, we have $z=r\operatorname{exp}(\sqrt{-1}\cdot\theta)$, and $\operatorname{log}z=\operatorname{log}r+\sqrt{-1}\cdot\theta$ for $\theta\in[0,2\pi]$.   Now the radius $\operatorname{log}r$ is replaced by $s\operatorname{log}(p)$, and the angle $i\theta$ is replaced by $\omega(\tilde a)\operatorname{log}(g)$. Note that $\omega(\tilde a)$ inherits the symmetry from $\mathbb F_p^\times$ and thus is a sort of "rotation".
\end{rem}

\subsection{The differential structure of $\operatorname{Diff}^{\operatorname{an}}(B_\epsilon)$}
From now on throughout this section, we will focus on extending the results of Teichm\"{u}ller theory on  real line discussed in~\cite{Lu2025} to that on a $p$-adic field. First we aim to find an isometry mapping $\operatorname{Diff}^{\operatorname{an}}(B_\epsilon)$
endowed with a norm with respect to a projective structure to a space of functions endowed with a $L^r$-norm with respect to an affine structure. 

We define a variation of $\varphi\in\operatorname{Diff}^{\operatorname{an}}(B_\epsilon)$ as 
\begin{equation}
\varphi^t:\mathbb Q_p\to B_\epsilon \ \ \text{via}:  t\to\varphi(t,x),  
\end{equation}
such that the following conditions are satisfied:

\begin{itemize}
\item[1.] $\varphi^t\in\operatorname{Diff}^{\operatorname{an}}(B_\epsilon)$ for every $t$ in a small neighborhood of $0$ in $\mathbb Q_p$.
\item[2.]
$\varphi^t$ is locally analytic with respect to $t$ at the point $t=0$.
\end{itemize}
Note that the second condition guarantees the local analyticity of the differential along the direction $t$. We should also note that the variable $t$ cannot reside in $\RR$ or $\CC$, for otherwise $\varphi^t$ would never be continuous with respect to $t$. Therefore, the functions used here are significantly different from the ones used in $p$-adic Harmonic analysis, where most of the functions  considered are locally constant test functions and distributions $f:\mathbb Q_p\to \CC$, in which case $t$ can be taken in $\RR$ or $\CC$; see e.g.,~\cite{Albeverio2010}.

A straightforward calculation of variations for $\varphi^t$ around the identity leads to a simple characterization of the Lie algebra.
\begin{prop}
The Lie algebra of $\operatorname{Diff}^{\operatorname{an}}(B_\epsilon)$ is given by
\begin{equation}
\mathfrak g^{\operatorname{an}}=\left\{u: u\in C^{\operatorname{an}}(B_\epsilon)\text{ and }  u(0)=0 \right\}.
\end{equation}    
\end{prop}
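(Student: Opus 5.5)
The plan is to prove the two inclusions between the tangent space at the identity, defined via variations $\varphi^t$ with $\varphi^0=\id$, and the space $\mathfrak g^{\operatorname{an}}$.

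\emph{Tangent vectors lie in $\mathfrak g^{\operatorname{an}}$.} Take a variation $\varphi^t=\id+f^t$ with $\varphi^0=\id$, so $f^0=0$. Condition~2 (local analyticity in $t$ at $t=0$) lets us expand
\begin{align*}
f^t(x)=\sum_{k\geq1}t^k a_k(x)
\end{align*}
for $t$ small. Set $u:=\partial_t|_{t=0}\varphi^t=a_1$. Each $f^t$ vanishes at $0$, so $a_k(0)=0$ for all $k$, and in particular $u(0)=0$. For local analyticity of $u$ I would treat $\varphi(t,x)$ as jointly locally analytic near $\{0\}\times B_\epsilon$; this is the intended reading of conditions~1 and~2. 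Then $u$ is a coefficient of a convergent two-variable power series on each small polydisc, hence lies in $C^{\operatorname{an}}(B_\epsilon)$. The limit defining $u$ exists pointwise, and by the ultrametric inequality uniformly on compact pieces.

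\emph{Every element of $\mathfrak g^{\operatorname{an}}$ is a tangent vector.} Given $u\in C^{\operatorname{an}}(B_\epsilon)$ with $u(0)=0$, consider the straight-line variation $\varphi^t:=\id+tu$. It is analytic in $t$, and $\varphi^t(0)=0$. It remains to check $\|D(tu)\|_{\operatorname{lip}}=|t|_p\,\|Du\|_{\operatorname{lip}}\leq L<1$ for $|t|_p$ small, after which Theorem~\ref{thmdifflie} gives $\varphi^t\in\operatorname{Diff}^{\operatorname{an}}(B_\epsilon)$. Since $\partial_t\varphi^t|_{t=0}=u$, this shows $u$ is tangent.

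\emph{Main obstacle.} The hard step is guaranteeing that $\|Du\|_{\operatorname{lip}}<\infty$. A locally analytic function on the compact ball $B_\epsilon$ is given by finitely many convergent power series on a finite disjoint cover by small balls. On each piece the derivative is Lipschitz with a bound controlled by the coefficients, via the estimate $|a_n|r^{n}\to0$. Across different pieces the ultrametric gives $|v_1-v_2|\geq$ the minimal radius, so the difference quotient is bounded by $2\sup|Du|$ divided by that radius. Compactness then gives a uniform finite Lipschitz constant.

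If this step fails under the paper's literal definition of $\|\cdot\|_{\operatorname{lip}}$, which takes a supremum over $x$ and over $v_1,v_2$ and for linear $D_xf$ reduces to $\sup_x\|D_xf\|$, the fallback is to use that bound directly. Boundedness of $Du$ on compact $B_\epsilon$ then suffices, and scaling by small $t$ finishes the argument.
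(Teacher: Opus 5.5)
Your two-inclusion structure is the paper's, and so is the straight-line variation $\id+tu$ for the reverse inclusion. The forward inclusion, however, is argued differently. The paper does not expand in $t$. It asserts, citing compactness, a uniform estimate $|f^t-tu|\leq\e|t|$ on $B_\epsilon$ and uses it to transfer strict differentiability from the $f^t$ to $u$. That argument is regularity-agnostic (the same kind of argument serves the subsequent remark on $\mathfrak g^1$ for $\operatorname{Diff}^1(B_\epsilon)$), but it only yields $u\in C^1(B_\epsilon)$ with $u(0)=0$; local analyticity of $u$ is never actually shown. Your route reads $u=a_1$ off a two-variable power series, so it proves $u\in C^{\operatorname{an}}(B_\epsilon)$ as the statement claims. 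The cost is assuming joint analyticity in $(t,x)$, or analyticity of $t\mapsto\varphi^t$ as a map into $C^{\operatorname{an}}(B_\epsilon)$. That is a fair trade, because the paper's uniformity claim does not follow from pointwise analyticity in $t$ plus compactness either. You are making explicit a hypothesis the paper uses tacitly.

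In the reverse inclusion the paper checks only the pointwise bound $\|tD_{x_0}u\|\leq L$, which by its own Remark~\ref{rem:lipmapnec} is not enough, so your finite-cover Lipschitz estimate is a real improvement. Direct it, though, at the condition the proof of Theorem~\ref{thmdifflie} actually uses, namely $|f(v_1)-f(v_2)|\leq L|v_1-v_2|$ for $f=tu$. Neither of your two readings suffices for injectivity: for $f=-x^p$ on $\ZZ_p$, the Lipschitz constant of $x\mapsto D_xf$ and $\sup_x\|D_xf\|$ are both $p^{-1}<1$, yet $\id+f$ sends $0$ and $1$ to the same point. Your argument adapts verbatim. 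On a piece $B(c,r)$ with $u=\sum_n a_n(x-c)^n$ one has $|u(x)-u(y)|\leq|x-y|\sup_{n\geq1}|a_n|r^{n-1}$. For $x,y$ in distinct pieces of the disjoint cover, $|u(x)-u(y)|\leq\sup|u|\leq(\sup|u|/\delta)|x-y|$, with $\delta$ the minimal radius. Then shrink $|t|$.
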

\begin{proof}
Denote by $\widetilde D$ (resp. $D$) the differential operator with respect to $t$ (resp. $x\in B_\epsilon$). The compactness of $B_\epsilon$ implies that for an arbitrarily small $\e>0$, there exists a neighborhood $U_0$ of $t=0$ such that uniformly for any $x\in B_\epsilon$, 
\begin{align*}
|f^t-\widetilde D_0\varphi(t)|=|\varphi^t-\varphi^0-\widetilde D_0\varphi(t)|\leq \e|t|,    
\end{align*}
where $f^t\in C^{\operatorname{an}}(B_\epsilon)$, because
$\varphi^t\in\operatorname{Diff}^{\operatorname{an}}(B_\epsilon)$. Consequently, $\widetilde D_0\varphi$ is strictly differentiable at any $x_0$. This comes from the following calculations: for any fixed $t\in U_0$ and an arbitrarily small $0<\e'$, there exists a neighborhood $V_0$ of $x_0$ such that for any $x,y$ in $V_0$,
\begin{align*}
|f^t(x)-f^t(y)-D_{x_0}f^t(x-y)|\leq\e'|x-y|.    
\end{align*}
So we have
\begin{align*}
&|(\widetilde D_0\varphi^t)_{x}-(\widetilde D_0\varphi^t)_{y}-D_{x_0}f^t(x-y)|
\\&\leq\operatorname{max} \left(|(\widetilde D_0\varphi^t)_{x}-f^t(x)|, |(\widetilde D_0\varphi^t)_{y}-f^t(y)|,  |f^t(x)-f^t(y)-D_{x_0}f^t(x-y)|\right)
\\&\leq \operatorname{max} \left(\e'|x-y|, \e|t|\right),
\end{align*}
where $|t|>0$ can be taken arbitrarily small.

In addition, the norm of differential operator $\widetilde D_0\varphi$ vanishes at $x=0$, i.e.,  $\widetilde D_0\varphi(0)=0$ since $f^t(0)=0$. Finally, we show that an arbitrary tangent $u\in C^{\operatorname{an}}(B_\epsilon)$ with $u(0)=0$ would reside in $\mathfrak g^{\operatorname{an}}$. Consider a variation 
$\varphi^t(x)=\operatorname{id}(x)+tu(x)$ for some $t\in \mathbb Q_p$,  its differential with respect to $x$ is given by
$D_{x_0}\varphi=1+tD_{x_0} u$. It is clear that $||tD_{x_0} u||\leq L<1$ if the norm of $t$ is taken to be sufficiently small.
\end{proof}
\begin{rem}
In this proposition, we require the perturbation $f$ in $\operatorname{Diff}^{\operatorname{an}}(B_\epsilon)$ and the variation $u\in g^{\operatorname{an}}$ to be locally analytic. But this requirement for $p$-adic Lie group is superfluous. In fact, we can show that the "Lie algebra" of $\operatorname{Diff}^{1}(B_\epsilon)$
is given by 
\begin{equation}
\mathfrak g^{1}=\left\{u: u\in C^{1}(B_\epsilon)\text{ and }  u(0)=0 \right\}.   
\end{equation}
The situation is similar for "smoothness" in real/complex geometry. Recall that in \cite[Remark~3.1.3]{Lu2025} for real line case, the analysis on Sobolev space $W^{\infty,1}(\RR)$ can be extended to lower regularity, e.g., the space of biLipschitz homeomorphisms $\biLip^{1,1}_{-\infty}(\RR)$.   
\end{rem}
Now we can define the right invariant Finsler metric on $\operatorname{Diff}^{\operatorname{an}}(B_\epsilon)$.
To this end, we write any tangent vector $h\in T_\varphi \operatorname{Diff}^{\operatorname{an}}(B_\epsilon)$ as $X\circ\varphi$ with 
$X\in \mathfrak g^{\operatorname{an}}$.
This allows us to 
define the right-invariant Finsler metric on 
$\operatorname{Diff}^{\operatorname{an}}(B_\epsilon)$ via
\begin{align*}
F_{r,\varphi}(h)&=\left(\int_{B_\epsilon} ||D_x(h\circ\varphi^{-1})||^r |dx|\right)^{1/r}=
 \left(\int_{B_\epsilon} ||D_xX||^r |dx|\right)^{1/r}\\&=\left(\int_{B_\epsilon} ||\frac{D_xh}{D_x\varphi}\circ\varphi^{-1}||^r |dx|\right)^{1/r}=\left(\int_{B_\epsilon} ||D_xh||^r |dx|\right)^{1/r},
\end{align*}
where $|dx|$ is the normalized Haar measure, and the last equality holds by using  the fact $||D_x\varphi||=||1+D_xf||=1$ and change of variable formula~\ref{changevariable}.
\begin{rem}
Compared to the case of real line, the right invariant Finsler metric $F_r$ is much simpler and not of much interest, since the integrand of the defining equation should be some locally constant $p$-adic norm. Consequently, it would be meaningless for us to calculate variations of energy functional defined by $F_r$, e.g., the Euler-Lagrangian.    
\end{rem}
In spite of this limitation in $p$-adic setting,  the differential structure on $\operatorname{Diff}^{\operatorname{an}}(B_\epsilon)$ further allows us to find the isometry between $\operatorname{Diff}^{\operatorname{an}}(B_\epsilon)$
and a function space.
\begin{thm}\label{thm:isometry:m}
For $r\in [1,\infty]$ and $m\in\NN^+$, the mapping
\begin{equation}
\Phi_m: \begin{cases}
\left(\operatorname{Diff}^{\operatorname{an}}(B_\epsilon),  F_r\right)  	&\to  \left(C^{\operatorname{an}}(B_\epsilon),L^r\right)\\ 
\varphi 						&\mapsto \frac{1}{p^m}\left((D_x\varphi)^{p^m}-1\right)
\end{cases}
\end{equation}
is an isometric embedding onto the image $\mathcal U=\Phi_m(\operatorname{Diff}^{\operatorname{an}}(B_\epsilon))$. 
The inverse of $\Phi_m$ is given by
\begin{equation}\label{eq:diffiso:inversem}
\Phi_m^{-1}: \begin{cases}
\mathcal U  &\to  \operatorname{Diff}^{\operatorname{an}}(B_\epsilon) \\ g &\mapsto x+D_x^{-1}\left( \left(p^m g(x)+1\right)^{1/p^m}-1\right),
\end{cases}
\end{equation}
where $D_x^{-1}$ is the inverse of $D_x$ converting the power series to its preimage of differential.
 \end{thm}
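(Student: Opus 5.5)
We would prove the statement in three steps: well-definedness, the inversion formula, and the isometry.

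\emph{Well-definedness.} Write $\varphi=\id+f$ with $f\in C^{\operatorname{an}}(B_\epsilon)$, $f(0)=0$ and $\|Df\|_{\operatorname{lip}}\le L<1$. Then $D_x\varphi=1+D_xf$. By the remark after the definition of strict differentiability, $|D_xf|\le L<1$, so $D_x\varphi$ lies in the domain $D=\{1+z:|z|_p<1\}$ pointwise. The binomial expansion~\ref{eq:logexpansion} gives
\begin{align*}
\frac{1}{p^m}\left((D_x\varphi)^{p^m}-1\right)=D_xf+\sum_{k=2}^{p^m}\binom{p^m}{k}\frac{(D_xf)^k}{p^m}.
\end{align*}
This is a polynomial in the locally analytic function $D_xf$, hence itself locally analytic. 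So $\Phi_m(\varphi)\in C^{\operatorname{an}}(B_\epsilon)$, and $\Phi_m$ is well defined.

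\emph{Inversion and injectivity.} Given $g=\Phi_m(\varphi)$, we have $p^mg+1=(D_x\varphi)^{p^m}$. On $1+p\ZZ_p$ (more generally on $D$, after choosing the branch in $D$) the map $y\mapsto y^{p^m}$ is injective. This follows from $\log$ and $\exp$ being mutually inverse on suitable discs, using Proposition~\ref{thm:explim} and Corollary~\ref{cor:loglim}; the root of unity ambiguity is removed by requiring the root to lie in $D$. Hence $(p^mg+1)^{1/p^m}=D_x\varphi$ is determined uniquely. Since $\varphi(0)=0$, the antiderivative $D_x^{-1}$ normalised to vanish at $0$ recovers $\varphi$, which is exactly formula~\eqref{eq:diffiso:inversem}.

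Here lies the main obstacle we expect. In the $p$-adic setting a function with zero derivative need only be locally constant, so $D_x^{-1}$ is not unique in general. We would handle this by interpreting $D_x^{-1}$ on power series, i.e.\ termwise integration of the expansion of $D_x\varphi-1=D_xf$ on $B_\epsilon$. Since $f$ is a power series vanishing at $0$, $f$ is determined by $D_xf$ on the ball. Injectivity of $\Phi_m$ then follows from the same computation.

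\emph{Isometry.} For $h=X\circ\varphi\in T_\varphi\operatorname{Diff}^{\operatorname{an}}(B_\epsilon)$, the differential of $\Phi_m$ is
\begin{align*}
d\Phi_m(h)=(D_x\varphi)^{p^m-1}D_xh.
\end{align*}
Because $|D_x\varphi|_p=|1+D_xf|_p=1$ by the ultrametric inequality, we get $\|d\Phi_m(h)\|=\|D_xh\|$ pointwise. Taking $L^r$ norms against $|dx|$ gives
\begin{align*}
\|d\Phi_m(h)\|_{L^r}=\left(\int_{B_\epsilon}\|D_xh\|^r|dx|\right)^{1/r}=F_{r,\varphi}(h),
\end{align*}
which is the last expression in the computation of $F_r$ above. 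The case $r=\infty$ is the corresponding sup-norm statement. Therefore $\Phi_m$ is an isometric embedding for the Finsler structure onto $\mathcal U$, with inverse as stated.
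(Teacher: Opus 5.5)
Your isometry argument is the paper's: differentiate to $(D_x\varphi)^{p^m-1}D_xh$, use $|D_x\varphi|_p=1$, and integrate. That part is fine. The gap is in the inversion/injectivity step, and it is the same one the paper hides behind ``by uniqueness of such a power series $f$''. Membership in $\operatorname{Diff}^{\operatorname{an}}(B_\epsilon)$ only requires $f\in C^{\operatorname{an}}(B_\epsilon)$, i.e.\ \emph{locally} analytic. So your step ``since $f$ is a power series vanishing at $0$, $f$ is determined by $D_xf$ on the ball'' does not follow from the hypotheses: $D_xf$ determines $f$ only up to a locally constant function. This actually breaks the statement. Take $a$ with $0<|a|_p<\epsilon$, $U=B_{|a|_p}(a)$ (so $U\subset B_\epsilon$ and $0\notin U$), $c$ with $|c|_p<|a|_p$, and $f=c\,\mathbf{1}_U$. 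Then $f$ is locally analytic, $f(0)=0$ and $Df\equiv 0$. Hence $\varphi=\id+f$ (translation by $c$ on $U$, identity elsewhere) is a bianalytic element of $\operatorname{Diff}^{\operatorname{an}}(B_\epsilon)$ different from $\id$, yet $\Phi_m(\varphi)=0=\Phi_m(\id)$.

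The root extraction has a second problem. The map $y\mapsto y^{p^m}$ is injective on $\{1+z:|z|_p<p^{-1/(p-1)}\}$, where $\exp$ and $\log$ are mutually inverse. Over $\mathbb Q_p$ with $p$ odd, the values of $D_x\varphi$ lie in $1+p\ZZ_p$, inside that disc, so your argument works there. But your parenthetical claim for $D$ is false: $D\subset\CC_p$ contains all $p$-power roots of unity $\zeta_{p^k}$, so ``choosing the root in $D$'' removes no ambiguity. For $p=2$ it fails already on $1+2\ZZ_2\ni -1$: $\varphi=-\id=\id+f$ with $f=-2\,\id$ has $\|Df\|_{\operatorname{lip}}=1/2<1$, and $\Phi_m(-\id)=0=\Phi_m(\id)$. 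So the theorem as stated is false and cannot be proved. What your argument (and the paper's) actually gives is the statement for $p$ odd and for $\varphi=\id+f$ with $f$ a single power series convergent on all of $B_\epsilon$. Without that restriction, you only get injectivity of $\Phi_m$ up to adding locally constant functions to $f$. In that restricted setting, your choice of branch plus normalised termwise integration is a correct, and more careful, version of the paper's inverse formula.
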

\begin{proof}
For any $\varphi=\operatorname{id}+f\in\operatorname{Diff}^{\operatorname{an}}(B_\epsilon)$, we have $f\in C^{\operatorname{an}}(B_\epsilon)$ and $||D_xf||<1$. We calculate the variation formula of the mapping $\Phi_m$
\begin{align*}
D_{\varphi,h}\Phi_m= (D_x\varphi)^{p^m-1} D_xh.    
\end{align*}
Using again the fact $||D_x\varphi||=1$, we have $||D_{\varphi,h}\Phi_m||_{L^r}=||h||_{F_r}$, which proves the isometric embedding of $\Phi_m$. It remains to show that $\Phi_m^{-1}$ is given by formula~\ref{eq:diffiso:inversem} and its image is in $\operatorname{Diff}^{\operatorname{an}}(B_\epsilon)$. For any  $\varphi=\Phi_m^{-1}(g)$, we easily see that $D_x\varphi=\left(p^m g(x)+1\right)^{1/p^m}$, where the right side can be written as a power series. Since the characteristic of a $p$-adic field is zero, $\varphi$ can be uniquely expressed as the form $\varphi=\operatorname{id}+f$, where $f$ is also a power series. But $g\in \mathcal U$ implies that there is already some $\varphi$ mapped to $g$. By uniqueness of such a power series $f$, the $f$ obtained by integration has to satisfy the limitations given by $\varphi\in\operatorname{Diff}^{\operatorname{an}}(B_\epsilon)$. 
\end{proof}

Combining Theorem~\ref{thm:explim} and Corollary~\ref{cor:loglim} with Theorem~\ref{thm:isometry:m},
we obtain a limit of  $\Phi_m$ for $m\to\infty$. 

\begin{thm}\label{thm:isometry:infty}
For $r\in [1,\infty]$ and $m\in\NN^+$, the mapping
\begin{equation}
\Phi_\infty: \begin{cases}
\left(\operatorname{Diff}^{\operatorname{an}}(B_\epsilon),  F_r\right)  	&\to  \left(C^{\operatorname{an}}(B_\epsilon),L^r\right)\\ 
\varphi 						&\mapsto \operatorname{log}(D_x\varphi)
\end{cases}
\end{equation}
is an isometric embedding onto the contractible image $\mathcal U=\Phi_\infty(\operatorname{Diff}^{\operatorname{an}}(B_\epsilon))$.
The inverse of $\Phi_\infty$ is given by
\begin{equation}\label{eq:diffiso:inversem}
\Phi_\infty^{-1}: \begin{cases}
\mathcal U  &\to  \operatorname{Diff}^{\operatorname{an}}(B_\epsilon) \\ g &\mapsto x+D_x^{-1}\left( \operatorname{exp}(g(x))-1\right),
\end{cases}
\end{equation}
where $D_x^{-1}$ is the inverse of $D_x$ converting the power series to its preimage of differential.
 \end{thm}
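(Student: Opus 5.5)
The plan is to obtain Theorem~\ref{thm:isometry:infty} by passing to the limit $m\to\infty$ in Theorem~\ref{thm:isometry:m}, pointwise in $x\in B_\epsilon$. For $\varphi=\id+f\in\Diff^{\operatorname{an}}(B_\epsilon)$ we have $\|D_xf\|\leq L<1$, so $D_x\varphi=1+D_xf$ lies in the domain $D=\{1+z:|z|_p<1\}$. Corollary~\ref{cor:loglim} then gives, for every $x$,
\begin{equation*}
\Phi_m(\varphi)(x)=\frac{1}{p^m}\left((D_x\varphi)^{p^m}-1\right)\longrightarrow \log(D_x\varphi).
\end{equation*}
Since $|D_xf|\leq L$ holds uniformly in $x$, the tail estimate from the proof of Corollary~\ref{cor:loglim} controls the error by a quantity depending only on $L$ and $m$. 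Hence the convergence is uniform on $B_\epsilon$. Moreover, $\log(1+D_xf)$ is a composition of a convergent power series with a locally analytic function, so $\Phi_\infty(\varphi)\in C^{\operatorname{an}}(B_\epsilon)$.

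For the isometry, I would compute the variation directly rather than take limits of the differentials $D\Phi_m$. Differentiating along a variation $\varphi^t$ with $\partial_t\varphi^t|_{t=0}=h$ gives $D_{\varphi,h}\Phi_\infty=D_xh/D_x\varphi$. This is the $m\to\infty$ limit of $(D_x\varphi)^{p^m-1}D_xh$, because $(D_x\varphi)^{p^m}\to 1$ on $D$. Using $\|D_x\varphi\|=|1+D_xf|_p=1$ (ultrametric inequality with $|D_xf|<1$), the integrand of the $L^r$ norm equals $\|D_xh\|^r$. This matches $F_{r,\varphi}(h)^r$ as computed via the change of variables formula, Proposition~\ref{changevariable}, and the case $r=\infty$ is the same pointwise identity. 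Injectivity follows from the inverse formula below: $\log(D_x\varphi)$ determines $D_x\varphi$, which together with $\varphi(0)=0$ determines $\varphi$.

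For the inverse, given $g=\log(D_x\varphi)\in\mathcal U$, I apply $\exp$. Since $|D_xf|_p\leq L$, we need $|\log(1+D_xf)|_p$ inside the convergence disc $R$ of $\exp$. Equivalently, Proposition~\ref{thm:explim} is the limit of $\left(p^mg+1\right)^{1/p^m}$, which mirrors $\Phi_m^{-1}$. Then $\exp(\log(D_x\varphi))=D_x\varphi$, and integrating with $D_x^{-1}$ normalized by $f(0)=0$ recovers $\varphi$. Here I reuse the uniqueness-of-antiderivative argument of Theorem~\ref{thm:isometry:m}, which relies on characteristic zero and the premise $g\in\mathcal U$. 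Contractibility of $\mathcal U$ comes from the scaling $s g$, $s\in\ZZ_p$, $|s|_p\leq 1$. If $g=\log(1+D_xf)$ is admissible, then $\exp(sg)=(1+D_xf)^s$ still has $|{(1+D_xf)^s-1}|\leq L$. The Lipschitz condition on $D\bigl((1+Df)^s\bigr)$ is preserved by the same ultrametric estimate, so $\mathcal U$ is star-shaped about $0$ under $\ZZ_p$-scaling, which is the contraction I would use.

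The main obstacle is the domain issue for $\exp\circ\log$. The $p$-adic exponential only converges for $|z|_p<p^{-1/(p-1)}$, and $\exp(\log(1+z))=1+z$ holds only on $|z|_p<p^{-1/(p-1)}$. So if $L\geq p^{-1/(p-1)}$, $\Phi_\infty$ may fail to be injective, since $\log$ kills roots of unity in $1+p\ZZ_p$ for $p=2$. I would first try to restrict to (or assume) $L<p^{-1/(p-1)}$, so that $\log$ is an isometric bijection from $1+L\text{-disc}$ onto the $L$-disc. If that is not permitted, I would instead define $\Phi_\infty^{-1}$ on $\mathcal U$ through the limit of $\Phi_m^{-1}$ and argue by uniqueness. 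Even then, some smallness of $L$ seems unavoidable for the stated inverse formula.
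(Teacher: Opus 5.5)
Your route is the paper's. The isometry comes from $D_{\varphi,h}\Phi_\infty=D_xh/D_x\varphi$ together with $|D_x\varphi|_p=1$, and the inverse comes from exponentiating and reusing the antiderivative-uniqueness step of Theorem~\ref{thm:isometry:m}. There are two small differences:
\begin{enumerate}
\item The paper defines $\Phi_\infty$ directly and needs only that $\log(1+z)$ converges for $|z|<1$. Your uniform-limit framing is therefore unnecessary, and the uniformity you cite is not actually contained in the proof of Corollary~\ref{cor:loglim}.
\item For contractibility the paper uses $t\mapsto\log(1+tD_xf)=\Phi_\infty(\id+tf)$. This stays in $\mathcal U$ automatically, since $\id+tf\in\operatorname{Diff}^{\operatorname{an}}(B_\epsilon)$ for $|t|_p\le 1$. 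Your scaling $sg$ instead needs, for every $s$, a preimage $\psi$ with $D_x\psi=(1+D_xf)^s$. That is an extra antiderivative construction which you only sketch.
\end{enumerate}

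The obstacle you flag is real, and the paper's proof does not address it. Over $\mathbb{Q}_p$ with $p$ odd it disappears on its own. There $|D_xf|_p<1$ forces $|D_xf|_p\le p^{-1}<p^{-1/(p-1)}$, and on that disc $\log$ and $\exp$ are inverse isometries, so no assumption on $L$ is needed. For $p=2$ it gives a counterexample to the statement. Take $\varphi=-\id$, so $f=-2x$ and $|D_xf|_2=\tfrac12$. This $\varphi$ satisfies all the defining conditions, yet $\Phi_\infty(\varphi)=\log(-1)=0=\Phi_\infty(\id)$, and the inverse formula returns $\id$. The same happens for $\zeta_p\,\id$ whenever $K$ contains a primitive $p$-th root of unity. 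In those cases your restriction $\sup_x|D_xf|_p<p^{-1/(p-1)}$ is genuinely necessary.

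There is a further gap, inherited from the paper. The step ``$D_x\varphi$ together with $\varphi(0)=0$ determines $\varphi$'' is false for locally analytic functions, and characteristic zero does not rescue it. Take a ball $U=a+p^k\ZZ_p\subset B_\epsilon$ with $0\notin U$, and put $f=p^k\mathbf 1_U$. Then:
\begin{itemize}
\item $f$ is locally constant, $f(0)=0$ and $D_xf\equiv 0$;
\item $f$ is even Lipschitz with constant $\le p^{-1}$;
\item $\varphi=\id+f$ (translation by $p^k$ on $U$, the identity elsewhere) is a bijection of $B_\epsilon$ with locally analytic inverse.
\end{itemize}
So $\varphi\in\operatorname{Diff}^{\operatorname{an}}(B_\epsilon)$ and $\Phi_\infty(\varphi)=0=\Phi_\infty(\id)$. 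For the same reason $F_r(\mathbf 1_U)=0$, so $F_r$ is only a seminorm. Hence $D_x^{-1}$ is defined only modulo locally constant functions, and neither injectivity nor the inverse formula follows.

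The uniqueness argument of Theorem~\ref{thm:isometry:m} is valid only when $f$ is a single power series convergent on all of $B_\epsilon$. In that case the identity theorem plus characteristic zero give a unique primitive vanishing at $0$. You must impose that restriction, or work modulo locally constant functions, to close the argument.
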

\begin{proof}
The main difference of the proof here from the former one is that the logarithmic function is a power series now, so we need to consider its convergence. 
For any $\varphi=\operatorname{id}+f\in\operatorname{Diff}^{\operatorname{an}}(B_\epsilon)$, we have $f\in C^{\operatorname{an}}(B_\epsilon)$ and $||D_xf||<1$. But we know that $\operatorname{log}(1+z)$ is convergent for $|z|<1$.
Furthermore, it is easy to see that $\operatorname{log}(1+tD_xf)\to 0$ as $|t|\to0$, so
the image $\mathcal U$
is contractible. Note that $\mathcal U$ may not include all of the locally analytic functions with $||D_xf||<1$. The counter-example is given by $\varphi(x)=x-x^p$, as shown in Remark~\ref{rem:lipmapnec}.

Next we calculate the variation formula of the mapping $\Phi$
\begin{align*}
D_{\varphi,h}\Phi_m=  D_xh/D_x\varphi.    
\end{align*}
Using again the fact $||D_x\varphi||=1$, we have $||D_{\varphi,h}\Phi_m||_{L^r}=||h||_{F_r}$, which proves the isometric embedding of $\Phi_m$. The remaining part of the proof follows almost the same steps in Theorem~\ref{thm:isometry:m}, so we omit it. 
\end{proof}

\begin{rem}
In some literature, the isometric embedding can be expressed as the following logarithmic differential $\Phi_\infty:\varphi\to\operatorname{log}(d\varphi)=\operatorname{log}(\varphi^*dz)$. Note that this is equivalent to our expression. More precisely, we have $\operatorname{log}(d\varphi)=\operatorname{log}(D_z\varphi)+\operatorname{log}(dz)$, which is only a difference generated by translation.      
\end{rem}
One interesting point of view on this limit $\Phi_m\to\Phi_\infty$ is that the multiplicative structure on the slope of diffeomorphism $D_x\varphi$ goes to the additive structure. To make it more clear, the equality
\begin{align*}
\frac{1}{p^m}(D_x(\varphi\circ\phi))^{p^m}=\frac{1}{p^m}(D_x\phi)^{p^m} \cdot (D_x\varphi)^{p^m}\circ\phi    
\end{align*}
after deducting a constant  becomes the following for $p$ goes to infinity
\begin{align*}
\operatorname{log}(D_x(\varphi\circ\phi))=\operatorname{log}(D_x\phi) +\operatorname{log} (D_x\varphi)\circ\phi.    
\end{align*}

\subsection{Schwarzian derivative and Frobenius approximation}\label{subsec:schwarzder}
From now on throughout the section, our analysis involves the differential of order higher than two, so we restrict ourselves to locally analytic functions and ignore the  differentiability or strict differentiability.

As indicated in~\cite[section~3.2]{Lu2025}, the Schwarzian derivative can be viewed as the diagonal elements of the mixed derivative
\begin{equation}
S\{\varphi\}(x):=   6D_yD_zV(y,z)|_{(y,z)=(x,x)}, 
\end{equation}
where $V$ serves as a sort of potential 
\begin{equation}
V(y,z)=\operatorname{log}\left(\frac{\varphi(y)-\varphi(z)}{y-z}\right),    
\end{equation}
for any $\varphi\in\operatorname{Diff}^{\operatorname{an}}(B_\epsilon)$. Then the restriction of $V$ in the diagonal is just the map $\Phi_\infty$ defined in Theorem~\ref{thm:isometry:infty}
\begin{equation}
u_\varphi(x):=\Phi_\infty(\varphi(x))=\operatorname{log}(D_x\varphi)=V(x,x),   
\end{equation}
which satisfies the following equality
\begin{equation}\label{eq:schwazexpression}
S\{\varphi\}(x)=D_x^2u_\varphi(x)-\frac{1}{2}(D_xu_\varphi(x))^2.    
\end{equation}

The limiting process from Theorem~\ref{thm:isometry:m} to Theorem~\ref{thm:isometry:infty} enlightens us of a possible approximation for Schwarzian derivative. Since $\Phi_m$ is the composition of Frobenius morphisms, we call it a Frobenius approximation.
\begin{defi}
The Frobenius potential $V_m$ is defined to be 
\begin{equation}
V_m(y,z):=\frac{1}{p^m}\left\{\left(\frac{\varphi(y)-\varphi(z)}{y-z}\right)^{p^m}-1\right\}.    
\end{equation}
Then the Frobenius Schwarzian derivative is  given by
\begin{equation}
S_{m}\{\varphi\}(x):=   6D_yD_zV_m(y,z)|_{(y,z)=(x,x)}.  
\end{equation}
\end{defi}
Similarly, the restriction of $V_m$ in the diagonal is just the map $\Phi_m$
\begin{equation}
V_m(x,x)=\Phi_m(\varphi).   
\end{equation}
It turns out that the pointwise limit of $S_m$ is just the Schwarzian $S$.
\begin{prop}
 The $L^p$-Schwarzian $S_{p}\{\varphi\}$ has the following expression
 \begin{equation}
 S_{m}\{\varphi\}(x)=\left(\frac{3p^m}{2}(\frac{D_x^2\varphi}{D_x\varphi})^2+S\{\varphi\}(x)\right)(D_x\varphi)^{p^m}.    
 \end{equation}
 Therefore, the limit of $S_m$ is the Schwarzian $S$ when $m\to\infty$.
\end{prop}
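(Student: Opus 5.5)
The plan is a direct computation of the mixed derivative at the diagonal by the chain rule, followed by a $p$-adic limit argument. Write $N:=p^m$, $\varphi'=D_x\varphi$, and set
\begin{equation*}
Q(y,z):=\frac{\varphi(y)-\varphi(z)}{y-z},\qquad V_m=\frac{1}{N}\left(Q^{N}-1\right).
\end{equation*}
Since $\varphi$ is locally analytic, $Q$ extends to a locally analytic function near the diagonal with $Q(x,x)=\varphi'(x)$. The composition rules (Proposition~\ref{strictdiff} and its analytic analogue) justify the differentiation. They give
\begin{equation*}
D_yD_zV_m=Q^{N-1}D_yD_zQ+(N-1)Q^{N-2}D_yQ\,D_zQ .
\end{equation*}
The $1/N$ in $V_m$ is absorbed here, and no division by $N$ survives.

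The key input is the diagonal Taylor data of the difference quotient. I will expand $\varphi$ around $x$ as a power series, $\varphi(x+s)=\sum_k a_k s^k$ with $a_1=\varphi'(x)$, $a_2=\varphi''(x)/2$, $a_3=\varphi'''(x)/6$. Then $Q(x+s,x+t)=\sum_k a_k\,(s^k-t^k)/(s-t)$, and reading off coefficients gives
\begin{equation*}
Q|_{\mathrm{diag}}=\varphi',\qquad D_yQ|_{\mathrm{diag}}=D_zQ|_{\mathrm{diag}}=\tfrac12\varphi'',\qquad D_yD_zQ|_{\mathrm{diag}}=\tfrac16\varphi'''.
\end{equation*}
In particular, the coefficient of $st$ in $(s^3-t^3)/(s-t)=s^2+st+t^2$ is $1$, which is what produces the last identity.

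Substituting these values and multiplying by $6$ gives
\begin{equation*}
S_m\{\varphi\}=(\varphi')^{N}\left(\frac{\varphi'''}{\varphi'}+\frac{3(N-1)}{2}\Big(\frac{\varphi''}{\varphi'}\Big)^2\right).
\end{equation*}
I then use that, with $u_\varphi=\log\varphi'$, formula~\eqref{eq:schwazexpression} becomes the classical expression $S\{\varphi\}=\varphi'''/\varphi'-\tfrac32(\varphi''/\varphi')^2$. Regrouping the bracket as $S\{\varphi\}+\tfrac{3N}{2}(\varphi''/\varphi')^2$ yields exactly the stated identity.

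For the limit $m\to\infty$ I argue pointwise in the $p$-adic topology. First, $|p^m|_p=p^{-m}\to0$, and $\varphi''/\varphi'$ is finite because $|\varphi'|=1$. Hence the term $\tfrac{3p^m}{2}(\varphi''/\varphi')^2\to0$; for $p=2$ the factor $\tfrac32$ is harmless since $p^m/2$ still tends to $0$. Second, $\varphi'=1+D_xf$ with $|D_xf|<1$, so $\varphi'\in D$. The binomial expansion used in Corollary~\ref{cor:loglim}, or the relation $(\varphi')^{p^m}=1+p^m\Phi_m(\varphi)$ with $\Phi_m(\varphi)\to\log\varphi'$ bounded, then gives $(\varphi')^{p^m}\to1$.

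Together these show $S_m\{\varphi\}(x)\to S\{\varphi\}(x)$. The only delicate step is the bookkeeping of the diagonal derivatives of $Q$; the power series expansion settles it cleanly, and everything else is routine.
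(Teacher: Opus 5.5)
Your proposal is correct and follows the same route as the paper: the paper defers the diagonal computation to the real-line case in \cite{Lu2025}, with the exponent $1/r$ replaced by $p^m$. You carry out that computation explicitly via the power-series expansion of the difference quotient. You then conclude, exactly as the paper does, using $(D_x\varphi)^{p^m}\to 1$ from the binomial expansion in formula~\ref{eq:logexpansion}. Your remark that $\tfrac{3p^m}{2}(D_x^2\varphi/D_x\varphi)^2\to 0$ because $|p^m|_p\to 0$ (including the case $p=2$) makes explicit a step the paper leaves implicit.
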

\begin{proof}
The calculation is carried out almost the same way as in~\cite[Proposition~3.3.1]{Lu2025}, while the only difference here is that the exponent $1/p$ changes to $p^m$. It remains to show that $S_m\to S$. Note that we have $(D_x\varphi)^{p^m}\to1$ by using formula~\ref{eq:logexpansion}.
\end{proof}

\subsection{The $p$-adic Bers embedding and Teichm\"{u}ller theory on $\operatorname{Diff}^{\operatorname{an}}(B_\epsilon)$}
To study complex structures on compact Riemann surfaces, Bers used the Schwarzian derivative to define a mapping $h=S\{\tilde \varphi\}$ which embeds the universal Teichm\"{u}ller space into an open subset $U$ of the space of bounded holomorphic functions $h$ on $\mathbb H$ (or $\mathbb D$) with $L^\infty$-norm. Henceforth, the Teichm\"{u}ller theory on real line can be viewed as a study on the behavior of Bers embedding on the boundary of upper half plane $\mathbb H$, i.e., a real line; see~\cite{Lu2025}. This interpretation further motivates us to replace the real line by a $p$-adic field, and consider the $p$-adic Bers embedding.

First we introduce the quotient relation of  diffeomorphism groups as shown in~\cite[section~3.4]{Lu2025}. The following group
$\widetilde\Diff^{\operatorname{an}}(\mathbb Q_p)$ is a larger diffeomorphism group that includes all the affine transformations,
\begin{align*}
\widetilde\Diff^{\operatorname{an}}(\mathbb Q_p)=\left\{\varphi=a\cdot\id+b+f: (\varphi-b)/a\in\Diff^{\operatorname{an}}(\mathbb Q_p), a\in\mathbb Q_p^{\times},  b\in \mathbb Q_p \right\}.    
\end{align*}
Then the $p$-adic group
$\Diff^{\operatorname{an}}(\mathbb Q_p)$ is the quotient of $\widetilde\Diff^{\operatorname{an}}(\mathbb Q_p)$ by the affine group of $\mathbb Q_p$, i.e.,
\begin{equation}\label{quotentgroup}
\Diff^{\operatorname{an}}(\mathbb Q_p)=\widetilde\Diff^{\operatorname{an}}(\mathbb Q_p)/\operatorname{A}( \mathbb Q_p),    
\end{equation}
where the real affine group $\operatorname{A}(\mathbb Q_p)=\{\varphi: \varphi(x)=ax+b,\; a\in\mathbb Q_p^{\times},  b\in \mathbb Q_p\}$ acts by right cosets.
Indeed, we have the following identity
\begin{align*}
a\cdot\id+b+f=(a\cdot\id+b)\circ(\id+f/a).    
\end{align*}
In addition, the space of all vertical shifts of $C^{\operatorname{an}}(\mathbb Q_p)$ is given by
\begin{equation}
C_+^{\operatorname{an}}(\mathbb Q_p)=  \left\{g+r\operatorname{log}p:g\in C^{\operatorname{an}}(\mathbb Q_p), \ r\in \ZZ\right\}\cong\bigsqcup_{r\in\ZZ}C^{\operatorname{an}}(\mathbb Q_p),  \end{equation}note that the word "vertical shift" comes from the real line case.
Then $\Phi_\infty$ can be extended to $\widetilde\Diff^{\operatorname{an}}(\mathbb Q_p)$, and is no longer an injective isometry, but rather a covering map. 
\begin{prop}\label{affinegroupiso}
The mapping
\begin{equation}
\widetilde\Phi_\infty: \begin{cases}
\left(\widetilde\Diff^{\operatorname{an}}(\mathbb Q_p), F_r\right)  	&\to  \left(C_+^{\operatorname{an}}(\mathbb Q_p),L^r\right)\\ 
\varphi 						&\mapsto \widetilde{\operatorname{log}}_l(D_x\varphi)
\end{cases}
\end{equation}
is a covering map onto its image with the fibre isomorphic to $\mathbb F_p^\times\times\mathbb Q_p$. %Furthermore, $\widetilde{\operatorname{\Phi}}_\infty$ is the projection of a trivial bundle.   
\end{prop}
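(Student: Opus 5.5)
The plan is to reduce the statement to Theorem~\ref{thm:isometry:infty} via the coset decomposition \eqref{quotentgroup}, writing every $\varphi\in\widetilde\Diff^{\operatorname{an}}(\mathbb Q_p)$ as $\varphi=(a\cdot\id+b)\circ(\id+f/a)$ with $\psi:=\id+f/a\in\Diff^{\operatorname{an}}(\mathbb Q_p)$. By the chain rule, $D_x\varphi=a\,D_x\psi$. Write $a=p^s a_0 g_0$, with $s\in\ZZ$, $a_0\in\{1,\dots,p-1\}$ and $g_0\in 1+p\ZZ_p$. Since $\|D_x\psi\|=1$, I will assume $D_x\psi\in 1+p\ZZ_p$, or reduce to this case; this is harmless because the Lipschitz condition $\|Df/a\|_{\operatorname{lip}}<1$ forces $D_x\psi\equiv 1$ modulo the maximal ideal. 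Then $D_x\varphi=p^s a_0\,(g_0 D_x\psi)$ is exactly the decomposition used in \eqref{genlogqpcharacter}. This gives
\begin{align*}
\widetilde\Phi_\infty(\varphi)=s\operatorname{log}p+\omega^l(\tilde a_0)\bigl(\operatorname{log}g_0+\operatorname{log}(D_x\psi)\bigr)=s\operatorname{log}p+\omega^l(\tilde a_0)\operatorname{log}g_0+\omega^l(\tilde a_0)\,\Phi_\infty(\psi),
\end{align*}
which lies in the $s$-th component of $C_+^{\operatorname{an}}(\mathbb Q_p)$.

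The key steps run as follows. First, I check that the value does not depend on $b$: the translation part is killed by $D_x$, so every $\varphi$ in the same right coset of translations has the same image. This accounts for a $\mathbb Q_p$-worth of fibre. Second, I analyze the dependence on $a$. The valuation $s$ is recorded by the component label, and $g_0$ contributes an additive constant inside the component. The residue $a_0\in\mathbb F_p^\times$ enters only through $\omega^l(\tilde a_0)$, and I expect to argue that this twist can be absorbed by reparametrizing $\psi$, so that $a_0$ yields a discrete $\mathbb F_p^\times$-worth of preimages. Third, for fixed $(s,a_0)$, Theorem~\ref{thm:isometry:infty} gives the injectivity of $\psi\mapsto\Phi_\infty(\psi)$ with the explicit inverse $x+D_x^{-1}(\exp(g)-1)$. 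This identifies the fibre over any image point with $\mathbb F_p^\times\times\mathbb Q_p$.

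For the covering property I will show local triviality directly. Fix a point in the image with data $(s,a_0)$. On the open set of $C_+^{\operatorname{an}}$ formed by the same component and a small $L^r$-neighbourhood, the formula
\begin{align*}
(g,\zeta,b)\mapsto (p^s\zeta\cdot\id+b)\circ\Phi_\infty^{-1}\bigl(\omega^l(\tilde\zeta)^{-1}(g-s\operatorname{log}p)\bigr)
\end{align*}
gives a local section-product map, and it is a homeomorphism onto the preimage. Its continuity follows from the continuity of $\exp$ and $D_x^{-1}$ on power series. The isometric nature on each sheet follows from the computation $D_{\varphi,h}\widetilde\Phi_\infty=\omega^l(\tilde a_0)D_xh/D_x\varphi$, together with $|\omega^l(\tilde a_0)|_p=1$, exactly as in Theorem~\ref{thm:isometry:infty}.

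The main obstacle is the second step. I need to pin down how $g_0$ and $a_0$ interact with the image, namely whether the constant $\omega^l(\tilde a_0)\operatorname{log}g_0$ can be distinguished from a genuine change of $\psi$. This is necessary to make the fibre exactly $\mathbb F_p^\times\times\mathbb Q_p$ rather than something larger involving $1+p\ZZ_p$. My first attempt would be to normalize $\psi$ by requiring $D_0\psi=1$, which is the natural normalization in the quotient \eqref{quotentgroup}. Under that normalization the constant term of $\operatorname{log}(D_x\varphi)$ at $x=0$ determines $g_0$ uniquely, leaving only the discrete residue $a_0$ and the translation $b$ as the ambiguity.
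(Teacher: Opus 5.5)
Your outline is the paper's own argument. The paper also writes $D_x\varphi=p^{s}u\,g$ with $s=\operatorname{ord}(a)$ independent of $x$, and reads the fibre off the equivalence $\widetilde{\operatorname{log}}_l(D_x\varphi_1)=\widetilde{\operatorname{log}}_l(D_x\varphi_2)\Leftrightarrow s_1=s_2,\ \omega^l(\tilde u_1\tilde u_2^{-1})\operatorname{log}g_1=\operatorname{log}g_2$, leaving $u$ and $b$ free. Your explicit local section goes beyond the paper, which never addresses local triviality.

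The issue you call the main obstacle is not one. Replacing $(g_0,\psi)$ by $(g_0',(g_0/g_0')\psi)$ gives the same $\varphi$. Moreover $(g_0/g_0')\psi\in\Diff^{\operatorname{an}}$ because $|g_0/g_0'-1|_p<1$. So this is only the non-uniqueness of writing $\varphi=a\cdot\id+b+f$. The paper avoids it by never splitting $g$ as $g_0D_x\psi$, and either of your normalizations removes it.

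The real gaps are two steps you take for granted, which the paper's proof also passes over.

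\textbf{First gap: every residue must actually occur.} Your section only lands in $\widetilde\Diff^{\operatorname{an}}(\mathbb Q_p)$ if $\omega^l(\tilde\zeta)^{-1}(g-s\operatorname{log}p)\in\mathcal U$. Equivalently, for every $\psi\in\Diff^{\operatorname{an}}$ and every $(p-1)$-st root of unity $\mu$ you need some $\psi'\in\Diff^{\operatorname{an}}$ with $D_x\psi'=(D_x\psi)^\mu$. This is a Lipschitz condition on an antiderivative. It does not follow from the pointwise identity $|(D_x\psi)^\mu-1|_p=|D_x\psi-1|_p$ (compare Remark~\ref{rem:lipmapnec}).

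With power-series antiderivatives it genuinely fails. Work on $\ZZ_p$ with $p$ odd, take $\psi=\id+\frac{2p}{p^2+1}x^{(p^2+1)/2}$, and take $\mu\neq1$. Then $(D_x\psi)^\mu=\sum_k\binom{\mu}{k}p^kx^{k(p^2-1)/2}$. Its antiderivative vanishing at $0$ is $\id+F$, where $F$ contains $\binom{\mu}{2}x^{p^2}$ with a unit coefficient and has all other coefficients in $p\ZZ_p$. So $F$ has Lipschitz constant $1$ on $\ZZ_p$, and $\id+F\notin\Diff^{\operatorname{an}}$. For locally analytic maps one can repair this by choosing the antiderivative ball by ball with locally constant corrections.

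\textbf{Second gap: injectivity.} That same freedom defeats the injectivity you import from Theorem~\ref{thm:isometry:infty}, because $D_x$ kills locally constant functions on $\mathbb Q_p$. For example, take $\id+p^N\mathbf{1}_{1+p^M\ZZ_p}$ (indicator function) with $N\geq M\geq1$. It lies in $\Diff^{\operatorname{an}}(\mathbb Q_p)$, has derivative $1$, and has the same image $0$ as $\id$, yet it is not of the form $u\cdot\id+b$. Likewise $\varphi+a\,p^N\mathbf{1}_{1+p^M\ZZ_p}$ stays in the fibre of any $\varphi$.

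Hence, with $C^{\operatorname{an}}$ meaning locally analytic, every fibre is strictly larger than $\mathbb F_p^\times\times\mathbb Q_p$. Your local trivialization is then not onto the preimage. A real proof would first have to restrict to a class of maps in which $\varphi(0)$ and $D_x\varphi$ determine $\varphi$, and then establish the $\mu$-twist inside that class.
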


\begin{proof}
For any 
$\varphi=a\operatorname{id}+b+f$ in
$\widetilde\Diff^{\operatorname{an}}(\mathbb Q_p)$,
we have $||D_xf||<||a||$. Consequently, the order of $D_x\varphi$ is exactly the order of $a$.
The term $s\operatorname{log}p$ appearing in $\widetilde{\operatorname{log}}_l(D_x\varphi)$ with respect to decomposition~\ref{genlogqp} remains invariant when $x$ goes through $\mathbb Q_p$, so the image of  $\widetilde\Phi_\infty$ is indeed in $C_+^{\operatorname{an}}(\mathbb Q_p)$.

It is left to characterize the fibre of $\widetilde\Phi_\infty$. 
Assume that $\varphi_i=a_i\operatorname{id}+b_i+f_i$ and $D_x\varphi_i=p^{s_i}u_ig_i$ such that $s_i\in\ZZ$ and $g_i\in 1+p\ZZ_p$.
Then the following equivalence relation holds
\begin{align*}
\widetilde{\operatorname{log}}_l(D_x\varphi_1)=\widetilde{\operatorname{log}}_l(D_x\varphi_2)
\Longleftrightarrow
s_1=s_2 \;\text{and} \; \omega^l(\tilde u_1 \tilde u_2^{-1})\operatorname{log}(g_1)=\operatorname{log}(g_2),
\end{align*}
where $\tilde u_i\in \mathbb F_p^\times$ is the residue class of  $u_i \operatorname{mod}p$. Thus the fibre of each $\widetilde{\operatorname{log}}_l(D_x\varphi)\in C^{\operatorname{an}}(\mathbb Q_p)$ is an arbitrary point $(u,b)\in \mathbb F_p^\times\times\mathbb Q_p$. 
\end{proof}
Now we can present the $p$-adic Bers embedding on the quotient space $\Diff^{\operatorname{an}}(\mathbb Q_p)$. Maybe the term "embedding" is  slightly inappropriate, since tangent map $T\beta$ is only injective in a closed-open neighborhood of zero.
\begin{thm}
The mapping 
\begin{equation}
\beta: \begin{cases}
\left(\Diff^{\operatorname{an}}(\mathbb Q_p), F_r\right)  	&\to  \left(C^{\operatorname{an}}(\mathbb Q_p),L^r\right)\\ 
\varphi 						&\mapsto S\{\varphi\}
\end{cases}
\end{equation} 
is an injective mapping onto a subset of 
 $C^{\operatorname{an}}(B)$, and the tangent map $D\beta$ is an injective in the unit open ball $\{v:|v|<1\}$ around zero.
Here $S$ is the Schwarzian derivative. 
\end{thm}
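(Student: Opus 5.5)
We prove injectivity of $\beta$ on the quotient, then injectivity of $D\beta$ near zero, reducing both to the Riccati-type identity~\eqref{eq:schwazexpression}.

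\textbf{Injectivity of $\beta$.} Write $u_\varphi=\Phi_\infty(\varphi)=\operatorname{log}(D_x\varphi)$, so that by~\eqref{eq:schwazexpression}
\begin{equation*}
S\{\varphi\}=D_x^2u_\varphi-\tfrac12(D_xu_\varphi)^2 .
\end{equation*}
Suppose $S\{\varphi_1\}=S\{\varphi_2\}$.

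\begin{itemize}
\item Use the cocycle rule $S\{\varphi\circ\phi\}=(S\{\varphi\}\circ\phi)(D_x\phi)^2+S\{\phi\}$. This follows formally from the additive rule for $\operatorname{log}(D_x(\varphi\circ\phi))$ displayed after Theorem~\ref{thm:isometry:infty}, differentiated twice. It reduces the claim to the following: $S\{\psi\}=0$ with $\psi=\varphi_1\circ\varphi_2^{-1}$ forces $\psi$ to be a M\"obius map.
\item Set $w=(D_x\psi)^{-1/2}$. For $\psi\in\Diff^{\operatorname{an}}(B_\epsilon)$ we have $D_x\psi\in 1+p\ZZ_p$, and for odd $p$ the square root is defined by the binomial series. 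Then $S\{\psi\}=0$ is equivalent to the linear equation $D_x^2w=0$ on each ball where $\psi$ is given by a power series.
\item So $w$ is affine there, and $\psi$ is locally M\"obius.
\item The normalisation in the quotient~\eqref{quotentgroup}, together with $\|D_xf\|<1$, excludes nontrivial fractional-linear parts, since a pole would violate $\|D_x\psi\|=1$. This leaves $\psi$ affine, that is, $\psi\in\operatorname{A}(\mathbb Q_p)$. Hence $\varphi_1=\varphi_2$ in $\Diff^{\operatorname{an}}(\mathbb Q_p)$.
\end{itemize}

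\textbf{Main obstacle: local versus global.} The step from "$D_x^2w=0$" to "$w$ affine" is the one I expect to be hardest, because of the $p$-adic failure of "derivative zero implies constant". A locally analytic function with vanishing second derivative is only \emph{locally} affine, so $\psi$ could a priori be a different affine map on each small ball. I would handle this by restricting to elements given by a single convergent power series on the ball; this is implicit in the inverse formula of Theorem~\ref{thm:isometry:m}, where $D_x^{-1}$ acts on power series. For a power series, $D_x^2w=0$ kills all coefficients beyond degree one. I would state this restriction explicitly as the working interpretation of $C^{\operatorname{an}}$ here. Otherwise injectivity holds only modulo the group of locally affine maps.

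\textbf{Injectivity of $D\beta$.} Linearise at the identity. For a tangent vector $v=X$, we have $D_{\id}\beta(X)=D_x^3X$, since $u_{\id+tX}=tD_xX+O(t^2)$ and the quadratic term drops out. By the same power-series argument, $D_x^3X=0$ forces $X$ to be quadratic, which is an infinitesimal M\"obius map. Such an $X$ is killed in the quotient, so $D_{\id}\beta$ is injective.

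To move to a general point $\varphi$ with $|v|<1$, use right invariance. The cocycle rule gives $D_\varphi\beta(X\circ\varphi)=(D_x^3X\circ\varphi)(D_x\varphi)^2$. The factor $(D_x\varphi)^2$ has norm one, so injectivity is preserved.

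The restriction to the open unit ball is there to keep the logarithm and the square root inside their domains of convergence, namely $|z|_p<1$ in $D$. This is exactly where the binomial and logarithmic series, and hence the whole linearisation, remain valid.
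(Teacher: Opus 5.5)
There is a genuine gap. Your right-invariance reduction of $D\beta$ to the identity is correct. Your local-versus-global caveat is also a real point, and the paper's proof ignores it.

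\textbf{The gap: M\"obius versus affine.} You treat M\"obius maps as if the quotient \eqref{quotentgroup} removed them. But $\Diff^{\operatorname{an}}(\mathbb Q_p)$ is the quotient by the \emph{affine} group $\operatorname{A}(\mathbb Q_p)$ only.

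In the $D\beta$ step, $\ker D_{\id}\beta=\{X: D_x^3X=0\}$ consists of the fields $X=ay^2+by$. Only $by$ is an affine direction. The field $y^2$ generates the projective flow $y\mapsto y/(1-ty)$, so ``such an $X$ is killed in the quotient'' is false.

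The same conflation breaks the $\beta$ step in the setting you chose, namely single power series on a ball. There a M\"obius map whose pole lies outside the ball is admissible. For instance, $\psi(x)=x/(1-px)$:
\begin{itemize}
\item maps $\ZZ_p$ bijectively to itself, with inverse $y/(1+py)$;
\item satisfies $\psi(0)=0$, $D_x\psi=(1-px)^{-2}\in 1+p\ZZ_p$ and $S\{\psi\}=0$;
\item is not affine.
\end{itemize}
So ``a pole would violate $\|D_x\psi\|=1$'' fails on the ball, and $\beta(\psi)=\beta(\id)$. Likewise $y^2$ is a legitimate tangent vector there, with $D_{\id}\beta(y^2)=0$. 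Gluing this $\psi$ to $\id$ off $\ZZ_p$ also shows that, for locally analytic maps on $\mathbb Q_p$, the exceptional maps are the locally \emph{M\"obius} ones, not just the locally affine ones.

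\textbf{What is needed instead.} You must exclude the projective direction by growth on all of $\mathbb Q_p$, not by the quotient. Suppose perturbations are single power series convergent on all of $\mathbb Q_p$.
\begin{itemize}
\item For $D\beta$: $D_x^3X=0$ gives $D_xX=2ay+b$, which is unbounded on $\mathbb Q_p$ unless $a=0$. So $y^2$ is not an admissible direction, and only then does the affine quotient remove $by$.
\item For $\beta$: formal uniqueness for $S\{\cdot\}=\sigma$ gives a M\"obius map $y\mapsto (ay+b)/(cy+d)$ with $\varphi_1(c\varphi_2+d)=a\varphi_2+b$ as an identity of everywhere convergent series. Surjectivity of $\varphi_2$ onto $\mathbb Q_p$ then forces $c=0$.
\end{itemize}
The second argument also avoids your detour through $\psi=\varphi_1\circ\varphi_2^{-1}$. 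That composite need not be a single global power series even when the $\varphi_i$ are.

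\textbf{Comparison with the paper.} The paper's proof has the same M\"obius-versus-affine conflation in the injectivity step. For $D\beta$ it relies on a norm bound on $\delta h$. Such a bound cannot by itself exclude a nonzero kernel, because a linear kernel, once nonzero, contains elements of arbitrarily small norm. Your right-invariance route is the better one, once the growth argument above is added.
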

\begin{proof}
First we show that $\beta$ is indeed an embedding. Let $\varphi=\operatorname{id}+f$ and $h=\operatorname{log}(D_x\varphi)$. Using formula~\ref{eq:schwazexpression}, the mapping $\beta$ and its differential can be expressed
as the following
\begin{align*}
\beta:\varphi\to D_x^2h-\frac{1}{2}(D_xh)^2, \; \;  D\beta:\delta\varphi\to D_x^2(\delta h)-D_xh \cdot D_x(\delta h),    
\end{align*}
where $\delta h$ is a variation of $h$.  Hence we read off the kernel of $D\beta$ 
\begin{equation}\label{ode1}
\delta h=x+D_x^{-1}(\operatorname{exp}(h)-1)
\end{equation} 
But using Taylor expansion~\ref{logtaylorexp} and $||D_x f||<1$, we see that $||h||<1$ and using assumption $||\delta h||<1$, there is a contradiction if $\delta h\neq0$. The map $\beta$ is  injective
simply by solving the differential equation (well-known in the complex case): the only possible transformations that annul the Schwarzian are the Mobius transformations, which are already eliminated by quotient relation~\ref{quotentgroup}.  
\end{proof}
\begin{rem}
In the paper~\cite{Lu2025}, the image of real Bers embedding is proved to be unbounded and non-open, it remains a question how  to characterize the $p$-adic Bers embedding by its image.    
\end{rem}

\subsection{$p$-adic Fisher-Rao metric and location-scale family}
In preceding subsection, we define a right-invariant Finsler metric $F_r$ which does not carry a good structure for analysis. For this reason, we introduce another type of metric, i.e., the Fisher-Rao metric.  The classical Fisher-Rao information metric $g$ 
\begin{equation}\label{fisherraometric}
g_{ij}(\theta)=\int_X \frac{\partial \operatorname{log}\rho(x;\theta)}{\partial\theta_i}\frac{\partial \operatorname{log}\rho(x;\theta)}{\partial\theta_j}\rho(x;\theta)dx.     
\end{equation}
is defined for finite dimensional statistical models, and play a central role in information geometry; see e.g.,~\cite{Amari2000}. More recent efforts focus on considering the Fisher-Rao metric as a unparametrized one on the space of probability densities of
a compact manifold $M$, and in his paper~\cite{khesin2011}, Khesin et. al. showed us that one can interpret Fisher-Rao metric as a sort of right-invariant, homogeneous Sobolev metric on a group of diffeomorphisms. We use this interpretation to define a Fisher-Rao type metric on $\Diff^{\operatorname{an}}(B_\epsilon)$, and adopt a more generalized version, i.e., the $L^r$-Fisher-Rao metric, which was introduced in~\cite{Lu2023,bauer2023}.
\begin{defi}
Let $r\in[1,\infty]$.
For any $\varphi=\operatorname{id}+f\in\Diff^{\operatorname{an}}(B_\epsilon)$ with $D_xf$ non-vanishing, and any tangent vector $h\in T_\varphi \Diff^{\operatorname{an}}(B_\epsilon)$, the $L^r$-Fisher-Rao metric is defined as 
\begin{equation}\label{FRmetric}
||h||_{\operatorname{FR}}:= \left(\int_{B_\epsilon}||D_xh||^r||{D_x f}||^{1-r} |dx|\right)^{1/r}.  
\end{equation}
\end{defi}
We should note that the classical Fisher-Rao metric is equivalent to our definition of $L^2$-Fisher-Rao metric on $\Diff^{\operatorname{an}}(B_\epsilon)$. That is to say, the differential $D_xf$ of perturbation function $f$  can be seen as a sort of probability density function  $p$ in formula~\ref{fisherraometric}.
\begin{rem}
Since the ball $B_\epsilon$ is compact for any finite $\epsilon>0$,  there exist  positive numbers $L$ and $M$ such that the locally analytic functions $h$ and $f$ satisfy $||D_xh||\leq M$ and $||D_xf||\geq L$. Therefore, the integral in the definition is convergent. For some reason, we are interested in the case for which $B_\epsilon$ is not compact, i.e., $B_\infty=\mathbb Q_p$. 
In this case, not all the functions $f\in C^{\operatorname{an}}(B_\epsilon)$ generate a convergent integral. 
\end{rem}
Next we consider a perturbation function $f$ in the set 
\begin{align*}
P:=\{f: \varphi=\operatorname{id}+f\in\Diff^{\operatorname{an}}(\mathbb Q_p), \; D_xf\neq0\}.    
\end{align*}
The location-scale family of $f$ in $\mathbb Q_p$ is defined as 
\begin{equation}
\rho(x;t,\sigma):=\frac{1}{\sigma}D_xf(\frac{x-t}{\sigma}),    
\end{equation}
where $x,t\in \mathbb Q_p$ and $\sigma\in\mathbb Q_p^\times$. Let $z=\frac{x-t}{\sigma}$, then we have an equality of differential forms, 
\begin{align*}
\rho(x;t,\sigma)dx=D_zf(z)dz.   
\end{align*}

For the real line case indicated in~\cite{Lu2025},    we see that each of these families  endowed with the Fisher-Rao information metric induces a hyperbolic structure on the upper half plane $\mathbb H$. For the $p$-adic case, there is no such upper half plane since the $p$-adic field is totally disconnected and there is no ordered structure such that the imaginary part $>0$ makes sense. Consequently, we  define the domain that carries the hyperbolic structure to be  
\begin{align*}
A_\infty:=\{(t,\sigma): t\in \mathbb Q_p, \sigma\in \mathbb Q_p^\times\}  \; \; \text{on} \; \Diff^{\operatorname{an}}(\mathbb Q_p).   
\end{align*}
This corresponds to 
the union of the upper half plane $\mathbb H$ and its dual, the lower half plane  $\mathbb H^*$ in the real line case, where we only need to take one piece of the two disjoint half planes since $\mathbb R^\times=\RR^+\cup\RR^-$. 
Now we can state the $p$-adic version of the correspondence between Fisher-Rao metric and hyperbolic structure. 
\begin{thm}\label{thm:fisherrao}
Let the location-scale family $\rho(x;t,\sigma)$  of a perturbation function $f\in P$ be convergent for $p$-adic Fisher-Rao metric~\ref{FRmetric}, then the metric on the location-scale family  is a  hyperbolic metric on  $A_\infty$ spanned by $\sigma$ and $t$. 
\end{thm}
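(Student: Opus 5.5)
The plan is to mimic the classical computation of the Fisher--Rao information matrix for a location-scale family, as in the real line case of \cite{Lu2025}, but with the $p$-adic Haar measure and Proposition~\ref{changevariable} replacing Lebesgue change of variables. Write $\rho=\rho(x;t,\sigma)=\sigma^{-1}D_xf(z)$ with $z=(x-t)/\sigma$. The Fisher--Rao metric is formula~\ref{fisherraometric} (the $r=2$ case of \ref{FRmetric}) restricted to the two-parameter submanifold $(t,\sigma)\in A_\infty$. A tangent vector is $\delta\rho=\partial_t\rho\,\delta t+\partial_\sigma\rho\,\delta\sigma$.

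First compute the logarithmic derivatives. Put $q(z):=D_zf(z)$, so $\log\rho=-\log\sigma+\log q(z)$. Then
\begin{align*}
\partial_t\log\rho=-\frac{1}{\sigma}\frac{q'(z)}{q(z)},\qquad \partial_\sigma\log\rho=-\frac{1}{\sigma}\Bigl(1+z\frac{q'(z)}{q(z)}\Bigr).
\end{align*}
In both expressions $\sigma$ factors out. Here one should use the derivative of $\log$ formally, or via Theorem~\ref{thm:isometry:infty}, so no convergence of $\log$ itself is needed.

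Next substitute into the metric integral. Use the identity $\rho(x;t,\sigma)dx=D_zf(z)dz$ together with Proposition~\ref{changevariable}, which gives $|dx|=|\sigma|_p|dz|$. Each entry $g_{ij}$ then becomes $|\sigma|_p^{-2}$ (the norms of the $1/\sigma$ factors) times an integral over $\mathbb Q_p$ in $z$ alone. These integrals are
\begin{align*}
I_{tt}=\int\Bigl\|\frac{q'}{q}\Bigr\|^2\|q\|\,|dz|,\quad I_{t\sigma}=\int\Bigl\|\frac{q'}{q}\Bigr\|\Bigl\|1+z\frac{q'}{q}\Bigr\|\,\|q\|\,|dz|,\quad I_{\sigma\sigma}=\int\Bigl\|1+z\frac{q'}{q}\Bigr\|^2\|q\|\,|dz|.
\end{align*}
They are independent of $(t,\sigma)$, and they are finite by the convergence hypothesis. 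The metric therefore has the form
\begin{align*}
ds^2=\frac{I_{tt}\,dt^2+2I_{t\sigma}\,dt\,d\sigma+I_{\sigma\sigma}\,d\sigma^2}{|\sigma|_p^2}.
\end{align*}

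Finally, diagonalize the constant quadratic form by an affine change $t'=t+c\sigma$ and a rescaling. This brings it to $\lambda(dt'^2+\kappa\,d\sigma^2)/|\sigma|_p^2$, the Poincar\'e-type hyperbolic form on $A_\infty$. Invariance under the affine group $x\mapsto ax+b$, which acts on $(t,\sigma)$ by $(at+b,a\sigma)$, confirms that this is the hyperbolic structure.

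The main obstacle is the middle step. In the $p$-adic norm there is no cancellation, so a cross term cannot be killed by symmetry as in the real case, and the ``quadratic form'' built from $p$-adic absolute values is only a formal analogue. I would handle this by interpreting the metric as in \ref{FRmetric}: norms of derivatives integrated against Haar measure. I would then check that the $|\sigma|_p^{-2}$ scaling, which is what hyperbolicity really encodes, survives. I would also need to verify that the constants are nondegenerate ($I_{tt}I_{\sigma\sigma}>I_{t\sigma}^2$), perhaps using the ultrametric inequality on $1+zq'/q$.
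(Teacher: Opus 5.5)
Your first half matches the paper's computation: the logarithmic derivatives, $|dx|=|\sigma|_p|dz|$, and a factor $|\sigma|_p^{-2}$ in front of integrals independent of $(t,\sigma)$. The gap is in the step meant to produce the hyperbolic form.

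For a $p$-adic tangent vector $(\delta t,\delta\sigma)$ the Fisher--Rao integrand is $|\sigma|_p^{-2}\bigl|\frac{q'}{q}\delta t+(1+z\frac{q'}{q})\delta\sigma\bigr|_p^2|q|_p$. This is the $p$-adic absolute value of a $p$-adic linear combination, and it does not expand as $|a|^2|\delta t|^2+2|a||b||\delta t||\delta\sigma|+|b|^2|\delta\sigma|^2$. So your $ds^2$ with cross term $2I_{t\sigma}\,dt\,d\sigma$ is not the metric, and there is no quadratic form to diagonalize:
\begin{itemize}
\item The shear $t'=t+c\sigma$ with $c=I_{t\sigma}/I_{tt}\in\RR$ is not a map of $\mathbb Q_p\times\mathbb Q_p^\times$ at all.
\item Even for $c\in\mathbb Q_p$, $|\delta t+c\,\delta\sigma|_p=\max(|\delta t|_p,|c|_p|\delta\sigma|_p)$ whenever the two sizes differ, so completing the square has no analogue.
\item Rescaling away $\kappa$ needs a scalar of absolute value $\sqrt\kappa$, which exists only if $\sqrt\kappa\in p^{\ZZ}$.
\end{itemize}
For the same reason, $I_{tt}I_{\sigma\sigma}>I_{t\sigma}^2$ is not the relevant nondegeneracy criterion. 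The normal form $\lambda(dt'^2+\kappa\,d\sigma^2)/|\sigma|_p^2$ is therefore not reachable.

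The paper never looks for a normal form. It keeps the integrand in ultrametric form and bounds it pointwise by the strong triangle inequality in terms of the $tt$, $\sigma\sigma$ and $t\sigma$ pieces. It then takes the resulting $|\sigma|_p^{-2}$-weighted non-archimedean norm as the hyperbolic structure. Concretely, the metric is $N(\delta t,\delta\sigma)/|\sigma|_p^2$ with $N(\delta t,\delta\sigma)=\int\bigl|\frac{q'}{q}\delta t+(1+z\frac{q'}{q})\delta\sigma\bigr|_p^2|q|_p\,|dz|$, and $N$ is independent of the base point.

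That is your stated fallback. Together with your affine-invariance remark, which does survive because $N(av)=|a|_p^2N(v)$, it is all the argument actually needs; drop the diagonalization.

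If you want nondegeneracy, use the identity $\frac{q'}{q}\delta t+(1+z\frac{q'}{q})\delta\sigma=\frac1q\frac{d}{dz}\bigl[q(z)(\delta t+z\,\delta\sigma)\bigr]$. Since the integrand is continuous and Haar measure charges open sets, $N=0$ forces $q(z)(\delta t+z\,\delta\sigma)$ to be locally constant.
\begin{itemize}
\item For $\delta\sigma\neq0$ this is impossible: the function vanishes at $z=-\delta t/\delta\sigma$, so $q$ would vanish near that point, contradicting that $q$ is nowhere zero.
\item For $\delta\sigma=0$ it happens exactly when $q'\equiv0$, i.e.\ $D_zf$ is locally constant.
\end{itemize}
The case $q'\equiv0$ is possible $p$-adically, for example $f$ locally affine with all integrals finite. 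Neither the statement nor the paper's proof excludes it.
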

\begin{proof}
Set $h=D_xf$ for a fixed $f\in P$. Then the logarithmic probability density is given by
\begin{align*}
\operatorname{log}\rho=-\operatorname{log}\sigma+  \operatorname{log}h(z).  
\end{align*}
The partial derivatives can be computed as 
\begin{align*}
\partial_t\operatorname{log}\rho=-\frac{D_zh(z)}{\sigma h(z)}, \; \;  \partial_\sigma\operatorname{log}\rho=-\frac{1}{\sigma }\left(1+z\frac{D_zh(z)}{h(z)}\right).   
\end{align*}
Next we compute each component of Fisher-Rao metric~\ref{fisherraometric}
\begin{align*}
g_{tt}&=\frac{1}{|\sigma|^2}\int_{B_\epsilon} \left|\frac{D_zh(z)}{h(z)}\right|^2 |h(z)dz|, \; \; g_{\sigma\sigma}=\frac{1}{|\sigma|^2}\int_{B_\epsilon} \left|1+z\frac{D_zh(z)}{h(z)}\right|^2 |h(z)dz|,
\\g_{t\sigma}&=\frac{1}{|\sigma|^2}\int_{B_\epsilon} \left|\frac{D_zh(z)}{h(z)}\left(1+z\frac{D_zh(z)}{h(z)}\right)\right| |h(z)dz|.  
\end{align*}
Then the infinitesimal  metric $g$ on an open subset is written as
\begin{align*}
&\int_{B_\epsilon} \left|\left(\frac{D_zh(z)}{h(z)}\right)^2dt^2+\left(1+z\frac{D_zh(z)}{h(z)}\right)d\sigma^2
+ 2\frac{D_zh(z)}{h(z)}\left(1+z\frac{D_zh(z)}{h(z)}\right)dtd\sigma\right| \frac{|h(z)dz|}{|\sigma|^2}
\\&\leq\operatorname{max}\{g_{tt}|dt|^2, g_{\sigma\sigma}|d\sigma|^2, g_{t\sigma}|dt||d\sigma|\}.
\end{align*}
This non-archimedean norm gives a hyperbolic structure on $\mathbb Q_p\times \mathbb Q_p^{\times}$. 
\end{proof}
It is clear that the integrand of $g$ in the theorem is non-vanishing for any $z$ in $B_\epsilon$. Indeed, if $D_zh(z)=0$, then the second term $1+zD_zh(z)/h(z)$ equals one. But in some direction with respect to the slope $dt/d\sigma$, 
the integral may vanish. If this does not happen, we have the following corollary by applying Serre's theorem~\ref{serrethm2}. 
\begin{cor}
Let $F(z):=-(zD_zh(z)+h(z))/D_zh(z)$. If the slope $dt/d\sigma$ has no intersection with the image of $F$, then the integrand of $g$ is nonwhere vanishing and  
$g$ is
equal to  the Serre invariant $i(B_\epsilon)$ modulo $p-1$. 
\end{cor}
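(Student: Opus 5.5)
The plan is to reduce the corollary to two ingredients: pointwise non-vanishing of the integrand of $g$ under the slope hypothesis, and then Serre's Theorem~\ref{serrethm2} applied to the top form this integrand defines on $B_\epsilon$. Fix a direction in the $(t,\sigma)$-plane, write $dt=\lambda\,d\sigma$, and set $q(z)=D_zh(z)/h(z)$. The integrand in the proof of Theorem~\ref{thm:fisherrao} becomes
\begin{align*}
\Bigl|\lambda^2q(z)^2+\bigl(1+zq(z)\bigr)^2+2\lambda q(z)\bigl(1+zq(z)\bigr)\Bigr|\,\frac{|h(z)dz|}{|\sigma|^2}|d\sigma|^2
=\bigl|\lambda q(z)+1+zq(z)\bigr|^2\frac{|h(z)dz|}{|\sigma|^2}|d\sigma|^2 .
\end{align*}
Here I would read the middle coefficient as $(1+zq)^2$, which matches the definition of $g_{\sigma\sigma}$. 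The bracket is then a perfect square.

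First, characterize the vanishing locus. The square vanishes exactly when $\lambda D_zh+h+zD_zh=0$. When $D_zh(z)\neq0$, this is $\lambda=-(zD_zh+h)/D_zh=F(z)$. When $D_zh(z)=0$, the expression equals $h(z)\neq0$, because $f\in P$; this is the remark made just after the theorem. So the hypothesis $\lambda\notin F(B_\epsilon)$ is equivalent to the integrand being nowhere zero on $B_\epsilon$.

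Second, produce a non-vanishing analytic top form. On $B_\epsilon$ put $\omega_\lambda:=(\lambda q+1+zq)^2h\,dz=(\lambda D_zh+h+zD_zh)^2h^{-1}dz$. Since $h=D_zf$ is non-vanishing and locally analytic, this is a locally analytic $1$-form. By the first step it has empty vanishing set, so it defines a measure $\mu_{\omega_\lambda}$ on all of $B_\epsilon$. Its total mass equals $|\sigma|^2|d\sigma|^{-2}$ times the value of $g$ in the direction $\lambda$.

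Third, apply Theorem~\ref{serrethm2} with $M=B_\epsilon$, which is compact. This gives $\int_{B_\epsilon}|\omega_\lambda|=N/q^m$ with $N\equiv i(B_\epsilon)\bmod (q-1)$, independently of $\lambda$. Take $K=\mathbb Q_p$, so $q=p$. Then the normalizing factor $|\sigma|^2|d\sigma|^{-2}$ is a power of $p$, which is $\equiv1\bmod(p-1)$. So $g\equiv i(B_\epsilon)\bmod (p-1)$ in Serre's sense, that is, comparing numerators after clearing powers of $p$.

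The main obstacle is making sense of the statement ``$g$ equals an invariant mod $p-1$''. Strictly, this needs $g$ to be read as $N/p^m$ up to a power of $p$, and the scaling by $|\sigma|$ and $|d\sigma|$ must be absorbed using $p\equiv1\bmod (p-1)$. I would state this interpretation explicitly at the start of the proof. Less serious but also needing care: the non-archimedean inequality $\le\max$ in Theorem~\ref{thm:fisherrao} is replaced here by an exact identity, because the integrand is a perfect square. I would avoid the max bound entirely and work directly with $\omega_\lambda$.
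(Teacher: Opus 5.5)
Your proposal is correct and follows the same route as the paper. The paper only remarks before the corollary that the integrand vanishes exactly when $dt/d\sigma$ meets $F(z)$ (points with $D_zh(z)=0$ give $1+zD_zh/h=1$) and then invokes Theorem~\ref{serrethm2}; your perfect-square rewriting (which also repairs the missing square on $1+zD_zh/h$ in the paper's display), the explicit nowhere-vanishing form $\omega_\lambda$, and the absorption of the power of $p$ from $|\sigma|^{-2}|d\sigma|^2$ simply make that one-line argument precise. The only case your parametrization $dt=\lambda\,d\sigma$ omits is the vertical direction $d\sigma=0$, where $F$ must be read as $\mathbb{P}^1$-valued, so that the hypothesis means $D_zh$ never vanishes.
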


\section{$p$-adic Teichm\"{u}ller theory}\label{sec:padicteichmuller}
The $p$-adic Teichm\"{u}ller theory developed by  Mochizuki is a generalization of the Fuchsian uniformization to the $p$-adic context; see e.g.,~\cite{mochizuki96, mochizuki99, mochizuki02}. It mainly focuses on the hyperbolic curve $C$ over a positive characteristic perfect field $k$, with nilpotent ordinary indigenous bundle $P$ over $C$. Mochizuki"s algebraic approach retrieves information of certain classes of hyperbolic curves from its \'etale fundamental group, now known as anabelian geometry. In this section, we will introduce the topic of $p$-adic Teichm\"{u}ller theory from an analytic point of view, which is based on the relationship between $p$-adic Lie group and $p$-adic Teichm\"{u}ller space discussed in the preceding section.

\subsection{Teichm\"{u}ller characters and Witt vectors}
Let $\mathcal O_k$ be a complete discrete valuation ring with the residue field $k$, which is a perfect field with characteristic $p>0$. In  the following, we consider a simple case, the $p$-adic integers $\ZZ_p$ with the residue field $\ZZ_p/p\ZZ_p\cong\mathbb F_p$, i.e., the finite field of $p$ elements.

The Teichm\"{u}ller character $\omega$ is a character of $\mathbb F_p^\times$ (resp. $\mathbb F_4^\times$), when $p$ is an odd prime
(resp. $p=2$), taking values in the root of units in $p$-adic integers. It chooses for each residue class $x$ (mod $p$) a unique canonical representative such that 
$\omega(x)^{p-1}=1$. The existence and uniqueness of such a representative is a consequence of Hensel's lemma. In addition, we indeed have a multiplicative character by applying the uniqueness to the fact that $\omega(x)\omega(y)$ is again a $p-1$-th root and $\omega(xy)\equiv\omega(x)\omega(y)$ (mod $p$). 

This character $\omega:\mathbb F_p^\times\to \ZZ_p^\times $ can be viewed as a limit of compositions of Frobenius morphisms, similar to the situation shown in~\ref{eq:limlog}. The following proposition is in~\cite[Exercise~I.5.19]{koblitz1977}, for which we give a proof since our arguments heavily depend on this proposition.
\begin{prop}\label{teichmullerchar}
Let  $x\in\ZZ_p$ be an element in the residue class $\tilde x\equiv\tilde a\ (\operatorname{mod} p)$ for $a\in\{1,2,\cdots,p-1\}$.  Then $x^{p^m}$ converges to  $\omega(\tilde a)$ as $m\to\infty$. 
\end{prop}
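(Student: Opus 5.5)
The plan is to show that the sequence $(x^{p^m})_m$ is Cauchy in $\ZZ_p$. We then identify its limit as a $(p-1)$-th root of unity congruent to $a$ modulo $p$, and conclude by the uniqueness built into the definition of $\omega$ (via Hensel's lemma).

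\emph{Step 1 (basic congruence).} We first show that if $y\equiv z \pmod{p^{k}}$ with $k\geq1$, then $y^p\equiv z^p \pmod{p^{k+1}}$. Write $y=z+p^k t$ and expand binomially:
\begin{equation*}
y^p=z^p+p\cdot z^{p-1}p^k t+\sum_{j=2}^{p}\binom{p}{j}z^{p-j}p^{kj}t^j .
\end{equation*}
The middle term is divisible by $p^{k+1}$. For $2\leq j\leq p-1$ the binomial coefficient contributes a factor $p$, so these terms are divisible by $p^{1+2k}$. The last term ($j=p$) is divisible by $p^{kp}$, and $kp\geq k+1$ because $k\geq 1$ and $p\geq2$.

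\emph{Step 2 (Cauchy).} By Fermat's little theorem, since $x\not\equiv0$, we have $x^{p}\equiv x \pmod p$, i.e. $x^{p^{1}}\equiv x^{p^0}\pmod{p}$. Applying Step 1 $m$ times gives
\begin{equation*}
x^{p^{m+1}}\equiv x^{p^{m}} \pmod{p^{m+1}}.
\end{equation*}
Hence $|x^{p^{m+1}}-x^{p^m}|_p\leq p^{-(m+1)}$, and by the ultrametric inequality the sequence is Cauchy. Since $\ZZ_p$ is complete, it converges to some $y\in\ZZ_p$. Moreover every term is congruent to $x^{p^m}\equiv x\equiv a\pmod p$, again by Fermat, so $y\equiv a\pmod p$; in particular $y$ is a unit.

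\emph{Step 3 (identifying the limit).} Continuity of the map $z\mapsto z^p$ gives
\begin{equation*}
y^p=\lim_m x^{p^{m+1}}=y .
\end{equation*}
Since $y$ is a unit, this yields $y^{p-1}=1$. So $y$ is a $(p-1)$-th root of unity lying in the residue class of $\tilde a$. By the uniqueness of such a root, which is exactly the Hensel's lemma statement used to define $\omega$ (the polynomial $X^{p-1}-1$ is separable modulo $p$), we conclude $y=\omega(\tilde a)$.

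The only genuinely non-trivial point is the congruence bootstrapping in Step 1, in particular handling the $j=p$ term. The case $p=2$ is consistent with this: there $\omega$ is trivial on $\mathbb F_2^\times$, and the argument above gives a limit $y$ with $y\equiv 1\pmod 2$ and $y^{p-1}=y^{1}=1$, so $y=1=\omega(\tilde 1)$ directly. The paper's mention of $\mathbb F_4$ is not needed for $\ZZ_p$, so nothing extra is required here.
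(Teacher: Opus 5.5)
Your proof is correct and has the same skeleton as the paper's. Both establish $x^{p^{m+1}}\equiv x^{p^m}\pmod{p^{m+1}}$, pass to the limit to get $y^p=y$, and invoke the Hensel uniqueness that defines $\omega$.

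The difference is in how the key congruence is obtained. The paper first treats only integer representatives $x=a\in\{1,\dots,p-1\}$ and gets the congruence from Euler's theorem, since $\phi(p^m)=p^{m-1}(p-1)$ gives $x^{p^m}=x^{p^{m-1}}\cdot x^{\phi(p^m)}$. It then passes to a general $x=a+pr$ by a binomial expansion showing $(a+pr)^{p^m}\equiv a^{p^m}\pmod{p^m}$, so both sequences have the same limit.

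Your $p$-th power lifting lemma handles every unit $x$ at once. It also makes explicit two things the paper leaves implicit:
\begin{itemize}
\item the valuation bookkeeping hidden in the ``$\cdots$'' of the paper's expansion, where the terms $\binom{p^m}{k}a^{p^m-k}p^kr^k$ with $k\ge2$ have valuation at least $m-v_p(k)+k\ge m+1$;
\item the fact that the limit is a unit, which is needed to go from $y^p=y$ to $y^{p-1}=1$.
\end{itemize}

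In the other direction, the Euler argument is a one-liner and already works for every $x\in\ZZ_p^\times$, because $|(\ZZ/p^m\ZZ)^\times|=p^{m-1}(p-1)$. So the paper's reduction to integer representatives is not actually needed.
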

\begin{proof}
Firstly we take $x\in\{1,2,\cdots,p-1\}$. By Euler's theorem, we have  \begin{equation}
x^{p^m}\equiv x^{p^{m-1}} (\operatorname{mod} p^m),   
\end{equation}  
which exactly means convergence in $p$-adic sense. We denote by $x_0$ its converging point. By passing to the limit, we obtain $x_0^p=x_0$. Then we have $\omega(\tilde x)=x_0$ by applying the uniqueness argument to the case $x_0\equiv x^{p^m}\equiv x (\operatorname{mod} p)$.

It remains to show if $x=a+pr$ for $r\in\ZZ_p$ and $a\in\{1,2,\cdots,p-1\}$, then $x^{p^m}$ converges to  $\omega(\tilde a)$. But from the Taylor series of $x^{p^m}$, we see that
\begin{equation}
x^{p^m}=(a+pr)^{p^m}=a^{p^m}+p^{m+1}r a^{p^m-1}+\cdots\equiv a^{p^m} (\operatorname{mod} p^m),  
\end{equation}
which implies $\omega(\tilde x)=\omega(\tilde a)$.
\end{proof}

Note that the character holds in a more general setting, that is to say, there exists a unique multiplicative section $\omega:k\to\mathcal O_K$ 
for the natural surjection
$\pi:\mathcal O_K\to k$, 
here $K$ is the $p$-adic completion of $k$.
\begin{rem}
The multiplicative group of $p$-adic units is given by 
\begin{equation}\label{decomppadic}
\ZZ_p^\times\cong U\times(1+p\ZZ_p),    
\end{equation}
where $U$ is the finite cyclic group of roots of unity of order $p-1$ (resp. $2$), when $p$ is an odd prime (resp. $p=2$). More precisely, the character  
$\omega:\mathbb F_p^\times\to  U$ gives a canonical isomorphism between these two multiplicative groups. The decomposition~\ref{decomppadic} is in closed relationship with the quotient relation~\ref{quotentgroup}: both are broken up into two different parts, one is a perturbation sufficiently near identity, the other is an affine transformation
(resp. a cyclic "rotation"). This can be further compared with the case of a compact Riemann surface, the group of diffeomorphisms or quasiconformal mappings can be seen as a composition of two distinct parts, one is the set of M\"{o}bius transformations or the Fuchsian group for its discete subgroups, the other is a perturbation of identity that gives rise to the local continuous properties. This interpretation has even more profound implications, the inter-universe Teichm\"{u}ller theory~\cite{mochizuki12} is divided, in Mochizuki's sense, into two different patterns: one is called the \'etale picture, which is more concerned with global transformations (discrete in some sense) that can be characterized by its \'etale fundamental group; the other is called the Frobenius picture, which is more concerned with local deformations (continuous/differentiable in some sense) that can be characterized by its analytic properties or differential geometry in Teichm\"{u}ller space. The latter one gets its name "Frobenius" from the fact that the log-category is a limit of Frobenius morphisms, which we have observed in Theorem~\ref{thm:isometry:infty}.
\end{rem}

The Teichm\"{u}ller character is remarkable for providing a canonical lifting from the residue field to the $p$-adic field, for this reason it is also known as Teichm\"{u}ller
lift. It will play an important role in our treatment of $p$-adic Teichm\"{u}ller since now we can embed the finite perfect field into an $p$-adic field, and adopt the methods or techniques of a $p$-adic field. 

One way to fulfill this purpose is to introduce the Witt vector, an infinite
sequence of elements of a commutative ring $R$. 
\begin{defi}
A Witt vector over a commutative ring $R$ is a sequence $(X_0, X_1,\cdots)$ of elements of $R$. The Witt polynomials $W_i$ can be defined by $W_0=X_0$, $W_1=X_0^p+pX_1$, and in general
\begin{equation}
W_n=\sum_{i=0}^{n}p^iX_i^{p^{n-i}}.     
\end{equation} 
The $W_n$, also denoted by $X^{(n)}$, are called the ghost components of the Witt vector. 

The ring of Witt vectors $W(R)$ is defined by a triple $(X^{i},+,\cdot)$ with componentwise addition and multiplication of the ghost components, i.e., 
$X^{(i)}+Y^{(i)}=(X+Y)^{(i)}$ and $X^{(i)}Y^{(i)}=(XY)^{(i)}$.
\end{defi}
In particular, for $R=\mathbb F_p$, we have $W(\mathbb F_p)\cong\ZZ_p$. This can be done by representing each $a\in\ZZ_p$ as 
\begin{align*}
a=\omega(a_0)+\omega(a_1)p+\omega(a_2)p^2+\cdots,    
\end{align*}
where $\omega:\mathbb F_p\to\ZZ_p$ is the Teichm\"{u}ller character with $\omega(0)=0$. Then the corresponding Witt vector is given by $(a_0,a_1,\dots,)\in W(\mathbb F_p)$.

Two basic operators on Witt vectors are the Verschiebung operator $V$ and Frobenius operator $F$.
\begin{defi}
The Verschiebung operator $V$ is a shift operator in $W(R)$ taking $(a_0, a_1,\cdots)$ to $(0, a_0, a_1,\cdots)$. The Frobenius operator $F$ is a Frobenius morphism in each component of $W(R)$, taking $(a_0, a_1,\cdots)$ to $(a_0^p, a_1^p,\cdots)$.    
\end{defi}
The two operators are dual to each other, i.e., $VF=FV=[p]$, where $[p]$ is the multiplication by $p$. Particularly, when $R=\mathbb F_p$, $V=[p]$ and $F=\operatorname{id}$, the duality is trivially true.

Let $k$ be a perfect field of characteristic $p>0$. The ring of Witt vectors
$W(k)$
carries a natural non-archimedean norm that makes it into a complete discrete valuation ring. 
More precisely, there is a uniformizer $p\in W(k)$
such that every non-zero ideal is of the form $(p^n)$, the residue field is $W(k)/pW(k)\cong k$, and the field of fractions of $W(k)$ is a complete discrete valuation field. Then following the same procedures to construct a non-archimedean norm on $\ZZ_p$, we obtain a non-archimedean norm on the fraction field of $W(k)$. 
Finally, it should be noted that almost all the preceding arguments regarding the Lie group structure on a $p$-adic field can be extended smoothly to those on a 
fraction field of $W(k)$.
%For this reason, we can reduce to the case $k=\mathbb F_p$ without much loss of generality.

\subsection{Automorphisms of a principal bundle and Indigenous bundles}
The notion of an indigenous bundle is central to Mochizuki's treatment of $p$-adic Teichm\"{u}ller theory. It was originally introduced by Gunning in~\cite{gunning1967} as a flat fibre bundle associated to the complex projective or affine structure. Mochizuki's approach mainly focuses on its algebraic aspect, and considers its relation with the natural monodromy representation of the fundamental group of a hyperbolic curve. However, this interpretation is not quite compatible with our analytic approach to a $p$-adic Lie group. Therefore, we will adopt a new diff-geometric point of view that is dramatically different from the algebraic one. First we recall the notion of a principal bundle, and for our use, we also generalize it to a $p$-adic field.
\begin{defi}
A principal bundle $(P,\pi, M,G)$ is a fiber bundle $\pi:P\to M$ consisting of a total space $P$, a based space $M$ and a structure
group $G$ such that $G$ preserves the fibres $P_x$ of $P$ and acts freely and transitively, i.e., for each $x\in X$ and $ y\in P_x$, the map $G\to P_x$ via $g\to yg$ is a isomorphism.
\end{defi}
Let $\operatorname{Aut}(P)$ be the group of automorphisms of $P$, i.e., bundle maps $\Psi:P\to P$ covering the diffeomorphism $\psi\in\operatorname{Diff}_{\operatorname{id}}(M)$ and are isomorphisms on fibres. In their paper~\cite{abbati1989}, Abbati et. al. showed that $\operatorname{Aut}(P)$ is the total space of a $C^\infty_c$-principal bundle $(\operatorname{Aut}(P),\tilde\pi, \operatorname{Diff}_{\operatorname{id}}(M),\operatorname{Gau}(P))$, where $\operatorname{Diff}_{\operatorname{id}}(M)$ is an open subgroup of $M$ containing the connected component of the identity, and $\operatorname{Gau}(P)$ denotes gauge group of $P$. This gives rise to a short exact sequence
\begin{equation}
1\to \operatorname{Gau}(P)\to \operatorname{Aut}(P)\to \operatorname{Diff}_{\operatorname{id}}(M)\to1.     
\end{equation}
In essence, their approach is based on the identification of $\operatorname{Hom}(P, P')$ with the set of 
smooth sections of a bundle, which can be seen as a 
manifold modelled on a suitable nuclear space, i.e., an inductive limit of nuclear Fr\'echet spaces.%(a $NLF$-space)

On the other hand,  smoothness is replaced by local analyticity for an infinite dimensional Lie group modelled on a $p$-adic field, and it is well-known that the space of locally analytic functions on a compact open $p$-adic manifold is an inductive limit of nuclear spaces. So there should be no difficulty generalizing the principal bundle structure on $\operatorname{Aut}(P)$ to the $p$-adic case, then the short exact sequence becomes 
\begin{equation}\label{padicexactseq}
1\to \operatorname{Gau}(P)\to \operatorname{Aut}(P)\to \operatorname{Diff}^{\operatorname{an}}(B_\epsilon)\to1,    
\end{equation}
where $\operatorname{Aut}(P)$ covers the $p$-adic Lie group $\operatorname{Diff}^{\operatorname{an}}(B_\epsilon)$.

Now we restrict ourselves to a specific case of principal bundles on a Riemann surface or a $p$-adic curve, an indigenous bundle.
The bundle indigenous to a Riemann surface $M$ is a complex projective  (resp. affine) line bundle $P$ such that there exists a complex projective (resp. affine) chart $(U_\alpha,z_\alpha)$, the associated transition functions $g_{\alpha\beta}$ are constant M\"{o}bius (resp. affine) transformations.  
Note that the indigenous bundle $P$ is a flat line bundle with fibre isomorphic to $  P_\CC^1$ (resp. $\CC$). Hence 
$\operatorname{Aut}(P)$ is decomposed as in sequence~\ref{padicexactseq} since an affine indigenous bundle $P$ is a principal bundle, while this is not applicable to the projective case since $  P_\CC^1$ is not a group.

For $p$-adic case, it manifests in 
exact sequence~\ref{padicexactseq} that $\operatorname{Aut}(P)$
of an affine indigenous bundle $P$ can be decomposed into two distinct parts, one is the Lie group $\operatorname{Diff}^{\operatorname{an}}(B_\epsilon)$ pertaining to perturbations of the identity, the other is the gauge group  $\operatorname{Gau}(P)$ on $B_\epsilon$ with each fibre seen as an affine transformation.
\begin{prop}
There is a family of locally constant sections in $\operatorname{Gau}(P)$, which is locally isomorphic to the group of affine transformations.     
\end{prop}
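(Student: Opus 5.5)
The idea is to use the flatness of the affine indigenous bundle $P$ over $B_\epsilon$ to build locally constant gauge transformations explicitly. The statement is local, so we may work over a single member of an atlas. Fix an affine indigenous atlas $(U_\alpha, z_\alpha)$ whose transition functions $g_{\alpha\beta}$ are \emph{constant} affine maps $w\mapsto a_{\alpha\beta}w+b_{\alpha\beta}$. By the Bourbaki/Serre structure results (Theorem~\ref{serrethm1}), the sets $U_\alpha$ may be taken to be disjoint compact open balls covering $B_\epsilon$. Over each $U_\alpha$ the bundle is trivialized as $P|_{U_\alpha}\cong U_\alpha\times K$.

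A gauge transformation is a fibre-preserving automorphism covering the identity. In this trivialization it is a map $x\mapsto \gamma_\alpha(x)\in \operatorname{A}(K)$, where $\operatorname{A}(K)$ is the affine group. The plan is to restrict to locally constant $\gamma_\alpha$, that is, to maps constant on each ball $U_\alpha$. Such a $\gamma$ is automatically locally analytic, so it defines an element of $\operatorname{Gau}(P)$, provided the local pieces glue. The gluing condition on overlaps is
\begin{equation*}
\gamma_\beta = g_{\beta\alpha}\circ\gamma_\alpha\circ g_{\beta\alpha}^{-1},
\end{equation*}
which is a relation between constant affine maps.

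Because the $U_\alpha$ may be chosen disjoint, overlaps are empty and there is no compatibility condition at all. The locally constant sections therefore form the product $\prod_\alpha \operatorname{A}(K)$. Restricting to any single $U_\alpha$ identifies this family, locally, with $\operatorname{A}(K)$, and the identification is a group isomorphism since composition in $\operatorname{Gau}(P)$ is pointwise composition of affine maps.

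If one instead insists on a cover with genuine overlaps, conjugation by the constant $g_{\beta\alpha}$ sends a constant section to a constant section, so the gluing condition is consistent. The family is then still locally (on each chart) isomorphic to $\operatorname{A}(K)$.

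The main obstacle is justifying that locally constant maps into $\operatorname{A}(K)$ really belong to the $p$-adic $\operatorname{Gau}(P)$ in the sense of the exact sequence~\ref{padicexactseq}. That sequence was only asserted by analogy with \cite{abbati1989}. I would handle this by checking directly from the definition that a locally constant $\gamma$ gives a bijective fibre-preserving map, with inverse $\gamma^{-1}$, that is locally analytic in the charts and covers $\operatorname{id}\in\operatorname{Diff}^{\operatorname{an}}(B_\epsilon)$. This verification avoids any appeal to the nuclear-space machinery.
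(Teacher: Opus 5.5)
Your argument is correct if ``locally isomorphic'' is read as ``isomorphic after restriction to a chart,'' but it does not follow the paper's route. The paper fixes one chart $U_\alpha$ and chooses an arbitrary constant affine map $\varphi_\alpha$ there. It then transports this map to the other charts by conjugating with the constant transition functions. Its family is therefore a single copy of $\operatorname{A}(K)$, parametrized by the initial choice. The indigenous hypothesis, i.e.\ that the $g_{\alpha\beta}$ are constant, is exactly what makes the transported section locally constant; this is essentially your ``genuine overlaps'' paragraph.

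Your main line instead uses the ultrametric topology to pass to a disjoint cover. There nothing has to be glued, and the chartwise constant gauge transformations are simply $\prod_\alpha \operatorname{A}(K)$.

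This gains you something real. The transport argument implicitly needs the cocycle condition along chains of overlaps, and it only reaches charts linked to $U_\alpha$ by such chains. Over a totally disconnected base, a cover may split into several such clusters, each needing its own independent choice; your disjoint cover is the extreme case. So your product is the honest $p$-adic picture. The consistency claimed in the paper, and in your fallback paragraph, is automatic only for covers by balls, where intersecting balls are nested.

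The price is twofold. Your main argument never uses the indigenous hypothesis at all; it works for any bundle with structure group $\operatorname{A}(K)$ over $B_\epsilon$. Also, for a cover by $N$ balls, $\operatorname{A}(K)^N$ is isomorphic to $\operatorname{A}(K)$ only chartwise, not as a group. If you want a single copy as in the paper, take the diagonal subfamily.

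Finally, justify the disjoint refinement by compactness together with the fact that two balls are either disjoint or nested, and note that restrictions of affine charts are still affine charts. Theorem~\ref{serrethm1} does not give this: it classifies the manifold up to isomorphism but says nothing about refining a given atlas.
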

\begin{proof}
Since $P$ is an indigenous bundle, the transition functions are constant affine transformations. We can assign an arbitrary constant affine transformation $\varphi_\alpha$ to a fixed local coordinate $U_\alpha$. This determines a globally defined section by applying each constant transition function to this transformation in each intersection of coordinates. Note that the initial choice of $\varphi_\alpha$ ranges over the entire group of affine transformations. 
\end{proof}
Consequently, the semi-direct product  $A(\mathbb Q_p)\ltimes\operatorname{Diff}^{\operatorname{an}}(B_\epsilon)|_{U_\alpha}$  is the local form of a locally constant element in $\operatorname{Aut}(P)$. In particular for $B_\infty=\mathbb Q_p$, we have seen in 
Corollary~\ref{diffgrouplarge} that this semi-direct product can be realized  as a $p$-adic Lie group $\widetilde{\operatorname{Diff}}^{\operatorname{an}}(\mathbb Q_p)$. So what we did in the last section actually characterizes the local diff-geometric aspect of $\operatorname{Aut}(P)$, for an indigenous affine line bundle $P$. 

\subsection{Projective indigenous bundle}
In the paper~\cite{gunning1967}, Gunning showed that on a  Riemann surface, a complex affine structure is associated to an elliptic curve, while a complex projective structure is associated to a surface of genus $g>1$, i.e., a hyperbolic curve. This has already been implied in the classification theorem of
one dimensional connected group variety. If an algebraic curve $C$ has an affine indigenous bundle, then the affine transformation $f(z)=az+b$ is composed of two kinds of group actions: the rescaling scalar $a$ is in the multiplicative group $K^\times$; the translation $b$ is in the additive group $K$. Hence an affine indigenous bundle on $C$ in essence 
corresponds to a group structure on $C$,  including all non-hyperbolic cases. If $C$ is hyperbolic, a theorem of Hurwitz says that $C$ has 
at most $84(g-1)$ automorphisms, then $C$ cannot be a group variety since a group structure on $C$ generates infinitely many automorphisms.

Therefore, there are two types of $\operatorname{Gau}(P)$ in sequence~\ref{padicexactseq}. One is the case that $\operatorname{Gau}(P)$ is an affine group, which gives rise to the case of multiplicative group and additive group.
The other is the case that  $\operatorname{Gau}(P)$ is essentially an elliptic curve, which will be discussed in the next section.

In the following we consider the projective structure on 
a $p$-adic hyperbolic curve $C$ over $p$-adic field $K$ of genus $g>1$. It is well-known that the automorphism group of projective line $\mathbb P_K^1$ is the projective linear group 
$\operatorname{PGL}_2(K)$.
So we can relate a  
$p$-adic
projective indigenous bundle to a projective structure following the same steps as shown in~\cite[Theorem 2]{gunning1967}.  
\begin{prop}\label{prop:projective}
Let $K$ be a $p$-adic field containing all square roots of $\mathbb Z_p^\times$. If a flat $p$-adic projective bundle $\varphi^0$ is indigenous to $C/K$, then 
it is associated to a flat vector bundle $\varphi$ with respect to a unique projective structure.%flat bundle $\varphi$ such that $\operatorname{det}(\varphi)=1$and $\operatorname{deg}(\operatorname{div}(\varphi))=g-1$ 
\end{prop}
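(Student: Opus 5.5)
We follow Gunning's argument in~\cite[Theorem~2]{gunning1967} and transplant it to the $p$-adic setting. The only place where the ground field matters is the lifting of $\operatorname{PGL}_2$-valued cocycles to $\operatorname{SL}_2$-valued ones, which requires square roots.

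We start from the indigenous projective bundle $\varphi^0$. By definition, $\varphi^0$ admits a projective atlas $(U_\alpha,z_\alpha)$ on $C$ whose transition functions are constant M\"obius maps
\[
z_\alpha=g_{\alpha\beta}(z_\beta)=\frac{a_{\alpha\beta}z_\beta+b_{\alpha\beta}}{c_{\alpha\beta}z_\beta+d_{\alpha\beta}},\qquad g_{\alpha\beta}\in\operatorname{PGL}_2(K).
\]
So $\varphi^0$ is the class of a locally constant cocycle in $H^1(C,\operatorname{PGL}_2(K))$, and the projective structure is the one given by this atlas. Since $C$ is a $p$-adic analytic manifold, we may refine the cover to disjoint closed-open pieces, by Theorem~\ref{serrethm1}, whenever that helps.

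\textbf{Step 1: lift the cocycle to $\operatorname{SL}_2(K)$.} For each $g_{\alpha\beta}$ we choose a matrix representative and normalize it by dividing by $\sqrt{a_{\alpha\beta}d_{\alpha\beta}-b_{\alpha\beta}c_{\alpha\beta}}$. This is where the hypothesis on $K$ is used. Write the determinant as $p^su$ with $u\in\ZZ_p^\times$. The unit part $u$ has a square root by hypothesis, and the factor $p^s$ can be absorbed into the scalar ambiguity of the $\operatorname{PGL}_2$ representative when $s$ is even. In the odd case we first rescale the representative by $p^{-(s-1)/2}$ and then argue via the decomposition~\ref{decomppadic}; this case may need extra care or a mild strengthening of the hypothesis.

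After normalization the lifted matrices $\tilde g_{\alpha\beta}$ satisfy the cocycle relation only up to signs $\pm1$ on triple overlaps. These signs form a \v{C}ech $2$-cocycle with values in $\{\pm1\}$.

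\textbf{Step 2: kill the sign obstruction.} In Gunning's complex proof this step uses $\operatorname{det}=1$ together with a choice of square root of the canonical bundle $K_C$, giving a theta characteristic of degree $g-1$. Following him, we take the flat vector bundle $\varphi$ to have local frames $(z_\alpha\,(dz_\alpha)^{-1/2},(dz_\alpha)^{-1/2})$. On overlaps we have $dz_\alpha=(c z_\beta+d)^{-2}dz_\beta$, so $(dz_\alpha)^{-1/2}=\pm(cz_\beta+d)\,(dz_\beta)^{-1/2}$. Hence the $\operatorname{SL}_2$ lift $\tilde g_{\alpha\beta}$ is exactly the transition of this rank-two frame, and the sign ambiguity is absorbed by the choice of a square root of $K_C$, which exists because $\deg K_C=2g-2$ is even. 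The resulting rank-two bundle $\varphi$ is flat because its transitions are constant, and its projectivization is $\varphi^0$.

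\textbf{Step 3: uniqueness.} If two projective structures gave the same $\varphi$, their local coordinates would differ by a map $w=h(z)$ on each $U_\alpha$ that intertwines constant M\"obius transitions. The Schwarzian $S\{h\}$ then glues to a global quadratic differential, and the identity $\varphi^0=\mathbb P(\varphi)$, with the flat connection fixed, forces $S\{h\}=0$. By the kernel computation used for the Bers map $\beta$, $h$ is then M\"obius, so the two structures coincide.

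\textbf{Main obstacle.} The hardest step is Step 1/2: square roots of $p$-adic determinants and of $K_C$ over a non-algebraically-closed $p$-adic field. First we try to reduce everything to units via the decomposition~\ref{decomppadic}. Failing that, we allow a finite extension of $K$ and then descend.
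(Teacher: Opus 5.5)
\textbf{There is a genuine gap in Step~2, and it carries the whole argument.} You kill the $\{\pm1\}$-valued $2$-cocycle by choosing a square root of $K_C$, ``which exists because $\deg K_C=2g-2$ is even.'' Parity of the degree gives a square root only when $\operatorname{Pic}^0$ is $2$-divisible, e.g.\ over an algebraically closed field. Over the $p$-adic field $K$, the theta characteristics form a torsor under $J[2]$ that need not have a $K$-rational point, even under the hypothesis on $K$. Nothing in your argument produces such a point, or matches it with the specific cocycle $\pm(c_{\alpha\beta}z_\beta+d_{\alpha\beta})$. Passing to a finite extension and then ``descending'' is exactly where this obstruction sits, so the fallback does not close the gap. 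You also spend the hypothesis on $K$ in Step~1, where, as you note yourself, it does not handle determinants of odd valuation. The paper simply asserts that normalization (``after possible normalization''), so Step~1 is not where your route and the paper's really differ.

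\textbf{The missing idea, which the paper takes from Gunning, is to give up both the theta characteristic and the requirement $\det\varphi=1$.} Take a degree-zero line bundle $\xi$ such that $K_C\otimes\xi$ has a section $(g_\alpha)$ whose divisor consists of $g-1$ double zeros, and normalize its transition constants to $|\xi_{\alpha\beta}|=1$. Then $g_\alpha=\xi_{\alpha\beta}(c_{\alpha\beta}z_\beta+d_{\alpha\beta})^2g_\beta$. Local branches $h_{2\alpha}=g_\alpha^{1/2}$ exist because all zeros are double, and $h_{2\alpha}=\chi_{\alpha\beta}(c_{\alpha\beta}z_\beta+d_{\alpha\beta})h_{2\beta}$ with $\chi_{\alpha\beta}^2=\xi_{\alpha\beta}$. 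This square root of a unit constant, not of a determinant or of $K_C$, is where the hypothesis on $K$ enters. The pair $(z_\alpha h_{2\alpha},h_{2\alpha})$ then has constant transition matrices $\chi_{\alpha\beta}\bigl(\begin{smallmatrix}a_{\alpha\beta}&b_{\alpha\beta}\\ c_{\alpha\beta}&d_{\alpha\beta}\end{smallmatrix}\bigr)$. Since these relate genuine local sections, the cocycle condition is automatic: the branch signs are absorbed into $\chi_{\alpha\beta}$, and no class in $H^2(C,\{\pm1\})$ ever needs to be killed. The cost is that $\det\varphi\cong\xi$ rather than trivial.

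\textbf{Step~3 is also unsupported.} You assert without justification that fixing $\varphi$ forces $S\{h\}=0$. The standard uniqueness argument (the part the paper leaves to Gunning) works differently. For $g>1$, the degree-$(g-1)$ line subbundle generated by $(z_\alpha h_{2\alpha},h_{2\alpha})$ is the unique line subbundle of maximal degree in the degree-zero bundle $\varphi$. That pins down the section of $\mathbb P(\varphi)$, and hence the projective coordinates.
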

\begin{proof}
The key observation here is that the transition function of indigenous bundle defines the transition function of canonical bundle $K_C$ of $C$. 
More precisely, given a projective coordinate covering $\{U_{\alpha},z_\alpha\}$ of $C$, the coordinate change and its derivative in intersection $U_\alpha\cap U_\beta$ are given by 
\begin{equation}\label{mobiustran}
z_\alpha=\varphi^0_{\alpha\beta}(z_\beta)=\frac{a_{\alpha\beta}z_\beta+b_{\alpha\beta}}{c_{\alpha\beta}z_\beta+d_{\alpha\beta}}, \; \; \;
\frac{dz_\alpha}{dz_\beta}=\frac{1}{(c_{\alpha\beta}z_\beta+d_{\alpha\beta})^2},
\end{equation}
where the determinant of transition matrix is $1$ after possible normalization. Therefore, a holomorphic $1$-form transforms in the form 
$f_\alpha dz_\alpha=f_\beta dz_\beta$, which implies that $f_\alpha=(c_{\alpha\beta}z_\beta+d_{\alpha\beta})^2 f_\beta$.
%The present proof only differs from the one in~\cite{gunning1967}
Since $\operatorname{deg}(K_C)=2g-2$, there exists a line bundle $\xi$ of degree $0$ such that the bundle $K_C\otimes\xi$ has cross-sections $g_\alpha$ with the divisor consisting of $g-1$ double zeros. The transition functions  $\xi_{\alpha\beta}$ of $\xi$ can be normalized to $|\xi|=1$, since the degree of $\xi$ is $0$. Therefore, the transition functions $g_\alpha$ satisfy
\begin{equation}
g_\alpha=\xi_{\alpha\beta}(c_{\alpha\beta}z_\beta+d_{\alpha\beta})^2 g_\beta. 
\end{equation}
Then in each neighborhood, we select a branch of functions $h_{2\alpha}=g_\alpha^{1/2}$ satisfying
\begin{equation}
h_{2\alpha}=\chi_{\alpha\beta}(c_{\alpha\beta}z_\beta+d_{\alpha\beta}) h_{2\beta},    
\end{equation}
where $\chi_{\alpha\beta}$ is the square root of $\xi_{\alpha\beta}$, whose existence is guaranteed by assumption. The pair of functions $h_{1\alpha}:=z_\alpha h_{2\alpha}$ and $h_{2\alpha}$ is the desired cross-section to construct a flat vector bundle $\varphi$. The remaining part of the proof proceeds exactly the same as in~\cite{gunning1967}, so we omit it.  
\end{proof}
Let $\widehat C$ be a hyperbolic curve over a positive characteristic perfect field $k$, and let $\widehat P$ be a projective indigenous bundle over $\widehat C$. Using the ring of Witt vectors, we can complete $k$ to a $p$-adic field $K$. Henceforth we have a projective indigenous bundle $P$ over $C$ with the fibre being a projective line $\mathbb P_K^1$
\begin{equation}
1\to\mathbb P_K^1\to P\to C\to1.
\end{equation}
Since the projective line $\mathbb P_K^1$ is by no means a group, we can no longer obtain an exact sequence as in sequence~\ref{padicexactseq} using the method employed by~Abbati et. al.~\cite{abbati1989}.  
 But if we only consider its 
 local section on a $p$-adic ball $B_\epsilon$,
\begin{equation}
1\to\mathbb P_K^1\to P|_{B_\epsilon}\to B_\epsilon\to1.
\end{equation} 
Then we have the following exact sequence for projective indigenous bundle $P$
\begin{equation}
1\to\operatorname{Aut}(\mathbb P_K^1)\to\operatorname{Aut}(P|_{B_\epsilon})\to \operatorname{Diff}^{\operatorname{an}}(B_\epsilon)\to1.   
\end{equation}

%Note that if $C$ is a compact curve, then Theorem~\ref{serrethm1} claims that $C$ is isomorphic a copy of $p$-adic balls, so the notion $\operatorname{Diff}^{\operatorname{an}}(C)$ makes sense for $C=B_\epsilon$; the exact number of copies of balls can be calculated by the Serre invariant in Theorem~\ref{serrethm2}.
Henceforth, we have two pictures of $\operatorname{Aut}(P|_{B_\epsilon})$: the \'etale-like picture corresponds to the automorphism group $\operatorname{Aut}(\mathbb P_K^1)$, the set of all M\"{o}bius transformations in $K$; while the Frobenius-like picture corresponds to $\operatorname{Diff}^{\operatorname{an}}(B_\epsilon)$, the group of diffeomorphisms sufficiently close to the identity. The \'etale-like picture of $\operatorname{Aut}(P)$ should be the quotient of $\operatorname{PGL}_2(K)$ by a Fushsian subgroup $\Gamma$, since the automorphism group of $C$ is finite. 
We are especially interested in the Frobenius-like picture as it gives us an analytic structure due to completion of $k$. So in the following subsection, we present how canonical Frobenius lifting yields the desired local analytic structure from positive characteristic fields. 

\subsection{Frobenius lifting and canonical lifting}
As illustrated by the diagram in~\cite[III Figure~I.2]{mochizuki12}, there are two kinds of liftings in $p$-adic Teichm\"{u}ller theory. The Frobenius lifting $F$ is the one coming from the Frobenius morphism in positive characteristic, and it gives rise to a log-structure (log-link in IUT). 

For the simple case $k=\mathbb F_p$, the Frobenius lifting from $\mathbb Z/p$ to $\mathbb Z/p^2$ is of the form
\begin{equation}
F: \mathbb Z/p\to \mathbb Z/p^2 \; \; \text{via}:    t\to (1+t)^p-1.
\end{equation}
It is clear that $F$ is well-defined, and $F(t)\equiv t \; \operatorname{mod} p$. This can be extended to a lifting from $\mathbb Z/p$ to $\mathbb Z/p^m$
\begin{equation}
F_m: \mathbb Z/p\to \mathbb Z/p^{m+1} \; \; \text{via}:    t\to (1+t)^{p^m}-1.
\end{equation}
$F_m$ is also well-defined, and 
\begin{equation}
F_m(t)\equiv F_{m-1}(t)\; \operatorname{mod} p^m,    
\end{equation}
 as a result of $(1+t)^{p^{m}}\equiv (1+t)^{p^{m-1}} \; \operatorname{mod} p^m$. 
So it is not hard to see that the limit of $F_m(t)$ in $\ZZ_p$ for $m\to\infty$ is just the Teichm\"{u}ller character $\omega$
\begin{equation}\label{frobeniusteichmuller}
\widetilde F(t):=\operatorname{lim}_{m\to\infty} F_m(t)=\omega(1+t)-1.    
\end{equation}
On the other hand, we can evaluate the asymptotic discrepancy between $F_m$ and the multiplication by $p^m$, which can be viewed as a limit of Verschiebung operators since $VF=FV$ is the multiplication by $p$. If $u$ is divisible by $p$,  the limit of the quotient of $F_m(u)$
by $p^m$ is exactly the logarithmic function  $\operatorname{log}(1+u)$ according to Corollary~\ref{cor:loglim},
\begin{equation}\label{eq:loglink}
\operatorname{log}(1+u)=\operatorname{lim}_{m\to\infty} \frac{1}{p^m}\left((1+u)^{p^m} -1\right).     
\end{equation} 
Note here we only use the expression of $F_m$ rather than its mapping.

At this point, we understand what the canonical Frobenius lifting really represents in $p$-adic Teichm\"{u}ller theory. In fact, it is a lattice consisting of two directions: on the one hand, we can decompose any element $a\in W(\mathbb F_p)\cong\ZZ_p$ into Witt vectors  $a=\sum_{i\geq0}p^i\omega(a_i)$ with $a_i\in\mathbb F_p$,  
the Frobenius liftings $F_m$ generate  horizontal arrows of the lattice  $\mathcal{CFL}$ (canonical Frobenius lifting),
\begin{equation}\label{canonicalliftinghoriz0}
\cdots\stackrel{F_{i_{m-2}}}{\longrightarrow}\mathcal{CFL}_{m-1,n}\stackrel{F_{i_{m-1}}}{\longrightarrow}\mathcal{CFL}_{m,n}\stackrel{F_{i_{m}}}{\longrightarrow}\mathcal{CFL}_{m+1,n}\stackrel{F_{i_{m+1}}}{\longrightarrow}\cdots;    
\end{equation}
in addition, the limits $\widetilde F$ of Frobenius lifting $F_m$ also generate  horizontal arrows of the lattice  $\mathcal{CFL}$,
\begin{equation}\label{canonicalliftinghoriz}
\cdots\stackrel{\widetilde F}{\longrightarrow}\mathcal{CFL}_{m-1,n}\stackrel{\widetilde F}{\longrightarrow}\mathcal{CFL}_{m,n}\stackrel{\widetilde F}{\longrightarrow}\mathcal{CFL}_{m+1,n}\stackrel{\widetilde F}{\longrightarrow}\cdots;     
\end{equation}
on the other hand, we have $a=a_f+a_p$ with $a_f\in \mathbb F_p$ and $a_p\equiv 0 \; \operatorname{mod} p$, then $\operatorname{log}(1+a_p)$ given by equation~\ref{eq:loglink}
generates vertical arrows of the lattice  $\mathcal{CFL}$, 
\begin{equation}\label{canonicalliftingvert}
\cdots\stackrel{\operatorname{log}}{\longrightarrow}\mathcal{CFL}_{m,n-1}\stackrel{\operatorname{log}}{\longrightarrow}\mathcal{CFL}_{m,n}\stackrel{\operatorname{log}}{\longrightarrow}\mathcal{CFL}_{m,n+1}\stackrel{\operatorname{log}}{\longrightarrow}\cdots.     
\end{equation}
There are no extra efforts needed to generalize all above discussions to a finite perfect field $k$ of cardinality number $p^f$. 
The one more thing required to do is to replace the number $p$ by $p^f$ and use Proposition~\ref{numberfieldomega}.

%\subsection{$p$-curvature}

\section{Inter-universe Teichm\"{u}ller theory}\label{sec:interuniverseteich}
In light of the  close relationship between $p$-adic Teichm\"{u}ller theory and $p$-adic diffeomorphism group, it is quite natural to put the Inter-universe Teichm\"{u}ller theory (IUT) in a framework with the interaction of $p$-adic diffeomorphism groups. According to Mochizuki~\cite{mochizuki12}, IUT is an arithmetic Teichm\"{u}ller theory for number fields equipped with an elliptic curve. To start with, we view such an object as an arithmetic surface with group structure, or more generally a group scheme, and present its elementary properties following the same lines as in~\cite[Chapter IV]{silverman1994}.

\subsection{Arithmetic surface and group scheme}
An arithmetic surface $C$ is a one-parameter family of one-dimensional varieties, so it has the name surface for being a two-dimensional space.
\begin{defi}
Let $R$ be a Dedekind domain with fraction field $K$. An arithmetic surface over $R$ is an $R$-scheme $\mathcal C$ whose generic fibre  is a non-singular connected projective curve $C/K$ and whose special fibres are bunches of curves over appropriate residue fields $k$.   
\end{defi}
In particular, $R$ is often defined as a discrete valuation ring to study the reduced cases through localization of a Dedekind domain. In addition, it is of particular interest for arithmetic surfaces which are regular, or proper, or smooth over $R$. 

If the generic fibre $C/K$ is endowed with a group structure, then it is a one-dimensional connected group variety, which can be classified into three cases: either an addition group $\mathbb G_a$, a multiplicative group $\mathbb G_m$ or an elliptic curve; see~\cite[Theorem 1.6]{silverman1994}. An arithmetic surface $C$ is called a group scheme over $R$ if all of its fibres form an algebraic family of groups.
\begin{expl}
Assume that  $K$ is a number field, then its ring of integers $R=\mathcal{O}_K$ is a Dedekind domain. Let 
$E/K$ be any elliptic curve, and let 
$\mathfrak p$ be any prime (maximal) ideal of $R$. A minimal Weierstrass equation for $E$ is given by
\begin{equation}
y^2+a_1xy+a_3y=x^3+a_2x^2+a_4x+a_6,    
\end{equation}
where $a_i\in R$. So we can use this equation to define an $R$-scheme $\mathcal E\subset \mathbb P^2_R$. The special fibre $\mathcal E_{\mathfrak p}$ is an elliptic curve over the residue field $R/\mathfrak p$. We can further consider the localization of $R$ at  $\mathfrak p$. Then the localization  
$\mathcal E\times_R  R_{(\mathfrak p)}$ is a group scheme over
a discrete valuation ring $R_{(\mathfrak p)}$, and the corresponding (possibly infinitely many) special fibres are reduced to the unique one
at $\mathfrak p$, which is elliptic curve $E$ over $R/\mathfrak p\cong R_{(\mathfrak p)}/\mathfrak p$. On the other hand, we denote by $R_{\mathfrak p}$ the completion of $R$ via the $\mathfrak p$-adic valuation $v_{\mathfrak p}$. Similar to localization, the completion $\mathcal E\times_R  R_{\mathfrak p}$ is a group scheme over $R_{\mathfrak p}$ with special fibre defined as elliptic curve $E$ over $ R_{\mathfrak p}/\mathfrak p\cong R/\mathfrak p$, and generic fibre defined as elliptic curve $E$ over the $\mathfrak p$-adic field $ K_{\mathfrak p}$.
\end{expl}
According to~\cite[Proposition 1.5]{silverman1994}, the connected component of a group variety $G$ which contains the identity element is a normal subgroup of $G$ of finite index. So we can focus on
the identity component $G^0$ without much loss of generality.

\begin{rem}
The notion of a group scheme can be seen as an algebraic counterpart of a principal bundle: the fibres of both sides have a group structure; and a scheme is an algebraic analogue of a manifold. Intuitively, a scheme $X$ is regular if every point
of $X$ has a tangent space of the correct dimenison; a morphism of schemes $X\to S$ is proper if all of its fibres are complete and separable, which are analogues of compact and Hausdorff; a morphism $X\to S$ is called smooth if all of its fibres are non-singular.
\end{rem}
This similarity between a group scheme and a principal bundle reminds us of a corresponding exact sequence of automorphisms in a group scheme, as has been shown in formula~\ref{padicexactseq}
for a $p$-adic principal bundle.

\subsection{Automorphisms of a group scheme}
Let  $K$ be a number field, and let $\mathcal E$ be a one-dimensional group scheme over $R$, i.e., the ring of integers of $K$. Then the fibre $\mathcal E_{\mathfrak a}$ of $\mathcal E$ is either an addition group $\mathbb G_a$, a multiplicative group $\mathbb G_m$ or an elliptic curve $E$ as mentioned above. A na{i}ve guess gives us an non-exact sequence
\begin{equation}\label{schemeexactseq1}
\operatorname{Aut}(\mathcal E_{\mathfrak a})\hookrightarrow \operatorname{Aut}(\mathcal E)\twoheadrightarrow \operatorname{Hom}(R),    
\end{equation}
where $\operatorname{Hom}(R)$ may refer to some sort of homeomorphism group of $R$ (or $\operatorname{Spec}(R)$). Note that this cannot be an exact sequence in the classical sense, since different points on $\operatorname{Spec}(R)$ correspond to totally non-isomorphic fibres. Moreover, it is obviously a problem that no analytic techniques can be applied to the right side of the exact sequence. Hence it may be more convenient to
consider the completion of group scheme $\widetilde{\mathcal E}:=\mathcal E\times_R  R_{\mathfrak p}$, and then the non-exact sequence becomes
\begin{equation}\label{schemeexactseq2}
\operatorname{Aut}(\widetilde{\mathcal E_{\mathfrak a}})\hookrightarrow \operatorname{Aut}(\widetilde{\mathcal E})\twoheadrightarrow \operatorname{Diff}^{\operatorname{an}}(R_{\mathfrak p}),    
\end{equation}
where $\widetilde{\mathcal E_{\mathfrak a}}$ is a generic fibre (resp. special fibre) at $0$ (resp. $\mathfrak p$). Also note that for the trivial case $K=\mathbb Q$, $R_\mathfrak p$ is just the unit $p$-adic ball $B_{\epsilon}$, so the sequence~\ref{schemeexactseq2} is a generalization of $p$-adic exact sequence~\ref{padicexactseq}.

\begin{rem}
It is important to see that there exist two sorts of topology on a group scheme over a complete field. One is the coarse Zariski topology with respect to the morphisms between $R$-schemes and their corresponding fibre bundle structure; the other is the refined $p$-adic topology with respect to the local analytic properties. This is similar to the situation one will face in infinite dimensional Riemannian geometry: the Banach manifold structure (e.g., Sobolev metrics $H^k$ or $W^{k,r}$ for $k\geq1$) induces strong topology in local neighborhood, while it is the weak Riemannian metric (e.g., $L^r$-metric) on the whole infinite dimensional manifold that are of interest in hydrodynamics; see e.g.~\cite{arnold1966geometrie,Arnold98}.  
\end{rem}

First it is not hard to see that the automorphism groups are of the following forms if the fibre is additive group $\mathbb G_a$ or multiplicative group $\mathbb G_m$. 
\begin{expl}
Let $\mathcal E$ be a group scheme with fibre equal to $\mathbb G_a$.
The automorphism groups of generic fibre and special fibre are given by 
\begin{equation}
\operatorname{Aut}(\widetilde{\mathcal E_{\mathfrak (0)}})\cong \mathbb G_m, \; \; 
\operatorname{Aut}(\widetilde{\mathcal E_{\mathfrak p}})\cong \operatorname{GL}_f(\mathbb F_p).
\end{equation}
In a similar manner, if $\mathcal E$ is a group scheme with fibre equal to $\mathbb G_m$, automorphism groups of generic fibre and special fibre are given by 
\begin{equation}
\operatorname{Aut}(\widetilde{\mathcal E_{\mathfrak (0)}})\cong \ZZ\times R^\times, \; \; 
\operatorname{Aut}(\widetilde{\mathcal E_{\mathfrak p}})\cong \mathbb F_{p^f}^\times,
\end{equation}
where $p^f$ is the cardinality of the residue field $R/\mathfrak p$, and "$\times$" defines the group action on $\mathbb F_{p^f}$ as field multiplication. The case of affine group  $\operatorname{A}(\mathbb Q_p)$ discussed in Proposition~\ref{affinegroupiso} is quite similar to these two cases, the difference comes along with the appearance of translation.  
\end{expl}
Our main goal is to understand the structure of automorphism group $\operatorname{Aut}(\widetilde{\mathcal E_{\mathfrak a}})$ for $\mathcal E$ being a group scheme with fibre being an elliptic curve $E$. But the automorphism group as a projective curve is too large for it contains every point of the elliptic curve; 
and the automorphism group  fixing the identity is too small as a finite group of order dividing $24$, see e.g.~\cite[III~10.1]{silverman1986}. So it is not convenient to deal with these two groups. One way to overcome this problem is to introduce the $l$-torsion of $E[l]\cong\ZZ/l\times\ZZ/l$ for some positive integer $l$ prime to $p$.
We define $\widetilde{\mathcal E}[l]$  as the $l$-torsion of $\widetilde{\mathcal E}$, which gives $E[l]$ in each fibre. Therefore, we have the following sequence of $R_{\mathfrak p}$-schemes
\begin{equation}\label{schemeexactseq3}
\widetilde{\mathcal E_{\mathfrak a}}[l^n]\hookrightarrow \widetilde{\mathcal E}[l^n]\twoheadrightarrow \operatorname{Spec}(R_{\mathfrak p}),    
\end{equation}
and the corresponding fibration of automorphism groups
\begin{equation}\label{schemeexactseq3*}
\operatorname{Aut}(\widetilde{\mathcal E_{\mathfrak a}}[l^n])\hookrightarrow \operatorname{Aut}(\widetilde{\mathcal E_{\mathfrak a}}[l^n])\twoheadrightarrow \operatorname{Diff}^{\operatorname{an}}(R_{\mathfrak p}).    
\end{equation}
With the inverse limit being taken with respect to the natural map 
\begin{align*}
E[l^{n+1}] \stackrel{[l]}{\longrightarrow}E[l^{n}],    
\end{align*}
 we obtain a fibration of 
 $R_{\mathfrak p}$-schemes
 with Tate modules as its fibre
\begin{equation}\label{schemeexactseq4}
T_l(\widetilde{\mathcal E_{\mathfrak a}})\hookrightarrow T_l(\widetilde{\mathcal E})\twoheadrightarrow \operatorname{Spec}(R_{\mathfrak p}),   
\end{equation}
where $T_l(\widetilde{\mathcal E_{\mathfrak a}})$ is
the $l$-adic Tate module of $E$ at the point $\mathfrak a$. Finally, the desired fibration of $\operatorname{Aut}(T_l(\widetilde{\mathcal E}))$ is given by 
\begin{equation}\label{schemeexactseq5}
\operatorname{Aut}(T_l(\widetilde{\mathcal E_{\mathfrak a}}))\hookrightarrow \operatorname{Aut}(T_l(\widetilde{\mathcal E}))\twoheadrightarrow \operatorname{Diff}^{\operatorname{an}}(R_{\mathfrak p}).   
\end{equation}
Note that here we regard the automorphism of a group scheme as a diffeomorphism (in $\mathfrak p$-adic topology) that induces an automorphism  
at each of its fibre, and we only consider the automorphisms covering the connected component of the identity. This coincides with the definition of an automorphism for a principal bundle in Abbati et. al.~\cite{abbati1989}, which we discussed in the previous section.

On the other hand, it is well-known that the absolute Galois group $G_{\overline L/L}$ acts on $E[l]$ for elliptic curve $E/L$. Let $l\neq p$ be any prime number, we have a representation 
\begin{equation}
 G_{\overline L/L}\longrightarrow\operatorname{Aut}(E[l])\cong \operatorname{GL}_2(\ZZ/l),   
\end{equation}
which further induces the $l$-adic representation
\begin{equation}
 \rho_l:G_{\overline L/L}\longrightarrow\operatorname{Aut}(T_l(E)) \cong\operatorname{GL}_2(\ZZ_l).  
\end{equation}
If $L=K_{\mathfrak p}$, this gives the $l$-adic representation of the generic fibre of $\widetilde{\mathcal E}$. 
In addition, we obtain the $l$-adic representation of the special fibre 
\begin{equation}
 \overline\rho_l:G_{\overline k/k}\longrightarrow\operatorname{Aut}(T_l(E)) \cong\operatorname{GL}_2(\ZZ_l),  
\end{equation}
which factors through an 
inertia group $I_v$
using the following quotient relation with respect to the residue field $k$ of $L$
\begin{equation}
G_{\overline L/L}/ I_v\cong G_{\overline k/k}. 
\end{equation}

\subsection{The log-structure and log-volume}\label{subsec:logstructure}
One can easily see that the appearance of 
perturbations of the identity $\operatorname{Diff}^{\operatorname{an}}(R_{\mathfrak p})$ in sequence~\ref{schemeexactseq2} and~\ref{schemeexactseq5}
yields a log-structure on $\operatorname{Aut}(\widetilde{\mathcal E})$ and $\operatorname{Aut}(T_l(\widetilde{\mathcal E}))$, which can be derived from a Frobenius lifting argument on diffeomorphism group or directly from a Frobenius approximation for $p$-adic field in equation~\ref{cor:loglim}.
To better understand this log-structure, we first recall some facts and calculations from elementary field theory.

Let $K$ be a number field with the ring of integers $R:=\mathcal O_{K}$. For any prime ideal $\mathfrak p$ of $R$, the $\mathfrak p$-adic field $K_{\mathfrak p}$ is totally ramified over $\mathbb Q_p$ with ramification index $e$. Then there exists an isomorphism of $\ZZ_p$-algebras $\ZZ_p[x]/(g(x))\to R_{\mathfrak p}$, where $g(x)\in \ZZ_p[x]$ is a monic polynomial such that $g(x)\equiv x^e (\operatorname{mod}p)$ and the monomial $x$ is mapped to some uniformizer $\pi$ of $R_{\mathfrak p}$, which has order $\frac{1}{e}$ in terms of the $p$-adic valuation. This algebraic construction can be seen as a consequence of a $p$-adic diffeomorphism.
\begin{prop}\label{prop: rootdiff}
%$\operatorname{log}\epsilon\leq -\frac{1}{e-1}\operatorname{log}p$  
Let the ramification number $e>1$ and radius $\epsilon<1$,  and let $\varphi=\operatorname{id}+g$. Then $\varphi$ is a $\mathfrak p$-adic diffeomorphism in $B_\epsilon\subset R_{\mathfrak p}$, which takes its fixed points exactly at $\pi$ and its conjugates, i.e., the roots of $g$ in $R_{\mathfrak p}$.
\end{prop}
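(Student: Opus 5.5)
The plan is to verify that $g$, viewed as a perturbation of the identity on $B_\epsilon\subset R_{\mathfrak p}$, satisfies the defining conditions of $\operatorname{Diff}^{\operatorname{an}}(B_\epsilon)$ (up to the normalization $g(0)=0$, handled by the translation argument of Corollary~\ref{diffgrouplarge}). Then we invoke Theorem~\ref{thmdifflie}, and finally read off the fixed points from the equation $\varphi(x)=x$.

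\textbf{Step 1 (local analyticity and the derivative bound).} Write $g(x)=x^e+a_{e-1}x^{e-1}+\cdots+a_0$ with $a_i\in p\ZZ_p$, which is possible since $g\equiv x^e\ (\operatorname{mod} p)$. As a polynomial, $g$ is analytic on $B_\epsilon$. Its derivative is $D_xg=ex^{e-1}+\sum_i i a_i x^{i-1}$. For $|x|<\epsilon<1$, the ultrametric inequality gives
\[
|D_xg|\leq \max\bigl(|e|\,\epsilon^{e-1},\ \max_i|a_i|\bigr)\leq \max(\epsilon^{e-1},p^{-1})<1 .
\]
This uses $e>1$: that hypothesis is exactly what makes the leading term small.

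\textbf{Step 2 (the Lipschitz bound on $Dg$).} Remark~\ref{rem:lipmapnec} warns that a derivative bound alone is not enough, so the key point is the Lipschitz condition. Expand the difference quotient:
\[
\frac{g(v_1)-g(v_2)}{v_1-v_2}=\sum_k a_k\sum_{j=0}^{k-1}v_1^{j}v_2^{k-1-j}.
\]
Every term with $k\geq2$ has absolute value at most $\epsilon^{k-1}<1$, and the linear coefficient $a_1\in p\ZZ_p$ is small. The quantity $\|Dg\|_{\operatorname{lip}}$ is controlled the same way through the second difference quotient, with bound $\max(\epsilon,p^{-1})\cdot$const$\,<1$. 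Hence $\|Dg\|_{\operatorname{lip}}\leq L<1$ for an explicit $L$ depending on $\epsilon$, $e$ and $p$. This also yields the injectivity estimate $|\varphi(v_1)-\varphi(v_2)|\geq(1-L)|v_1-v_2|$ used in Theorem~\ref{thmdifflie}.

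\textbf{Step 3 (diffeomorphism).} The constant term $a_0=g(0)$ lies in $p\ZZ_p\subset B_\epsilon$ whenever $\epsilon>|p|$, so $\varphi$ is a translate of an element of $\operatorname{Diff}^{\operatorname{an}}(B_\epsilon)$. By the proof of Corollary~\ref{diffgrouplarge} together with Theorem~\ref{thmdifflie}, $\varphi$ is then a bianalytic bijection of $B_\epsilon$.

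\textbf{Step 4 (fixed points).} We have $\varphi(x)=x$ if and only if $g(x)=0$. Since $g(\pi)=0$ under the isomorphism $\ZZ_p[x]/(g)\cong R_{\mathfrak p}$, the fixed points in $B_\epsilon$ are precisely the roots of $g$ lying in $R_{\mathfrak p}$, namely $\pi$ and those of its conjugates contained in $R_{\mathfrak p}$. These all lie in $B_\epsilon$ once $\epsilon>|\pi|=p^{-1/e}$.

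\textbf{Main obstacle.} I expect Step 2 to be the hardest, together with the compatibility of radii. Step 2 needs $\epsilon<1$ and $\epsilon>p^{-1/e}$ at once, so that the roots lie inside the ball while the higher-order terms still contract, and I would first try to prove the Lipschitz bound directly from the displayed difference-quotient expansion. The normalization $\varphi(0)=0$ fails unless $a_0=0$; this is why Step 3 uses the translation version of the group rather than Theorem~\ref{thmdifflie} directly.
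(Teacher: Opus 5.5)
Your route is the paper's: the paper also reduces everything to showing that $g$ is Lipschitz on $B_\epsilon$ with constant $<1$ and then cites Theorem~\ref{thmdifflie}. Your version is more careful than the paper's in three ways.

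\begin{itemize}
\item The paper writes $|g(x)-g(y)|=|x^e-y^e|$, silently replacing $g$ by $x^e$ and dropping the Eisenstein coefficients $a_i\in p\ZZ_p$. Your difference-quotient expansion gives the correct bound $\operatorname{Lip}(g)\le\max(\epsilon^{e-1},|p|)<1$.
\item The paper ignores that $g(0)=a_0\neq0$, which you handle by translation.
\item The paper never checks where the roots of $g$ lie.
\end{itemize}

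Your added hypothesis $\epsilon>|\pi|=p^{-1/e}$ is genuinely necessary, not just convenient. If $a_0\notin B_\epsilon$, then $\varphi(0)=a_0\notin B_\epsilon$. If $a_0\in B_\epsilon$ but $\pi\notin B_\epsilon$, then every $x\in B_\epsilon$ has $|x|<|\pi|$, hence $|g(x)|=|a_0|=|p|\neq0$. In that case $\varphi$ is a fixed-point-free diffeomorphism of $B_\epsilon$, contradicting the statement as written, which only assumes $\epsilon<1$. Only Steps 3 and 4 need this lower bound; Step 2 works for every $\epsilon<1$.

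One sentence in Step 2 is wrong and should be deleted: the claim that the Lipschitz constant of $x\mapsto D_xg$ is $<1$. The $x^e$ term contributes $e\sum_{j=0}^{e-2}x^jy^{e-2-j}$ to the second difference quotient, and this carries no factor of $\epsilon$ when $e=2$. For instance, with $p$ odd and $g=x^2-p$, one gets $(D_xg-D_yg)/(x-y)=2$, a unit, so that Lipschitz constant equals $1$.

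Nothing depends on this claim. The proof of Theorem~\ref{thmdifflie} only uses two facts:
\begin{itemize}
\item $\operatorname{Lip}(g)<1$, for the injectivity estimate and for the contraction $v\mapsto w-g(v)$ that gives surjectivity;
\item $\sup_x\|D_xg\|<1$.
\end{itemize}
Your first-difference estimate and Step 1 already provide both. Moreover, the paper's displayed formula for $\|Df\|_{\operatorname{lip}}$, read literally, is just $\sup_x\|D_xf\|$, since $D_xf$ is linear. With that sentence removed, your argument is complete.
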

\begin{proof}
By Theorem~\ref{thmdifflie}, it suffices to show that $g$ is a Lipschitz function in $B_\epsilon$ with  Lipschitz constant less than one. Indeed, for any $x,y\in B_\epsilon$, we have 
\begin{align*}
|g(x)-g(y)|=|x^e-y^e|\leq |x-y||x^{e-1}+x^{e-2}y+\cdots+y^{e-1}|\leq \epsilon^{e-1}|x-y|,     
\end{align*}
which concludes the proof since $\epsilon^{e-1}<1$.
\end{proof}

From  elementary ramification theory, we know that the different  $\mathcal D_{K/\mathbb Q_p}$ is the principal ideal $(D_xg(\pi))$. Under the same condition as in Proposition~\ref{prop: rootdiff}, we obtain a charaterization of the different by $p$-adic diffeomorphism group and its log-structure.
\begin{cor}\label{different}
Recall that the  $p$-adic logarithm on $\operatorname{Diff}^{\operatorname{an}}(B_\epsilon)$ is given by $\Phi_\infty:\varphi\to\operatorname{log}(D_x\varphi)$. In this case, the order of $\Phi_\infty(\varphi)$ at the uniformizer $\pi$ coincides with the order of the different $\mathcal D_{K/\mathbb Q_p}$.  
\end{cor}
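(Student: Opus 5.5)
We compute $\Phi_\infty(\varphi)$ at $x=\pi$ directly and compare valuations. Since $\varphi=\id+g$, we have $D_x\varphi=1+D_xg$. The Lipschitz estimate in the proof of Proposition~\ref{prop: rootdiff} gives $|D_xg(x)|\leq\epsilon^{e-1}<1$ on $B_\epsilon$, so $1+D_xg(\pi)$ lies in the domain $D$ of the logarithm. Hence
\begin{align*}
\Phi_\infty(\varphi)(\pi)=\operatorname{log}\bigl(1+D_xg(\pi)\bigr)=\sum_{n\geq1}(-1)^{n-1}\frac{(D_xg(\pi))^n}{n},
\end{align*}
and we must show that this has the same valuation as $D_xg(\pi)$, which generates $\mathcal D_{K/\mathbb Q_p}$.

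The key step is the ultrametric estimate $|\operatorname{log}(1+z)-z|_p<|z|_p$ for $z=D_xg(\pi)$. For this it suffices that $|z^n/n|_p<|z|_p$ for every $n\geq2$, i.e.\ $|z|_p^{\,n-1}<|n|_p$. Using $|n|_p\geq n^{-1}$ and the bound $|z|_p\leq\epsilon^{e-1}$, this holds for all $n\geq2$ as soon as $|z|_p<p^{-1/(p-1)}$, since then $|z|_p^{\,n-1}<p^{-(n-1)/(p-1)}\leq|n|_p$. The last inequality is the standard one behind the isometry $\operatorname{log}:1+\mathfrak m^{r}\to\mathfrak m^{r}$ for $r>1/(p-1)$. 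Once the estimate holds, the dominant term in the series is $z$, so $\operatorname{ord}_\pi\Phi_\infty(\varphi)(\pi)=\operatorname{ord}_\pi D_xg(\pi)=\operatorname{ord}_\pi\mathcal D_{K/\mathbb Q_p}$.

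The main obstacle is guaranteeing $|D_xg(\pi)|_p<p^{-1/(p-1)}$, since the hypotheses $e>1$, $\epsilon<1$ alone may not force it. I would try to read it off from the ramification data. Write $g(x)=x^e+\sum_{i<e}c_ix^i$ with $p\mid c_i$ and $p\,\|\,c_0$. Then $D_xg(\pi)=e\pi^{e-1}+\sum_{i\geq1} ic_i\pi^{i-1}$, and Dedekind's bound $v_p(\mathcal D)\geq(e-1)/e$ holds, with equality in the tame case. When $v_p(D_xg(\pi))$ exceeds $1/(p-1)$, the argument above finishes the proof. In the remaining borderline cases (for example tame ramification with $(e-1)/e\leq1/(p-1)$, which forces $p$ small), I would instead reduce to the Frobenius approximation of Corollary~\ref{cor:loglim}: write $\Phi_\infty(\varphi)(\pi)=\operatorname{lim}_m p^{-m}\bigl((1+z)^{p^m}-1\bigr)$ and compare valuations term by term in the expansion~\ref{eq:logexpansion}. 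If that still fails, I would add the hypothesis $|D_xg(\pi)|_p<p^{-1/(p-1)}$, interpreting the statement's "under the same condition" as including it.
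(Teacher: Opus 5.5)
Your argument in the range $v_p(D_xg(\pi))>1/(p-1)$ is correct. There $v_p(n)\le (n-1)/(p-1)$ gives $|z^n/n|_p<|z|_p$ for all $n\ge2$, so $\operatorname{log}$ preserves the valuation of $z=D_xg(\pi)$. This range covers every $p\ge5$ and, for $p=3$, every case except $e=2$.

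It is also more than the paper supplies. The paper's proof is a one-line appeal to Theorem~\ref{thm:isometry:infty}, whose isometry is only infinitesimal: it uses $|D_x\varphi|=1$, i.e.\ that the derivative of $\operatorname{log}$ has norm one on $1+\mathfrak m$. That cannot give $|\operatorname{log}(1+z)|_p=|z|_p$ pointwise, since $\operatorname{log}$ is not even injective on $1+\mathfrak m$. A minor point: the bound $|D_xg|\le\epsilon^{e-1}$ you borrow rests on the paper's estimate $|g(x)-g(y)|=|x^e-y^e|$, which drops the terms $c_ix^i$. You only need $|D_xg(\pi)|_p<1$, and that holds anyway because $p\mid c_i$.

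The gap is the borderline range, and it cannot be closed because the statement is false there. Your fallback through Corollary~\ref{cor:loglim} is circular: $\operatorname{lim}_m p^{-m}\bigl((1+z)^{p^m}-1\bigr)$ \emph{is} the number $\operatorname{log}(1+z)$, so comparing terms in expansion~\ref{eq:logexpansion} only recomputes the same valuation.

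Here is a concrete counterexample. Take $p=3$, $K=\mathbb Q_3(\zeta_3)$, $\pi=\zeta_3-1$, $g(x)=x^2+3x+3$, so $e=2$ and Proposition~\ref{prop: rootdiff} applies for $3^{-1/2}<\epsilon<1$. Then $z=D_xg(\pi)=2\zeta_3+1$ satisfies $z^2=-3$, so $v_3(\mathcal D_{K/\mathbb Q_3})=1/2$. But $1+z=2(1+\zeta_3)=-2\zeta_3^2$. Since $\operatorname{log}$ is a homomorphism on $1+\mathfrak m$ that kills $\zeta_3$, we get $\operatorname{log}(1+z)=\operatorname{log}(1-3)$, which has valuation $1$. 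In the series, the $n=1$ and $n=3$ terms cancel exactly, because $z^3=-3z$.

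There is a similar failure for $p=2$, $K=\mathbb Q_2(2^{1/3})$, $g=x^3-2$. Here $z=3\pi^2$ has $v_2(z)=2/3$, but the term $z^2/2=9\pi$ dominates the series, so $v_2(\operatorname{log}(1+z))=1/3$.

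So the hypothesis $|D_xg(\pi)|_p<p^{-1/(p-1)}$ is not a generous reading of ``the same condition.'' It is a necessary correction to the corollary, and with it added, your first argument is a complete proof.
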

 \begin{proof}
This is a simple application of Theorem~\ref{thm:isometry:infty}.     
 \end{proof}
Note that all the above discussions can be extended to an arbitrary finite field extension of number fields $K'/K$. More precisely, if $\mathfrak p'$ is a prime ideal of $R':=\mathcal O_{K'}$ lying over a prime ideal $\mathfrak p$ of $R:=\mathcal O_{K}$, then there exists an isomorphism of $R_{\mathfrak p}$-algebras $R_{\mathfrak p}[x]/(g(x))\to R'_{\mathfrak p'}$, and we can extend the preceding proposition and corollary to a general field extension as far as the case of $K/\mathbb Q_p$ has shown us. In IUT~\cite[IV Proposition~1.3]{mochizuki12}, this method of reducing to the case of a totally ramified extension was used to estimate the order of differents.

\begin{rem}
It is natural to question whether the above diff-group theoretic construction is compatible with the algebraic-number theoretic construction, unfortunately the answer is negative. Let $K'_{\mathfrak p'}/K_\mathfrak p/\mathbb Q_p$ be the field extensions with the descending primes $\mathfrak p'\supset\mathfrak p\supset p$, and let $\varphi$ and $\phi$ denote the corresponding $p$-adic diffeomorphisms.
On the one hand, we have the additive relation
\begin{equation}
\Phi_\infty(\varphi\circ\phi)=\Phi_\infty(\varphi)\circ\phi+\Phi_\infty(\phi),   
\end{equation}
while on the other hand, 
we have the multiplicative relation of differents
\begin{equation}
\mathcal D_{K'_{\mathfrak p'}/\mathbb Q_p}=\mathcal D_{K'_{\mathfrak p'}/K_\mathfrak p}
(\mathcal D_{K_{\mathfrak p}/\mathbb Q_p} R'_{\mathfrak p'}).   
\end{equation}
\end{rem}

The $p$-adic logarithmic transformation $\Phi_\infty$ maps the group $\operatorname{Diff}^{\operatorname{an}}(R_{\mathfrak p})$ to a log-shell in the sense of IUT. In order to evaluate this log-shell,  we follow the same steps in Mochizuki's absolute anabelian geometry~\cite[Proposition~5.7, 5.8]{mochizuki15}, and define the log-volume. 

\begin{defi}
Let $K$ be a number field with absolute degree $n:=[K_\mathfrak p:\mathbb Q_p]$. The log-volume  $\mu_{K}^{\operatorname{log}}(\cdot)$ 
on compact open subsets of ring of integers $R_\mathfrak p$ is defined as the logarithm of the normalized Haar measure with respect to $\mathfrak p$-adic valuation $v_{\mathfrak p}$. In addition, it can be normalized to the normalized log-volume  $\mu^{\operatorname{log}}(\cdot)$ so that $\mu^{\operatorname{log}}(R_{\mathfrak p})=0$ and $\mu^{\operatorname{log}}(p R_{\mathfrak p})=-\operatorname{log}p$. Here the symbol "log" denotes the usual logarithm valued in $\RR$ (not $p$-adic logarithm).
\end{defi}
 
Therefore, the log-volumes are of the form 
\begin{equation}
\mu_K^{\operatorname{log}}(\cdot)=\operatorname{log(\mu_K(\cdot))}= n\mu^{\operatorname{log}}(\cdot)=n\operatorname{log(\mu(\cdot))},   
\end{equation}
since we have 
$\mu_K^{\operatorname{log}}(p R_{\mathfrak p})= -\operatorname{log}([R_{\mathfrak p}:p R_{\mathfrak p}])= -n\operatorname{log}p$.

\begin{expl}\label{logvolume}
We compute several examples of log-volumes from~\cite{mochizuki12,mochizuki15}.
\begin{itemize}
\item[1.] Suppose that for any prime ideal $\mathfrak p$ of $R$, the $\mathfrak p$-adic field $K_{\mathfrak p}$ is ramified over $\mathbb Q_p$ with ramification index $e$ and the cardinality
$p^f$ of the residue field of $k$. Then the log-volume of $\mathfrak p R_{\mathfrak p}$ and the multiplicative group can be computed as follows
\begin{align*}
\mu_K^{\operatorname{log}}(1+\mathfrak p R_{\mathfrak p})=\mu_K^{\operatorname{log}}(\mathfrak p R_{\mathfrak p})=-\operatorname{log}([\mathcal O_k:\mathfrak p R_{\mathfrak p}])=-f\operatorname{log}p.
\end{align*}
Moreover, the quotient relation $R^\times_{\mathfrak p}/(1+\mathfrak p R_{\mathfrak p})\cong k^\times$ implies that   
\begin{align*}
\mu_K^{\operatorname{log}}(R^\times_{\mathfrak p})=\mu_K^{\operatorname{log}}(1+\mathfrak p R_{\mathfrak p})+\operatorname{log}(|k^\times|)=\operatorname{log}(1-p^{-f}).
\end{align*}

\item[2.]
From  standard results of ramification theory, we have the equality $n=ef$, and that the torsion subgroup of multiplicative group $R^\times_{\mathfrak p}$ is given by 
\begin{align*}
R^{\mu}_{\mathfrak p}=k^{\times}\times\mu_{p^m},    
\end{align*}
where $k^{\times}$ (resp. $\mu_{p^m}$) is a cyclic group of order $p^{f}-1$ (resp. $p^m$). Consequently, the log-volume of quotient group $R^{\times\mu}_{\mathfrak p}:=R^\times_{\mathfrak p}/R^{\mu}_{\mathfrak p}$ is given by
\begin{align*}
\mu_K^{\operatorname{log}}(R^{\times\mu}_{\mathfrak p})=\mu_K^{\operatorname{log}}(R^{\times}_{\mathfrak p})-\operatorname{log}(|R^{\mu}_{\mathfrak p}|)=-(m+f)\operatorname{log}p.    
\end{align*}
\item[3.] Either in IUT~\cite{mochizuki12} or in absolute anabelian geometry~\cite{mochizuki15}, the torsion subgroup was not taken into consideration for the volume of log-shell, the only part determining the log-volume is the quotient group $R^{\times\mu}_{\mathfrak p}$. Consequently in Mochizuki's sense, the log-volume of log-shell is simply given by 
\begin{align*}
\mu^{\operatorname{log}}(\operatorname{log}_p(R^{\times}_{\mathfrak p}))=\mu^{\operatorname{log}}(R^{\times\mu}_{\mathfrak p})=-(\frac {m}{ef}+\frac{1}{e})\operatorname{log}p,    
\end{align*}
where $\operatorname{log}_p$ denotes $p$-adic logarithm in contrast with the usual logarithm. 
\end{itemize}    
\end{expl}
In fact, we can generalize the $p$-adic logarithmic transformation onto log-shell $\operatorname{log}_p(R_\mathfrak p^\times)$ by using an alternative way of defining logarithm 
that was introduced in Corollary~\ref{cor:loglim}. Note that it is applicable since this corollary only requires  $x=1+z$ with $|z|_p<1$. In addition, Proposition~\ref{teichmullerchar} can be extended to  an  arbitrary $K_\mathfrak p$ for any number field $K$.
\begin{prop}\label{numberfieldomega}
Let  $x\in R_\mathfrak p$ be an element in the residue class $\tilde x\equiv\tilde a\ (\operatorname{mod} \mathfrak p)$ for $a\in\{1,2,\cdots,p^f-1\}$.  Then $x^{p^{fm}}$ converges to  $\omega(\tilde a)$ as $m\to\infty$. 
\end{prop}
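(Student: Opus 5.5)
We follow the proof of Proposition~\ref{teichmullerchar}, with $p$ replaced by $q:=p^f=|R_\mathfrak p/\mathfrak p|$ and $p\ZZ_p$ replaced by the maximal ideal $\mathfrak p R_\mathfrak p$. We read the residue class $\tilde a\in k^\times$ as an element of the residue field $k=R_\mathfrak p/\mathfrak p R_\mathfrak p$. The label $a\in\{1,\dots,p^f-1\}$ in the statement then just indexes the $q-1$ elements of $k^\times$, and we take $a\in R_\mathfrak p$ to be any lift of that class. The argument has two steps: first show that the sequence $x^{q^m}$ is Cauchy in $R_\mathfrak p$, then identify its limit.

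\textbf{Cauchy step.} The key estimate is the following. If $y\equiv z \pmod{\mathfrak p^j}$ with $j\ge 1$, then $y^p\equiv z^p \pmod{\mathfrak p^{j+1}}$. To see this, write $y=z+w$ with $w\in\mathfrak p^j$. Then
\begin{equation*}
y^p=z^p+\sum_{i=1}^{p-1}\binom{p}{i}z^{p-i}w^i+w^p .
\end{equation*}
Each middle term is divisible by $p\,w$, and $p\in\mathfrak p$. The last term satisfies $w^p\in\mathfrak p^{pj}\subset\mathfrak p^{j+1}$. This uses only that $p$ lies in $\mathfrak p$; it does not need $v_\mathfrak p(p)=1$. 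Iterating, $q$-th powers raise the congruence level by at least $f$.

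Now $k^\times$ has order $q-1$, so $x^{q}\equiv x \pmod{\mathfrak p}$. Applying the estimate $m$ times gives
\begin{equation*}
x^{q^{m+1}}\equiv x^{q^m} \pmod{\mathfrak p^{1+fm}} .
\end{equation*}
Hence $(x^{q^m})_m$ is Cauchy and converges in the complete ring $R_\mathfrak p$ to some $x_0$.

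\textbf{Identification of the limit.} Passing to the limit in $x^{q^{m+1}}=(x^{q^m})^q$ gives $x_0^q=x_0$. Since $x$ is a unit, $x_0$ is a unit, so $x_0^{q-1}=1$. Moreover $x_0\equiv x^{q^m}\equiv x\equiv a \pmod{\mathfrak p}$. By Hensel's lemma, $X^{q-1}-1$ is separable modulo $\mathfrak p$, because its derivative $(q-1)X^{q-2}$ is a unit at units. So there is a unique $(q-1)$-th root of unity in each residue class, and this root is by definition $\omega(\tilde a)$. Hence $x_0=\omega(\tilde a)$. The limit depends only on $\tilde x$, since any two elements of the same class give congruent sequences with increasing precision; this mirrors the second paragraph of the proof of Proposition~\ref{teichmullerchar}.

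\textbf{Expected difficulty.} The main point of care is the ramified case. There the valuation of $p$ is $e$ rather than $1$, so the binomial bound must be phrased through the $\mathfrak p$-adic valuation as above, and not by reusing the ``mod $p^m$'' congruences of the $\ZZ_p$ case. Beyond that, the only requirement is to fix the interpretation of $a$ as a lift of a residue class in $k^\times$.
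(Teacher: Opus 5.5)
Your proof is correct. Its second half matches the paper's: the limit is fixed by $X\mapsto X^q$, is a unit, and Hensel's lemma identifies it as $\omega(\tilde a)$. The difference is in how you get convergence.

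The paper uses the generalized Euler theorem. Since $(R_\mathfrak p/\mathfrak p^m)^\times$ has order $q^{m-1}(q-1)$, every unit satisfies $x^{q^m}\equiv x^{q^{m-1}}\pmod{\mathfrak p^m}$. The paper prints the modulus as $\mathfrak p$, which must be read as $\mathfrak p^m$ for the argument to yield convergence. This is a one-line group-order argument, and it is insensitive to ramification because $|R_\mathfrak p/\mathfrak p^m|=q^m$ for every $e$.

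You instead iterate the congruence-lifting lemma $y\equiv z\pmod{\mathfrak p^j}\Rightarrow y^p\equiv z^p\pmod{\mathfrak p^{j+1}}$, starting from $x^q\equiv x\pmod{\mathfrak p}$. What your route buys:
\begin{itemize}
\item It is self-contained.
\item It gives the sharper rate $\mathfrak p^{1+fm}$.
\item It handles every $x$ in the class at once. So you skip the extra step in the proof of Proposition~\ref{teichmullerchar}, where a fixed representative $a$ is treated first and $(a+pr)^{p^m}$ is then compared with $a^{p^m}$ by a binomial expansion.
\end{itemize}

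Your reading of $a$ as a label for an element of $k^\times$ is necessary, not cosmetic. For $f>1$ the integers $1,\dots,p^f-1$ reduce into the prime field $\mathbb F_p\subset k$, and those divisible by $p$ even lie in $\mathfrak p$.
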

\begin{proof}
This is a direct consequence of the generalized Euler's theorem, i.e., $x^{N(\mathfrak p)^{m}}\equiv x^{N(\mathfrak p)^{m-1}}(\operatorname{mod} \mathfrak p)$. The remaining part of proof is similar to that of Proposition~\ref{teichmullerchar}.    
\end{proof}
Then the generalized logarithmic function in $D_a:=\{a+z: |z|_p<1\}$
is given by 
\begin{equation}
\widetilde{\operatorname{log}}(x):= \operatorname{lim}_{m\to\infty}\frac{1}{p^{fm}}\left(x^{p^{fm}} -a^{p^{fm}}\right)=\omega(\tilde a)\operatorname{log}(x/a),    
\end{equation}  
where $\omega$ is the Teichm\"{u}ller character with respect to  $R_\mathfrak p$.
Following the same steps, we have a family of generalized logarithmic functions as shown in~\ref{genlogqpcharacter}\begin{equation}
\widetilde{\operatorname{log}}_l(x):=s\operatorname{log}(p)+\omega^l(\tilde a)\operatorname{log}(g),   
\end{equation}
where  $x\in  R_\mathfrak p^\times$ is expressed uniquely as $x=p^sag$  for $s\in\ZZ$ and $g\in 1+p\ZZ_p$,  $a,l\in\{1,2,\cdots,p^f-1\}$, and also note that  $s\operatorname{log}(p)$ is a term of formal nature.

\subsection{Moser's trick and Galois cohomology}
In differential geometry, Moser's trick~\cite{moser1965} is a method to link two differential forms $\alpha$ and $\beta$ on a compact smooth manifold by a diffeomorphism $\varphi\in\operatorname{Diff}(M)$ such that $\varphi^*\alpha=\beta$.
By Moser's trick, the action of $\operatorname{Diff}(M)$ on the space of probability densities $\operatorname{Prob}(M)$ is transitive, and the kernel is exactly the infinite dimensional group $\operatorname{Diff}_\mu(M)$ of volume-preserving diffeomorphisms (volumorphisms) of $M$, thus we have
\begin{equation}\label{probequivdiff}
\operatorname{Prob}(M)\cong\operatorname{Diff}(M)/\operatorname{Diff}_\mu(M),
\end{equation}
where $\operatorname{Diff}_\mu(M)$ acts by left cosets. The quotient relation~\ref{probequivdiff} bridges two totally different fields: the space of probability densities $\operatorname{Prob}(M)$ that consists of positive volume forms of total volume one that reside in the top dimensional De Rham cohomology $H_{\operatorname{dR}}^n(M)$, and on the other side a quotient of two  Fr\'echet Lie groups.

This enlightens us of a strict relationship between cohomology groups and infinite dimensional diffeomorphism groups. Indeed, this is the case for certain $p$-adic group cohomology and  $p$-adic
diffeomorphism groups. First recall the notion of a group cohomology.
\begin{defi}
Let $G$ be a group and let $N$ be a $G$-module. The group of $1$-cochains $C^1(G,N)$ is denoted by the set of all mappings $\xi: G\to N$. The group of $1$-cocycles $Z^1(G,N)$ is denoted by the set of all cochains $\xi$ such that 
\begin{equation*}
\xi_{\sigma\tau}=\xi_{\sigma}^\tau+\xi_{\tau} \; \; \text{for all} 
\;\sigma,\tau\in G.   
\end{equation*}
The group of $1$-coboundaries $B^1(G,N)$ is denoted by the set of all cochains $\xi$ with an element $n\in N$ such that 
\begin{equation*}
\xi_{\sigma}=n^\sigma-n \; \; \text{for all} 
\;\sigma\in G.   
\end{equation*}
The first cohomology group is then the quotient group 
\begin{equation*}
H^1(G,N):=Z^1(G,N)/B^1(G,N).    
\end{equation*}
\end{defi}
Now we consider a $p$-adic completion $K$ of an arbitrary number field and a $p$-adic ball $B_\epsilon$ in this field. Then the space of locally analytic functions $C^{\operatorname{an}}(B_\epsilon)$ is a  $\operatorname{Diff}^{\operatorname{an}}(B_\epsilon)$-module, since the group action can operate from the right side of an element in $C^{\operatorname{an}}(B_\epsilon)$.  It is obvious that the $0$-th cohomology group of $C^{\operatorname{an}}(B_\epsilon)$ is the set of all $\operatorname{Diff}^{\operatorname{an}}(B_\epsilon)$-invariant functions in $C^{\operatorname{an}}(B_\epsilon)$, and this set is non-zero since it includes all the constants.
\begin{prop}
The mappings $\xi:\operatorname{Diff}^{\operatorname{an}}(B_\epsilon)\to C^{\operatorname{an}}(B_\epsilon)$  via: $\varphi\to\varphi-\operatorname{id}$ define a subgroup of $1$-cocycles. Moreover, if $\varphi_i=\operatorname{id}+f_i$ are diffeomorphisms in $\operatorname{Diff}^{\operatorname{an}}(B_\epsilon)$, then the corresponding cocycle $\xi_{\widetilde\varphi}$ of their composition $\widetilde\varphi:=\prod\varphi_i$ is the sum of $f_i$ in the first cohomology group $H^1(\operatorname{Diff}^{\operatorname{an}}(B_\epsilon),C^{\operatorname{an}}(B_\epsilon))$.
\end{prop}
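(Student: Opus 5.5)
The plan is to first pin down the module structure, then verify the cocycle identity directly, and finally pass to cohomology. I take the right action of $G:=\operatorname{Diff}^{\operatorname{an}}(B_\epsilon)$ on $N:=C^{\operatorname{an}}(B_\epsilon)$ to be precomposition, $u^\tau:=u\circ\tau$. This is a right action because $u^{\sigma\tau}$ should equal $(u^\sigma)^\tau$; with the group law written as $\sigma\tau:=\sigma\circ\tau$ we have $u\circ(\sigma\circ\tau)=(u\circ\sigma)\circ\tau$. It is well defined on $N$, since the composition of locally analytic maps is locally analytic (Theorem~\ref{thmdifflie} together with Proposition~\ref{strictdiff}).

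For $\xi_\varphi:=\varphi-\operatorname{id}=f$ with $\varphi=\operatorname{id}+f$, the cocycle identity is a one-line computation. With $\sigma=\operatorname{id}+f$ and $\tau=\operatorname{id}+g$ we get
\begin{align*}
\xi_{\sigma\tau}=\sigma\circ\tau-\operatorname{id}=\tau+f\circ\tau-\operatorname{id}=g+f\circ\tau=\xi_\tau+\xi_\sigma^\tau ,
\end{align*}
which is exactly the defining relation $\xi_{\sigma\tau}=\xi_\sigma^\tau+\xi_\tau$. This is the same formula already used in the composition step of the proof of Theorem~\ref{thmdifflie}. It also settles the sense of ``subgroup of $1$-cocycles'': $Z^1(G,N)$ is an abelian group under pointwise addition, and I interpret the claim as saying that $\xi$ lies in $Z^1$ and generates the subgroup $\ZZ\xi$ (more generally $K\xi$) of it.

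For the second assertion I iterate the identity by induction on the number of factors. Writing $\widetilde\varphi=\varphi_1\circ\cdots\circ\varphi_k$ gives
\begin{align*}
\xi_{\widetilde\varphi}=\sum_{i=1}^k f_i\circ(\varphi_{i+1}\circ\cdots\circ\varphi_k).
\end{align*}
It then remains to show that each twisted term $f_i\circ\psi_i$, where $\psi_i:=\varphi_{i+1}\circ\cdots\circ\varphi_k$, agrees with $f_i$ modulo the appropriate coboundary relation. Here I would use the cohomology class: $f_i\circ\psi_i-f_i=\xi_{\varphi_i}^{\psi_i}-\xi_{\varphi_i}$, and this difference has the shape $n^\sigma-n$ with $n=f_i$ and $\sigma=\psi_i$.

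That last step is the main obstacle. A coboundary must be of the form $\sigma\mapsto n^\sigma-n$ for a \emph{single} $n$ and all $\sigma$, whereas here each term comes with its own $n=f_i$. My plan is therefore to read the statement as an identity of cocycle values in which each correction $f_i\circ\psi_i-f_i$ is a coboundary-type term $n^\sigma-n$, and to conclude that modulo such terms $\xi_{\widetilde\varphi}\equiv\sum_i f_i$. If a sharper statement is wanted, the fallback is to work to first order near the identity: $f_i\circ\psi_i-f_i=O(\|f_i\|_{\operatorname{lip}}\cdot\sup|\psi_i-\operatorname{id}|)$ by the Lipschitz bound $L<1$, so the sum formula holds up to higher-order terms. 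I expect this interpretive point, rather than any computation, to be where care is needed.
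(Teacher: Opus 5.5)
Your argument is the paper's own: the same identity $\xi_{\varphi\circ\phi}=f\circ\phi+g=\xi_\varphi^\phi+\xi_\phi$, followed by the same move of declaring $f\circ\phi\equiv f$ ``within the cohomology group'' and inducting, so the interpretive difficulty you flag is present in the paper's proof as well. Your suspicion is justified, and the issue dissolves without the first-order fallback: since $\operatorname{id}\in C^{\operatorname{an}}(B_\epsilon)$ one has $\xi_\sigma=\operatorname{id}\circ\sigma-\operatorname{id}$, i.e.\ $\xi=\delta(\operatorname{id})$ is a coboundary, so $[\xi]=0$ in $H^1(G,N)$ and each $f_i=\xi_{\varphi_i}$ is itself of the form $n^\sigma-n$; hence the only rigorous reading of the sum formula (a congruence in the coinvariants $N/\langle n^\sigma-n\rangle$, because $\xi_{\widetilde\varphi}\in N$ is not a class in $H^1$) is the trivially true $0=0$.
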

\begin{proof}
Let $\varphi=\operatorname{id}+f$ and $\phi=\operatorname{id}+g$ denote two $p$-adic diffeomorphisms in the  $p$-adic ball $B_\epsilon$. The claim on $\xi$ is immediate from
the calculation
\begin{equation}
\xi_{\varphi\circ\phi}=   \varphi\circ\phi-\operatorname{id}=f\circ\phi+g=\xi_{\varphi}^\phi+\xi_{\phi}. 
\end{equation}
Therefore, within the cohomology group, we have 
$\xi_{\varphi\circ\phi}\equiv f+g$ and the second claim follows by induction.
\end{proof}
This proposition reveals a logarithmic structure in the 
cohomology group, in other words, the cocycle $\xi$ maps a multiplication of the diffeomorphism group into 
an addition. Furthermore, this proposition has a finite dimensional version that is closely associated to the Galois cohomology. By proposition~\ref{prop: rootdiff}, if the radius $\epsilon<1$, any monic minimal polynomial $g$ corresponds to a diffeomorphism $\varphi=\operatorname{id}+g$ in $B_\epsilon$, which has fixed points exactly at the roots of $g$. But we know that the Galois group $G_{L/K}$ is 
an automorphism group that fixes all elements of $K$ for the extension $L=K[x]/(g(x))$.
Henceforth, we have a larger automorphism group $\mathcal A_{L/K}$ by an enhancement of Galois group $G_{L/K}$.
\begin{equation}
 1\to G_{L/K}\to \mathcal A_{L/K}\to \langle\varphi\rangle\to1,   
\end{equation}
where $\langle\varphi\rangle$
is a subgroup of $\operatorname{Diff}^{\operatorname{an}}(B_\epsilon)$ generated by $\varphi$. Denote by $\mathcal A$ the group generated by all such 
groups $\mathcal A_{L/K}$
for finite field extensions $L/K$. Then we
obtain the following sequence of mappings, 
\begin{equation}\label{galoiscohom}
 G_{\overline K/K}\stackrel{i}\hookrightarrow \mathcal A\stackrel{\pi}\rightarrow \operatorname{Diff}^{\operatorname{an}}(B_\epsilon),
\end{equation}
where $i$ is the injective from the absolute Galois group $G_{\overline K/K}$ and 
$\pi$ is the projection to a subset of $\operatorname{Diff}^{\operatorname{an}}(B_\epsilon)$ containing all monic irreducible polynomials. The sequence~\ref{galoiscohom} implies that the first cohomology group (from  $\mathcal A$ to $C^{\operatorname{an}}(B_\epsilon)$)  can be considered as the combination of Galois cohomology $H^1(G_{\overline K/K}, C^{\operatorname{an}}(B_\epsilon))$  and group cohomology $H^1(\operatorname{Diff}^{\operatorname{an}}(B_\epsilon),C^{\operatorname{an}}(B_\epsilon))$.

\subsection{Tate curves and $p$-adic theta functions}
In the previous subsection, we apply some of the techniques developed in Section~\ref{sec:padicdiffgroup} to diffeomorphism group $\operatorname{Diff}^{\operatorname{an}}(R_\mathfrak p)$ over an arbitrary number field, which corresponds to the right hand side of the sequence~\ref{schemeexactseq5}. Now we turn to the left hand side, and consider the torsion points $E[l]$ of a given elliptic curve $E$, which will give the \'etale picture in IUT. First we recall the notion of a Tate curve and Tate's $p$-adic uniformization theorem. 

Let $E$ be an elliptic curve over $p$-adic fields $K/\mathbb Q_p$. Tate showed that for every $q\in K^\times$ with $|q|<1$, there is an elliptic curve $E_q/K$ that is $p$-adic analytically isomorphic to $K^\times/q^{\ZZ}$. More explicitly, there exists for every $q,u$ in $K^\times$ with $|q|<1$, an injective homomorphism of groups with image $E_q\subset \mathbb P_2(K)$
\begin{equation}
\phi_q: \begin{cases}
K^\times/q^\ZZ  	&\to  \mathbb P_2(K)\\ 
u 						&\mapsto 
(1:X(u,q):Y(u,q)),
\end{cases}
\end{equation}
where $X$ and $Y$ are coordinates for Weierstrass equation; see e.g.,~\cite{brown2003, silverman1994}. One way to prove such uniformization is to use the theory of $p$-adic theta functions, as is shown by Brown's nice exposition~\cite{brown2003}. Since $p$-adic theta functions play a central role in IUT, we carefully introduce this notion following the exposition. 
\begin{defi}
A function $f:K^\times\to K$ is said to be holomorphic if it is defined by a series
$f(u)=\sum_{n=-\infty}^{\infty}a_n u^n$, with coefficients $a_i\in K$ such that $f$ is convergent for any $u\in K^\times$.
Let $H_K$ denote the field of holomorphic functions in $K$.  Let $M_K$ denote the field of meromorphic functions in $K$, i.e., the quotient field of $H_K$. Let $M_{K,q}$ denote the subfield of $M_K$ consisting of all meromorphic functions which are $q$-periodic for a fixed $q\in K^\times$ with $|q|<1$.
\end{defi}
It is not hard to see that the intersection $H_K\cap M_{K,q}$ is exactly the set of  constants. This is coincident with the complex case for which every holomorphic elliptic function is constant. 
\begin{defi}
Let $c\in K^\times, r\in \ZZ$. A theta function of type $cu^r$ is a holomorphic function $f$  satisfying 
\begin{equation}
f(u)=cu^rf(qu)    
\end{equation}
for all $u\in K^\times$. We denote by $H_{K,q}(cu^r)$ the space of all such theta functions of type $cu^r$.
The fundamental theta function is the theta function of type $-u$ defined by 
\begin{equation}
\theta(u)=(1-u)\prod_{n=1}^{\infty}\left(1-q^nu\right)\left(1-q^nu^{-1}\right),   
\end{equation}
with zeros exactly on $q^\ZZ$. Let $c\in K^\times$,  $\theta_c$ is defined by $\theta_c(u):=\theta(u/c)$. Then $\theta_c$ is a theta function of type $-c^{-1}z$ with zeros exactly on $cq^\ZZ$.
\end{defi}
\begin{rem}
The theta function frequently used in IUT is of the form 
\begin{equation}
\tilde\theta(\tilde u):=q^{-\frac{1}{8}}\sum_{n=-\infty}^{\infty} (-1)^n  q^{\frac{1}{2}(n+\frac{1}{2})^2}\tilde u^{2n+1},  
\end{equation}
where $\tilde u:=\sqrt{u}$ is a square root element. It is "$q$-periodic" in the sense that 
\begin{equation}\label{qperiodic}
\tilde\theta(\tilde u)=(-1)^a q^{\frac{a^2}{2}} \tilde u^{2a} \tilde\theta(q^{\frac{a}{2}}\tilde u),  
\end{equation}
for $a\in\ZZ$. Then
$\tilde\theta$ is associated with the fundamental theta function $\theta$ by the following equality
\begin{equation}\label{thetarelation}
\tilde u \tilde\theta(\tilde u)=-\eta(q)\theta(u),    
\end{equation}
where $\eta(q):=\prod_{n=1}^{\infty}(1-q^n)$ is the $p$-adic eta function. This is why in IUT the $2l$-th torsion point of the theta function $\tilde\theta$ can be seen as the $l$-th torsion point of $\theta$.
\end{rem} 

There is a $p$-adic version of the Abel-Jacobi theorem, which is useful in many aspects.
\begin{thm}[Abel-Jacobi]\label{abeljacobithm}
For any two sets $\{a_1,\cdots,a_k\}$, $\{b_1,\cdots, b_k\}$ of $k$ integers in $K^\times$ such that $\prod a_i=\prod b_i$, there exists a $q$-periodic function $f\in M_{K,q}$, unique up to multiplication by a constant, with zeros exactly $\cup a_i q^\ZZ$ and poles exactly $\cup b_i q^\ZZ$. Conversely, for any $f\in M_{K,q}$ with all of its zeros and poles in $K^\times$, we can find sets $\{a_1,\cdots,a_k\}$ and $\{b_1,\cdots, b_k\}$ in $K^\times$ such that the zeros are  $\cup a_i q^\ZZ$, the poles are $\cup b_i q^\ZZ$, and $\prod a_i=\prod b_i$.
\end{thm}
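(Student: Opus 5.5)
The plan is to build the desired functions out of the shifted theta functions $\theta_c$ and then use the fact, noted after the definition of $M_{K,q}$, that $H_K\cap M_{K,q}$ consists only of constants.

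For the direct statement, given $\{a_i\}$ and $\{b_i\}$ with $\prod a_i=\prod b_i$, I will set
\begin{equation*}
f(u):=\prod_{i=1}^k\frac{\theta_{a_i}(u)}{\theta_{b_i}(u)}.
\end{equation*}
Each $\theta_c$ is a theta function of type $-c^{-1}u$ with zeros exactly on $cq^\ZZ$. Hence $f(u)=\prod_i (a_i^{-1}b_i)\,f(qu)$, and the hypothesis $\prod a_i=\prod b_i$ makes this factor equal to $1$, so $f\in M_{K,q}$. The zeros of $f$ lie on $\cup a_iq^\ZZ$ and the poles on $\cup b_iq^\ZZ$, counted with multiplicity. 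Some zeros and poles may cancel, exactly as in the classical statement, so I read "zeros exactly" with multiplicities, that is, as an equality of divisors modulo $q^\ZZ$.

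For uniqueness, suppose $g\in M_{K,q}$ has the same divisor. Then $g/f$ is $q$-periodic and has neither zeros nor poles in $K^\times$. I will then need the fact that a meromorphic function without poles on $K^\times$ is holomorphic, i.e.\ lies in $H_K$. Granting this, $g/f\in H_K\cap M_{K,q}$, so $g/f$ is constant.

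For the converse, take $f\in M_{K,q}$. By $q$-periodicity its zeros and poles are unions of $q^\ZZ$-orbits. The annulus $|q|<|u|\le 1$ is a fundamental domain, and there $f=h_1/h_2$ with $h_1,h_2\in H_K$. Each $h_j$ has only finitely many zeros in such an annulus: this follows from the Weierstrass preparation / Newton polygon theory for Laurent series, since the number of zeros on $|u|=\rho$ is read off from the breaks of the valuation polygon. Choose orbit representatives $a_1,\dots,a_k$ of the zeros and $b_1,\dots,b_{k'}$ of the poles.

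Next set $F:=\prod\theta_{a_i}/\prod\theta_{b_j}$. Then $f/F$ has no zeros or poles, and $f(u)=f(qu)$ gives
\begin{equation*}
(f/F)(u)=c\,u^{k'-k}(f/F)(qu),\qquad c=\pm\textstyle\prod b_j/\prod a_i .
\end{equation*}
So $G:=f/F$ is a nowhere-vanishing holomorphic function. The plan is to show that a unit of $H_K$ is a monomial $\lambda u^N$, again by the Newton polygon argument: a Laurent series with no zeros on $K^\times$ has a single dominant term on every circle. Substituting $G=\lambda u^N$ into the functional equation gives $\lambda u^N=c\,u^{k'-k}\lambda q^Nu^N$. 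Comparing powers forces $k=k'$, and then $c\,q^N=1$, so $\prod a_i=\pm q^N\prod b_j$. The sign is fixed by $k=k'$ together with the type $-c^{-1}u$. Finally, replacing $a_1$ by $a_1q^{N}$ (or its inverse, as needed) gives another representative of the same orbit and achieves $\prod a_i=\prod b_i$.

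The main obstacle is the $p$-adic function-theoretic input: that holomorphic functions have finitely many zeros on each closed annulus, and that units of $H_K$ are monomials. I would prove both directly from the valuation polygon of $\sum a_nu^n$, following the treatment in~\cite{brown2003}. Everything else is bookkeeping with the transformation law of $\theta_c$.
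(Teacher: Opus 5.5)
Your overall plan (a theta quotient for existence, and ``no zeros or poles $\Rightarrow$ unit of $H_K$ $\Rightarrow$ monomial'' for uniqueness and the converse) is the natural one. The paper gives no argument of its own; it only cites \cite[Theorem~4.6]{brown2003}.

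The gap is in where you count zeros and poles. You state your function-theoretic facts for zeros and poles ``on $K^\times$'', and in that form they are false. Over $K=\mathbb Q_p$, the series $u^2-p\in H_K$ has no zeros in $K^\times$ but is not a monomial: its two terms tie on $|u|=p^{-1/2}$, where its zeros $\pm\sqrt p$ lie. Likewise $1/(u^2-p)$ has no poles in $K^\times$ but is not in $H_K$.

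Correspondingly, your converse never uses its hypothesis that all zeros and poles of $f$ lie in $K^\times$, and that hypothesis cannot be dropped. Take $L/K$ quadratic, $a\in L\setminus K$ with conjugate $a'$, and $f=\theta_a\theta_{a'}/(\theta_1\theta_{aa'})$. This $f$ lies in $M_{K,q}$: it is $q$-periodic because $aa'=1\cdot aa'$, and $\theta_a\theta_{a'}$ has Laurent coefficients symmetric in $a,a'$, hence in $K$. Your procedure finds $k=0$ zero orbits and $k'=2$ pole orbits in $K^\times$ and produces $G=\theta_a\theta_{a'}$. This $G$ has no zeros in $K^\times$ but is not a monomial. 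So the step ``comparing powers forces $k=k'$'' fails, as it must, since the conclusion is false for this $f$.

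To repair this, take divisors over $\CC_p^\times$ (or $\overline K^\times$) throughout. This is also the only reading under which the uniqueness claim holds; otherwise you could multiply $f$ by a nonconstant $q$-periodic quotient of theta functions whose zeros and poles all lie outside $K^\times$. The hypothesis of the converse is then exactly what guarantees that $G=f/F$ has no zeros or poles in $\CC_p^\times$.

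Next, run the valuation polygon over all radii $\rho\in|\CC_p^\times|=p^{\mathbb Q}$, not just $\rho\in|K^\times|$. Its breaks occur only at such radii, because $|a_n|\in|K^\times|$. A tie among dominant terms at $\rho$ yields a zero of $G$ in $\CC_p$ on $|u|=\rho$. So with no zeros the dominant index $N$ is independent of $\rho$, and letting $\rho\to0$ and $\rho\to\infty$ forces $a_n=0$ for $n\neq N$.

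There is also a third input of the same kind: a quotient with no poles in $\CC_p^\times$ lies in $H_K$ (Weierstrass division on closed annuli). You use it twice, for $g/f$ and for $f/F$, but did not list it among the obstacles. With these changes the rest of your argument goes through.

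One small slip: from $F(u)=(-1)^{k-k'}\bigl(\prod b_j/\prod a_i\bigr)u^{k-k'}F(qu)$ one gets $c=(-1)^{k-k'}\prod a_i/\prod b_j$, the inverse of what you wrote. This only changes the sign of the exponent in the final adjustment of $a_1$.
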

The proof can be found in Theorem~4.6 of~\cite{brown2003}. This theorem shows that any $q$-periodic meromorphic function $f$ can be expressed in terms of theta functions 
\begin{equation}\label{thetaexpformula}
f=c\frac{\prod_{i=1}^k\theta_{a_i}}{\prod_{i=1}^k\theta_{b_i}},    
\end{equation}
where all the coefficients $c, a_i$
and $b_i$ are in $K^\times$. All of these discussions upon Theorem~\ref{abeljacobithm} can be summarized in terms of divisors on $K^\times$.

\begin{prop}\label{divexactseq}
Fix $q$ with $|q|<1$. Denote by $\operatorname{Div}(E_q)$ the set of divisors on $E_q$,  by $\operatorname{\Omega}_q$ the set of nonzero theta functions on $E_q$, and by $\operatorname{\Omega}^0_q$ the set of theta functions in $E_q$ with no zeros or poles. Then the canonical map $f\to\operatorname{div}(f)$ induces an exact sequence of groups
\begin{equation}
1\longrightarrow \operatorname{\Omega}^0_q  \longrightarrow \operatorname{\Omega}_q
 \longrightarrow\operatorname{Div}(E_q)\longrightarrow 1.
\end{equation}
\end{prop}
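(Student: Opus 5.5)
We will prove the statement in three steps: check that $\operatorname{div}$ is a homomorphism, identify its kernel with $\Omega^0_q$, and show it is surjective onto $\operatorname{Div}(E_q)$. Throughout we identify $E_q$ with $K^\times/q^{\ZZ}$ through Tate's map $\phi_q$. For a nonzero theta function $f$ of type $cu^r$, we first check that $\operatorname{div}(f)$ is well defined on $E_q$. The functional equation $f(u)=cu^rf(qu)$ shows that the order of vanishing of $f$ at $u$ equals its order at $qu$, since $cu^r$ is a unit on $K^\times$. Hence the zeros of $f$ are unions of cosets $aq^{\ZZ}$, and $\operatorname{div}(f):=\sum_{\bar a\in E_q}\operatorname{ord}_a(f)\,[\bar a]$ is a well-defined divisor on $E_q$. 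It is finite because, on the closed annulus $|q|\le|u|\le1$, a convergent Laurent series has only finitely many zeros (Newton polygon / Weierstrass preparation), and this annulus is a fundamental domain for $q^{\ZZ}$. The set $\Omega_q$ is a group under multiplication, since types multiply: if $f$ has type $cu^r$ and $g$ has type $c'u^{s}$, then $fg$ has type $cc'u^{r+s}$. Multiplicativity of orders of vanishing then makes $\operatorname{div}$ a homomorphism. This gives the map $\Omega_q\to\operatorname{Div}(E_q)$.

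Exactness at $\Omega_q$ is a tautology: the kernel of $\operatorname{div}$ consists exactly of the theta functions with no zeros, which is $\Omega^0_q$ by definition. Injectivity of $\Omega^0_q\to\Omega_q$ is clear. For a concrete description, one can also show that a zero-free holomorphic function on $K^\times$ is of the form $cu^n$: a Laurent series with no zeros must have a single dominant monomial on every annulus, so it is a monomial times a unit. Hence $\Omega^0_q=\{cu^n\}$, but the exactness itself does not need this.

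The real content, and the main obstacle, is surjectivity onto \emph{all} divisors. Abel--Jacobi (Theorem~\ref{abeljacobithm}) only yields degree-zero divisors with $\prod a_i=\prod b_i$, so it alone is not enough. Instead we use the translated theta functions directly. Since $\theta_c$ has simple zeros exactly on $cq^{\ZZ}$ and is a theta function of type $-c^{-1}u$, it lies in $\Omega_q$ and satisfies $\operatorname{div}(\theta_c)=[\bar c]$. The divisors $[\bar c]$ generate $\operatorname{Div}(E_q)$ as a free abelian group. So for $D=\sum n_i[\bar c_i]$ we take $f=\prod\theta_{c_i}^{n_i}$, which is the same form as formula~\ref{thetaexpformula} but with no balancing constraint. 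Here we must interpret $\Omega_q$ as the multiplicative group generated by theta functions, allowing quotients, i.e.\ meromorphic theta functions of some type $cu^r$; otherwise negative divisors could not be reached. The key points to verify are that $\theta_c$ indeed has \emph{simple} zeros at $cq^{\ZZ}$ and no others. This is immediate from the product formula: each factor $1-q^nu/c$ or $1-q^nu^{-1}c$ vanishes simply at exactly one point, and the factors' zero sets are disjoint. One must also check that the product converges uniformly on annuli, which holds because $|q|<1$.
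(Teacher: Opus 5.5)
Your proof is correct and follows the same route as the paper, which simply defers to the classical complex-analytic argument of Hirzebruch--Berger--Jung: the kernel of $\operatorname{div}$ is $\Omega^0_q$ by definition, and surjectivity comes from products of the translates $\theta_c$ with $\operatorname{div}(\theta_c)=[\bar c]$, once $\Omega_q$ is read, as you rightly insist, as the group of meromorphic theta functions. The only slip is in your optional aside: $\Omega^0_q=\{cu^n\}$ requires zeros to be counted in $\mathbb C_p^\times$, whereas your $\operatorname{div}$ only sees zeros in $K^\times$. For example, if $K$ is not algebraically closed, then $\theta_c\theta_{c'}$, for $c,c'$ the two conjugates of an element quadratic over $K$, has coefficients in $K$ but no zeros in $K^\times$; exactness never uses the aside, though, so nothing breaks.
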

\begin{proof}
The proof is similar to that of~\cite[Appendix I, Theorem~5.1]{hirzebruch1992}.    
\end{proof}
Furthermore, over the multiplicative group $M_{K,q}^\times$, we have the following exact sequence.
\begin{prop}\label{periodexactseq}
Let $\operatorname{Div}_0(E_q)$ denote the group of divisors of degree $0$ on $E_q$, and let $\operatorname{Div}_{\operatorname{p}}(E_q)$ denote its subgroup of principal divisors on $E_q$. Then  
$f\to\operatorname{div}(f)$ induces an exact sequence of groups
\begin{equation}
1\longrightarrow K^\times  \longrightarrow M_{K,q}^\times\longrightarrow\operatorname{Div}_{\operatorname{p}}(E_q)\longrightarrow 1,
\end{equation}
and we have 
\begin{equation}\label{chardivp}
\operatorname{Div}_{\operatorname{p}}(E_q)=\left\{   D\in \operatorname{Div}_0(E_q):D=\prod (Q)^{n_Q},\; \;
\prod  Q^{n_Q}\equiv1
\right\}.
\end{equation}
\end{prop}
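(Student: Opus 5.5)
The plan is to prove Proposition~\ref{periodexactseq} in three steps, leaning on Theorem~\ref{abeljacobithm} and the theta-quotient representation~\eqref{thetaexpformula}. First I show that $\operatorname{div}$ is a homomorphism from $M_{K,q}^\times$ into degree-zero divisors. Then I identify its kernel with the constants $K^\times$. Finally I show its image is exactly the set on the right-hand side of~\eqref{chardivp}. Throughout, a point $Q\in E_q$ is identified with its class in $K^\times/q^{\ZZ}$, so the product $\prod Q^{n_Q}$ is well defined in $K^\times/q^\ZZ$. The condition ``$\equiv1$'' is read as equality modulo $q^{\ZZ}$.

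\emph{Step 1: $\operatorname{div}$ is a homomorphism into degree-zero divisors.} Let $f\in M_{K,q}^\times$ be nonzero.
\begin{itemize}
\item Since $f$ is $q$-periodic, its zeros and poles form $q^\ZZ$-orbits, so $\operatorname{div}(f)$ is a well-defined divisor on $E_q=K^\times/q^\ZZ$.
\item Additivity $\operatorname{div}(fg)=\operatorname{div}(f)+\operatorname{div}(g)$ holds pointwise from orders of vanishing of Laurent series.
\item By the converse half of Theorem~\ref{abeljacobithm}, $f=c\prod\theta_{a_i}/\prod\theta_{b_i}$ with the same number $k$ of $a_i$ and $b_i$, and with $\prod a_i=\prod b_i$.
\item Each $\theta_{c}$ has simple zeros exactly on $cq^\ZZ$, from the product formula for $\theta$. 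Hence $\operatorname{div}(f)=\sum (a_i)-\sum (b_i)$, which has degree $0$ and satisfies $\prod a_i\prod b_i^{-1}=1$.
\end{itemize}
This gives the inclusion ``$\subseteq$'' in~\eqref{chardivp}, with the product condition holding even as an equality in $K^\times$ for this choice of representatives.

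\emph{Step 2: the kernel is $K^\times$.} Suppose $\operatorname{div}(f)=0$. Then $f$ has no zeros or poles in $K^\times$, so $f$ is holomorphic, i.e. $f\in H_K\cap M_{K,q}$. The remark after the definition of $M_{K,q}$ says this intersection consists of the constants, so $f\in K^\times$. Conversely, nonzero constants clearly have trivial divisor. This gives exactness at $K^\times$ and at $M_{K,q}^\times$; injectivity of $K^\times\to M_{K,q}^\times$ is trivial.

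\emph{Step 3: every divisor in the set~\eqref{chardivp} is principal.} Let $D=\sum n_Q(Q)$ have degree $0$ with $\prod Q^{n_Q}\in q^\ZZ$. I proceed as follows.
\begin{itemize}
\item Write $D$ as $\sum_{i=1}^k (a_i)-\sum_{i=1}^k(b_i)$ with representatives $a_i,b_i\in K^\times$, listing points with multiplicity. This uses degree zero.
\item The hypothesis then gives $\prod a_i=q^{N}\prod b_i$ for some $N\in\ZZ$.
\item Replace $a_1$ by $a_1q^{-N}$. This representative of the same point of $E_q$ does not change $D$, and now $\prod a_i=\prod b_i$ exactly.
\item The direct half of Theorem~\ref{abeljacobithm} produces $f\in M_{K,q}$ with zeros $\cup a_iq^\ZZ$ and poles $\cup b_iq^\ZZ$, namely $f=\prod\theta_{a_i}/\prod\theta_{b_i}$.
\end{itemize}

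The main obstacle is in this last step: Theorem~\ref{abeljacobithm} is phrased in terms of zero and pole \emph{sets}, whereas divisors carry multiplicities. To handle this I would bypass the set-theoretic phrasing and work directly with the explicit quotient $\prod\theta_{a_i}/\prod\theta_{b_i}$. Its $q$-periodicity follows from the functional equation $\theta_c(qu)=-(c/u)\,\theta_c(u)$, i.e. the type of $\theta_c$. Applied to the quotient, the factors $-c/u$ combine to $\prod b_i/\prod a_i=1$. The multiplicities are then read off from the simplicity of the zeros of each $\theta_c$, so $\operatorname{div}(f)=D$.
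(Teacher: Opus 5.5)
Your three steps are the paper's proof written out in detail. The paper likewise gets the image and formula~\ref{chardivp} from Theorem~\ref{abeljacobithm}, and the kernel from $H_K\cap M_{K,q}=\{\text{constants}\}$. Your Step~3 is more careful than the paper: you handle multiplicities through the explicit quotient $\prod\theta_{a_i}/\prod\theta_{b_i}$ and the functional equation $\theta_c(qu)=-(c/u)\theta_c(u)$, and you remove the $q^{\ZZ}$-ambiguity in $\prod Q^{n_Q}\equiv 1$ by shifting one representative.

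\textbf{The gap.} One inference fails as written, and the paper's short proof rests on the same unstated assumption. In Step~2 you pass from ``$f$ has no zeros or poles in $K^\times$'' to ``$f$ is holomorphic''. For a $p$-adic field $K$, which is not algebraically closed, this is false: an element of $M_{K,q}$ can have all of its zeros and poles in $\overline{K}^\times\setminus K^\times$.

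For example, let $a,a'$ and $b,b'$ be the roots of two distinct irreducible quadratics $u^2-su+n$ and $u^2-tu+n$ over $K$.
\begin{itemize}
\item $\theta_a\theta_{a'}$ and $\theta_b\theta_{b'}$ have Laurent coefficients in $K$, since the coefficients are symmetric in the conjugates.
\item The quotient $\theta_a\theta_{a'}/(\theta_b\theta_{b'})$ is $q$-periodic by the functional equation you use in Step~3, because $aa'=bb'=n$.
\item It is nonconstant, because its zeros $\{a,a'\}q^{\ZZ}$ and poles $\{b,b'\}q^{\ZZ}$ do not cancel.
\end{itemize}
Yet it has no zeros or poles in $K^\times$. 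So, with your identification of the points of $E_q$ with $K^\times/q^{\ZZ}$, the kernel of $\operatorname{div}$ is strictly larger than $K^\times$.

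The same issue sits in Step~1. There you apply the converse half of Theorem~\ref{abeljacobithm} without checking its hypothesis that all zeros and poles of $f$ lie in $K^\times$.

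\textbf{The repair.} Build this hypothesis into the setting, in one of two ways.
\begin{itemize}
\item Work over an algebraically closed complete field such as $\CC_p$. Then ``no zeros or poles'' does force $f\in H_K$, once you check that $h\mid g$ in $H_K$ for $f=g/h$; this follows from Weierstrass preparation on closed annuli.
\item Replace $M_{K,q}^\times$ by its subgroup of functions whose zeros and poles all lie in $K^\times$.
\end{itemize}
With either convention your three steps go through as you wrote them.
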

\begin{proof}
The surjectivity is the first part of Theorem~\ref{abeljacobithm}. The exactness at the middle is a direct consequence of the intersection $H_K\cap M_{K,q}=\{\text{constants}\}$. Finally, the formula~\ref{chardivp}, as a characterization of $\operatorname{Div}_{\operatorname{p}}(E_q)$, is in essence a restatement of Theorem~\ref{abeljacobithm}. 
\end{proof}

Next we state the Riemann-Roch theorem on a Tate curve, which is much simpler than the standard Riemann-Roch theorem on a Riemann surface.
\begin{thm}[Riemann-Roch, Theorem~4.10 in~\cite{brown2003}]\label{rrthm}
Let $\mathcal L(n)$ denote the $K$-vector space of functions $f\in M_{K,q}$ such that $f$ has poles of degree$\leq n$ at $q^\ZZ$ and no other poles.
Then we have 
\begin{equation}
\operatorname{dim}_K(\mathcal L(n))= \begin{cases}
n  	& n\geq1\\ 
1 						& 
n=0.
\end{cases}    
\end{equation}
\end{thm}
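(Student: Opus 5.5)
The plan is to follow the classical route via theta functions and the Abel–Jacobi theorem (Theorem~\ref{abeljacobithm}). Every $f\in\mathcal L(n)$ is then written as a quotient of products of the $\theta_c$ as in formula~\ref{thetaexpformula}. After that, the dimension of $\mathcal L(n)$ is identified with the dimension of a space of theta functions of a fixed type.

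First, the case $n=0$. An element $f\in\mathcal L(0)$ has no poles at all in $K^\times$. We first check that its Laurent expansion then converges on all of $K^\times$, so that $f\in H_K$. This holds because a meromorphic function without poles on the annuli $|q|\le|u|\le1$, extended by $q$-periodicity, is holomorphic. Then $f\in H_K\cap M_{K,q}$, which we noted is the set of constants. This gives $\dim\mathcal L(0)=1$.

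For $n\ge1$, take $f\in\mathcal L(n)$ with divisor $\operatorname{div}(f)\ge -n(q^\ZZ)$. Then the function $g:=f\,\theta^n$ is holomorphic on $K^\times$. Since $\theta$ is a theta function of type $-u$ and $f$ is $q$-periodic, $g$ is a theta function of type $(-u)^n=(-1)^nu^n$. Conversely, every $g\in H_{K,q}((-1)^nu^n)$ gives $g/\theta^n\in\mathcal L(n)$. So $\mathcal L(n)\cong H_{K,q}((-1)^nu^n)$, and it remains to compute the dimension of this theta space.

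For that we write $g=\sum a_mu^m$ and impose $g(u)=(-1)^nu^ng(qu)$. Comparing coefficients of $u^m$ gives the recursion
\[
a_m=(-1)^n q^{m-n}a_{m-n}.
\]
Every coefficient is therefore determined by $a_0,\dots,a_{n-1}$, so $\dim\le n$. For $\ge n$ we must show that each choice of $(a_0,\dots,a_{n-1})$ yields a series converging on all of $K^\times$. Iterating the recursion forward gives $|a_{r+kn}|=|q|^{k(k-1)n/2+kr}|a_r|$, and backward growth is likewise quadratic-exponentially small. This is the same Gaussian decay as in the series for $\tilde\theta$, so convergence for every $u\in K^\times$ follows.

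The main obstacle I expect is the justification, used in both cases, that $f\theta^n$ (respectively $f$ itself) is genuinely in $H_K$, that is, that removing the poles produces a global Laurent series on $K^\times$. I would handle this with Theorem~\ref{abeljacobithm}: write $f=c\prod\theta_{a_i}/\prod\theta_{b_i}$, where the $b_i$ all lie in $q^\ZZ$ with multiplicity at most $n$. After multiplying by $\theta^n$ and using $\theta_{q^k}=$ (unit monomial)$\cdot\theta$, the denominators cancel explicitly, leaving a product of $\theta_{a_i}$ times a monomial, which is visibly holomorphic.
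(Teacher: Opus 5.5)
Your proposal is correct and follows the paper's route. The paper deduces the statement directly from the dimension formula (\ref{dimoftheta}) for $H_{K,q}(cu^r)$, and your isomorphism $f\mapsto f\theta^n$ from $\mathcal L(n)$ onto $H_{K,q}((-1)^nu^n)$, together with the coefficient recursion $a_m=(-1)^nq^{m-n}a_{m-n}$ and its quadratic-exponential decay, is exactly that reduction written out. One small point to tidy: the converse half of Theorem~\ref{abeljacobithm} as stated requires the zeros of $f$ to lie in $K^\times$, so in your last step you should either pass to a finite extension of $K$ containing them and then descend, or take $K$ algebraically closed.
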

\begin{proof}
This follows immediately from the following classification of dimensions
\begin{equation}\label{dimoftheta}
\operatorname{dim}_K(H_{K,q}(cu^r))= \begin{cases}
r  	& r>0\\ 
1 						& 
r=0, c\in q^\ZZ\\ 
0 & 
r=0, c\notin q^\ZZ; r<0. 
\end{cases}      
\end{equation}
\end{proof}

This theorem along with formula~\ref{thetaexpformula} provides a complete characterization of a $q$-periodic meromorphic function $f$ that only has poles at $q^\ZZ$.
\begin{rem}
The Riemann-Roch theorem on a Riemann surface can be reduced to $ \operatorname{dim}(\mathcal L(D))=\operatorname{deg}(D)-g+1$ for $\operatorname{deg}(D)>2g-2$. Here we work on an  $p$-adic version of elliptic curve, so $g=1$  yields $\operatorname{dim}(\mathcal L(D))=\operatorname{deg}(D)=n$, which is essentially what the above theorem says.   
\end{rem}

After all these preparations, we finally get to the point of discussing a general $l$-torsion point on $E_q/K$. 
In the following we work on a sufficiently large field $K$ that includes the primitive $l$-th root of unity and $\tilde q:=q^{\frac{1}{2l}}$. 
\begin{prop}\label{lthroottheta}
Let $l\geq3$ be a prime and $Q\in E_q[l]$ be any $l$-torsion point on $E_q$.
Then there exists a function $f_Q(u)$ with the following properties:
\begin{itemize}
\item[1.] The map $u\to f_Q(u)$ is a theta function with respect to $q^\ZZ$.
\item[2.] The divisor of  $f_Q(u)$ is $(1)/(Q)$.
\item[3.] $f_Q(u)\to(1-u)$ for $u\to1$, and for any $Q'=Qq^m, m\in\ZZ$, there exists an $l$-th root of unity $\mu$ such that $f_{Q'}(u)=\mu f_{Q}(u)$.
\end{itemize}
\end{prop}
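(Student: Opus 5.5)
The plan is to build $f_Q$ as an explicit quotient of shifted fundamental theta functions, in the spirit of formula~\ref{thetaexpformula}, but without forcing periodicity (by Theorem~\ref{abeljacobithm}, $(1)-(Q)$ is not principal unless $Q\in q^\ZZ$). We only need property~1 in the sense of a theta function of some type $cu^r$, possibly meromorphic. Identify $E_q\cong K^\times/q^\ZZ$ and lift the $l$-torsion point $Q$ to $u_Q\in K^\times$ with $u_Q^l\in q^\ZZ$. Because $K$ contains $\tilde q=q^{1/2l}$ and the $l$-th roots of unity, we may write $u_Q=\zeta q^{k/l}$ with $\zeta^l=1$ and $k\in\ZZ$.

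The first candidate is
\begin{equation*}
f_Q(u):=\frac{\theta(u)}{\theta_{u_Q}(u)}\cdot\frac{1}{(1-u)\,\theta(u)/\theta(u)}\Big|_{\text{normalized}},
\end{equation*}
or more simply $f_Q(u)=c_Q\,\theta(u)/\theta_{u_Q}(u)$, with $c_Q$ chosen later. The definition of $\theta_c$ in the text gives $\theta(u)=-u\,\theta(qu)$ and $\theta_{u_Q}(u)=-u_Q^{-1}u\,\theta_{u_Q}(qu)$. Hence $f_Q(u)=u_Q f_Q(qu)$, so $f_Q$ is a (meromorphic) theta function of type $u_Q$, i.e.\ $cu^0$ with $c=u_Q$. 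This gives property~1. Its zeros are exactly $q^\ZZ$ and its poles exactly $u_Qq^\ZZ$, each simple, since $\theta$ has simple zeros on $q^\ZZ$ by the product formula. On $E_q$ the divisor is therefore $(1)-(Q)$, written multiplicatively as $(1)/(Q)$, which is property~2.

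For the normalization in property~3, note that near $u=1$ we have $\theta(u)=(1-u)\eta(q)^2(1+O(1-u))$. Also $\theta_{u_Q}(1)=\theta(u_Q^{-1})\neq0$ when $Q\neq1$. So we set $c_Q:=\theta(u_Q^{-1})/\eta(q)^2$, which gives $f_Q(u)\sim 1-u$. (If $Q=1$ we take $f_Q\equiv 1-u$ trivially, or exclude this case.)

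The remaining point, and the main obstacle, is the dependence on the lift $u_Q$ versus $u_Qq^m$. By the $q$-periodicity relation $\theta(q^{-1}v)=-q^{-1}v\,\theta(v)$ applied to $v=u/u_Q$, replacing $u_Q$ by $u_Qq^m$ multiplies $\theta_{u_Q}(u)$ by a factor of the form $\pm q^{a}(u/u_Q)^{m}$. This factor depends on $u$, so the quotient is not literally a constant multiple. I would resolve this in one of two ways. The first is to work with the theta function of type $cu^r$ in the class modulo $q^\ZZ$-reparametrization, i.e.\ to compare the $l$-th powers $f_{Q'}^l/f_Q^l$ and use $u_Q^l\in q^\ZZ$, $\zeta^l=1$ to show the ratio is $q$-periodic holomorphic without zeros, hence constant by $H_K\cap M_{K,q}=K$. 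The second is to replace $\theta$ by the IUT theta $\tilde\theta$ and use formulas~\ref{qperiodic} and \ref{thetarelation}, where the shift by $q^{1/2}$-powers produces only the monomial and sign factors, which cancel after renormalizing at $u=1$.

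After normalization at $u=1$ the ratio $f_{Q'}/f_Q$ then tends to $1$ up to an $l$-th root of unity coming from $\zeta$, giving $\mu$. I expect making this step rigorous, namely pinning down that the residual factor is a constant $l$-th root of unity rather than a $u$-dependent monomial, to be the delicate part. I would try the $l$-th power comparison first.
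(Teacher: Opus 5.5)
Your $f_Q=c_Q\,\theta(u)/\theta_{u_Q}(u)$ with $c_Q=\theta(u_Q^{-1})/\eta(q)^2$ does give properties 1 and 2 and the normalization $f_Q\sim 1-u$ for $Q\neq O$. The case $Q=O$ must be excluded rather than handled by $1-u$: a trivial divisor is incompatible with vanishing at $u=1$, and $1-u$ is not a theta function.

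The gap is the last clause of property 3, and neither of your proposed repairs closes it. If you carry your own computation through the normalization, building $f_{Qq^m}$ from the lift $u_Qq^m$ gives exactly $f_{Qq^m}=u^{-m}f_Q$. Hence $(f_{Qq^m}/f_Q)^l=u^{-lm}$, which satisfies $g(u)=q^{lm}g(qu)$. This is a theta function of type $q^{lm}$, not a $q$-periodic function. So the step ``the ratio of $l$-th powers is $q$-periodic without zeros, hence constant'' is false, and the facts $u_Q^l\in q^{\ZZ}$, $\zeta^l=1$ cannot rescue it. No root of unity `coming from $\zeta$' appears either, since $\zeta$ is unchanged under $u_Q\mapsto u_Qq^m$. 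Passing to $\tilde\theta$ does not help. Two theta functions with divisor $(1)/(Q)$ differ by an element of $\Omega^0_q$ (Proposition~\ref{divexactseq}), i.e.\ by some $au^n$. Normalizing at $u=1$ forces $a=1$ but never $n=0$.

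The paper's proof does use the $l$-th-power comparison, but only after first normalizing $f_Q$ so that $f_Q^l$ is itself $q$-periodic. Then $(f_{Q'}/f_Q)^l$ is a normalized $q$-periodic unit, hence equal to $1$. Your argument lacks that normalization, and it cannot simply be added. Write $u_Q=\zeta q^{k/l}$. Every theta function with divisor $(1)/(Q)$ has the form $au^nf_Q$, and its $l$-th power has type $q^{k-ln}$. So a $q$-periodic $l$-th power exists only if $l\mid k$, i.e.\ only for $Q$ in the image of $\mu_l$. For the same reason, the paper's own rescaling step only goes through in that case.

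What does close the gap for every $Q\neq O$ is noticing that properties 1 and 2 and the normalization depend only on the class of $Q$ in $K^\times/q^{\ZZ}$. Fix one representative per class, e.g.\ $\zeta q^{k/l}$ with $0\le k<l$ once $q^{1/l}\in K$ is fixed. Define $f_Q$ from that representative and set $f_{Qq^m}:=f_Q$; then property 3 holds with $\mu=1$. Since any two normalized choices differ by a factor $u^n$, lift-independence has to be built into the choice; it cannot be derived from a formula in an arbitrary lift, as your computation shows.
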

\begin{proof}
By proposition~\ref{divexactseq}, the function $f_Q$ satisfying the first two conditions exists and is unique up to a multiple in $\Omega_q^0$. Hence  $f_Q^l$ is a theta function with divisor $(1)^l/(Q)^l$ and $1^l/Q^l=1$. By proposition~\ref{divexactseq} and Proposition~\ref{periodexactseq}, we see that $f_Q^l$ is a $q$-periodic meromorphic function up to a multiple in $\Omega_q^0$. Multiplying $f_Q$ by a constant, we can assume that $f_Q^l$ is a $q$-periodic meromorphic function with $f_Q(u)\to(1-u)$ for $u\to1$. Then $(f_{Q'}/ f_{Q})^l$ is a normalized $q$-periodic meromorphic function with no zeros or poles, so by Proposition~\ref{periodexactseq} we have $(f_{Q'}/ f_{Q})^l=1$.
\end{proof}

Next we show that the $l$-th root of the theta function yields $l$-th torsion points. Define a change of variable by $u=v^l$ and $\zeta(v):= \theta(u)^{\frac{1}{l}}$. Since the theta function satisfies 
$ \theta( u)^{\frac{1}{l}}=-u^{\frac{1}{l}}\theta(qu)^{\frac{1}{l}}$, we have $\zeta(v)=-v\zeta( q^{\frac{1}{l}}v)$. Hence by formula~\ref{dimoftheta}, $\zeta(v)=\theta(q^{\frac{1}{l}}, v)$ which implies that $\zeta$ is a theta function with the unique zero at $v=q^{\frac{1}{l}\ZZ}$. Note that $v^l\in q^{\ZZ}$, thus the reciprocal of $l$-th root of theta function $\zeta^{-1}$ is essentially the product of fractions $f_Q/\theta$ for some $f_Q$ given in Proposition~\ref{lthroottheta}, up to a multiplication by an $l$-th root of unity.    

Using relation~\ref{thetarelation}, we see that $\tilde\theta$ strictly corresponds to $\theta$, and 
their zeros and poles counted with multiplicities are exactly the same through this correspondence. 
Therefore, $\tilde\theta$ only vanishes at  $\tilde u^2=u=q^\ZZ$, and then the $l$-th root of $\tilde\theta$ vanishes
at $v=q^{\frac{j}{2l}}$ for $0\leq j<l^*= (l-1)/2$, where $j$ takes these integers due to $\tilde u\in q^{\frac{1}{2}\ZZ}$. 

In IUT~\cite[II]{mochizuki12}, the key construction that gives rise to the inequality of diophantine results is a reciprocal of the $l$-th root of the theta function given by
\begin{equation}
\Theta(u):= \left\{\left( \sqrt{-1}\sum_{n=-\infty}^{\infty}q^{\frac{1}{2}(n+\frac{1}{2})^2} \right)^{-1}\left( \sum_{n=-\infty}^{\infty}(-1)^n  q^{\frac{1}{2}(n+\frac{1}{2})^2} u^{n+\frac{1}{2}}\right)\right\}^{-\frac{1}{l}},
\end{equation}
where the term within parentheses can be seen as a normalization of $\tilde\theta$ at $-1$.
By the above discussion, $\Theta$ can be seen as a non-vanishing meromorphic function that only has poles at $q^{\frac{1}{2l}\ZZ}$. Then formula~\ref{qperiodic} of "$q$-periodicity" gives us the theta value, up to a factor of a $2l$-th root of unity, at $l$-torsion points $q^{\frac{a}{2l}}$ for $j\in\ZZ$,
\begin{equation}\label{Thetalink}
\Theta(\tilde q^{j})=\Theta(q^{\frac{j}{2l}})=(-1)^j q^{\frac{j^2}{2l}} (-1)^j \Theta(-1)=q^{\frac{j^2}{2l}}=\tilde q^{j^2}.    
\end{equation}
Here we use the fact that the normalized $\tilde\theta$ is $1$ at $-1$. Also note that $\Theta$ is well-defined on a Tate curve since $(j\pm2l)^2\equiv j^2$ mod $2l$. 
In addition, formula~\ref{qperiodic} gives us a 
modulo $\operatorname{log}(\mathcal O_K^\times)$-functional equation~\ref{functionaleq} with respect to $p$-adic logarithm as indicated in~\cite[Proposition~1.5]{mochizuki09}.
\begin{equation}\label{functionaleq}
\operatorname{log}(\Theta(q^{\frac{j}{2}}u))+\operatorname{log}(\mathcal O_K^\times) =\operatorname{log}(\Theta(u))+\frac{j}{l}\log(u)+\frac{j^2}{2l}\operatorname{log}(q)+\operatorname{log}(\mathcal O_K^\times).    
\end{equation}

\subsection{Hodge Theaters}
Now we have the information for both sides of the fibration~\ref{schemeexactseq3*}, it is natural to consider the interaction of the Frobenius picture $\operatorname{Diff}^{\operatorname{an}}(R_\mathfrak p)$  and the \'etale picture $\operatorname{Aut}(\widetilde{\mathcal E_{\mathfrak a}}[l])$, i.e., the interplay of $l$-torsion points of theta functions $\Theta$ in $K_{\mathfrak p}$-coefficient Tate curves and $p$-adic logarithmic functions. This two-parameter construction reminds us of a Hodge structure in many aspects, and this is why such a structure is called a $\Theta^{\pm \operatorname{ell}}$NF-Hodge theater in IUT~\cite[III Definition~3.8]{mochizuki12}.

The fundamental idea of constructing a Hodge structure out of fibration~\ref{schemeexactseq5} arises from the decomposition of  
$R^\times_{\mathfrak p}$ into two distinct parts
$R^{\times\mu}_{\mathfrak p}$ and $R^{\mu}_{\mathfrak p}$; see Example~\ref{logvolume}.
The residue part $R^{\times\mu}_{\mathfrak p}$ is contained in $1+\mathfrak p$, so we can apply $p$-adic logarithm to obtain a new element in $R^\times_{\mathfrak p}$, which gives the full log-links by iteration of this process.
\begin{equation}\label{loglink}
\cdots\stackrel{\operatorname{log}}{\longrightarrow}\mathcal{HT}_{n,m-1}^{\Theta^{\pm \operatorname{ell}}\operatorname{NF}}\stackrel{\operatorname{log}}{\longrightarrow}\mathcal{HT}_{n,m}^{\Theta^{\pm \operatorname{ell}}\operatorname{NF}}\stackrel{\operatorname{log}}{\longrightarrow}\mathcal{HT}_{n,m+1}^{\Theta^{\pm \operatorname{ell}}\operatorname{NF}}\stackrel{\operatorname{log}}{\longrightarrow}\cdots     
\end{equation}
On the other hand, the torsion part $R^{\mu}_{\mathfrak p}$ 
is related to the theta function $\Theta$ valued in the group $\mu_{2l}$ of $2l$-th roots of unity.
The explicit form of the group action on the torsion can be given in the same way as in formula~\ref{Thetalink}, which iteratively gives the full $\Theta$-links.
\begin{equation}\label{thetalink}
\cdots\stackrel{\Theta}{\longrightarrow}\mathcal{HT}_{n-1,m}^{\Theta^{\pm \operatorname{ell}}\operatorname{NF}}\stackrel{\Theta}{\longrightarrow}\mathcal{HT}_{n,m}^{\Theta^{\pm \operatorname{ell}}\operatorname{NF}}\stackrel{\Theta}{\longrightarrow}\mathcal{HT}_{n+1,m}^{\Theta^{\pm \operatorname{ell}}\operatorname{NF}}\stackrel{\Theta}{\longrightarrow}\cdots     
\end{equation}
If $\mathcal{HT}_{n,m}^{\Theta^{\pm \operatorname{ell}}\operatorname{NF}}$ is viewed as an $(n,m)$-th element of an infinite dimensional order matrix, then the log-links (resp. $\Theta$-links) are vertical arrows (resp. horizontal arrows) of columns (resp. rows) in the matrix. This yields a log-theta-lattice, which is often referred to as "Gaussian" in IUT,  since theta function $\Theta$ acts as a Gaussian distribution function on the $l$-torsion points.

\subsection{A remark on Diophantine results}
In the preceding subsections, we learn how to construct the IUT from a view point of automorphisms on group schemes. This seems to be a much simpler way to introduce the object, and it arises naturally as a $p$-adic counterpart of automorphisms on principal bundles of differential geometry. Now in the last part of present section, we give a brief remark on Mochizuki's idea of proving various Diophantine results including the celebrated $abc$-conjecture.

As mentioned several times in IUT, the core idea in proving the Diophantine results is a coordinate change from the Cartesian $(x,y)$ to the Polar $(r,\theta)$ in integration of Gaussian distribution, which is simply expressed as 
\begin{align*}
\left(\int_{-\infty}^{+\infty} e^{-x^2} dx \right)^2=\int_{-\infty}^{+\infty}\int_{-\infty}^{+\infty} e^{-(x^2+y^2)}dxdy=\int_{0}^{2\pi}\int_{0}^{+\infty} e^{-r^2}rdrd\theta=\pi.  
\end{align*}
If this idea is applied to a group scheme over a number field with fibre being the $l$-th torsion points of an elliptic curve $E$, then the coordinate change will be the one from the local Tate parameter $(q_v,u_v)$ to the local polar parameter 
$(q_v^{\times\mu},j)$, where $\tilde q_v^j$ is the rotation of the $l$-th torsion and $q_v^{\times\mu}$ is the quotient within $R_{v}^{\times\mu}$. 

So the remaining task is to calculate the so-called "volume" of logarithmic theta function $\operatorname{log}(\Theta)$ in these two coordinates. The calculation carried out in the polar parameter can be simplified to equation~1.5 in Scholze et. al.~\cite{scholze2018}. On the other hand, the calculation in the Tate parameter is a comparison on degrees of Arakelov divisors of $\tilde q$ and $\Theta$, and the following main claim is asserted to hold
\begin{equation}\label{pilotestimate}
-|\operatorname{log}(\tilde q)|\leq-|\operatorname{log}(\Theta)|.    
\end{equation}
However, as pointed out by subsection~2.2 in~\cite{scholze2018}, there is a substantial gap between concrete $q$-pilot ($\Theta$-pilot) objects and abstract pilot objects. It seems impossible to obtain formula~\ref{pilotestimate} directly from concrete $q$-pilot ($\Theta$-pilot) objects, which we illustrate in the following example. 
\begin{expl}[heuristic]
Let $E_q$ be an elliptic curve with Tate parameter $q$. The term $-|\operatorname{log}(\tilde q)|$ is essentially the degree of following Arakelov divisor
\begin{align*}
-\tilde q_E=-\sum_{v} v(\tilde q_v)\cdot v=-\frac{1}{2l}q_E,   
\end{align*}
where $v$ goes through all the valuations for which $E_q$ has a bad reduction. Similarly, $-|\operatorname{log}(\Theta)|$ is the degree of following Arakelov divisor
\begin{align*}
-\Theta_E(\tilde q_v^j)=-\sum_{v} v(\Theta_v(\tilde q_v^j))\cdot v=-\frac{j^2}{2l}q_E,   
\end{align*}
where $j\in\{0,1,\cdots, l^*\}$ and $l^*=(l-1)/2$. One immediately arising problem is that the degree of divisor $\Theta_E(\tilde q_v^j)$ should be in $\mathbb Q/\ZZ$, since $q_v^\ZZ$ is the unit element in Tate curve. The remaining quantitative inequality is even harder to be true. 
\end{expl}

\bibliographystyle{abbrv}

\begin{thebibliography}{10}

\bibitem{abbati1989}
M.~C.~Abbati, R.~Cirelli, A.~Mania, and P.~Michor.
\newblock The Lie group of
automorphisms of a principal bundle.
\newblock {\em JGP}, 6(2):215-235, 1989.

\bibitem{Albeverio2010}
S.~Albeverio, A.~Y.~Khrennikov, and V.~M.~Shelkovich.
\newblock {\em Theory of $p$-Adic Distributions:
Linear and Non-Linear Models}. Volume 370 of London Mathematical Society Lecture
Note Series. 
\newblock Cambridge University Press, 2010.

\bibitem{Amari2000}
S.~Amari and H.~Nagaoka.
\newblock {\em Methods of Information Geometry}. Translations of Mathematical Monographs, volume 191.
\newblock Oxford University Press, 2000.

\bibitem{arnold1966geometrie}
V.~I.~Arnold.
\newblock Sur la g{\'e}om{\'e}trie diff{\'e}rentielle des groupes de {Lie} de
  dimension infinie et ses applications {\`a} l'hydrodynamique des fluides
  parfaits.
\newblock {\em Annales de l'institut Fourier}, 16(1):319-361, 1966.

\bibitem{Arnold98}
V.~I.~Arnold, B.~Khesin. \newblock {\em Topological Methods in Hydrodynamics}. Applied Mathematical Sciences, volume 125.  Springer-Verlag, 1998.

\bibitem{bauer2021paper}
M.~Bauer, Y.~Lu and C.~Maor.
\newblock A Geometric View on the Generalized Proudman-Johnson and $r$-Hunter-Saxton Equations.
\newblock {\em Journal of Nonlinear Science}, 17, 2022.


\bibitem{bauer2023}
M.~Bauer, A.~Le Brigant, Y.~Lu and C.~Maor.
\newblock The $L^p$-Fisher-Rao metric and Amari-\u{C}encov $\alpha$-connections.
\newblock {\em Calculus of Variations and Partial Differential Equations}, 63(2):56, 2024.




\bibitem{bradley2025}
P.~E.~Bradley.
\newblock Diffusion operators on $p$-adic analytic manifolds.
\newblock {\em 			ArXiv:2510.22563},  2025.

\bibitem{brown2003}
A.~F.~Brown.
\newblock  $p$-adic Theta Functions and Tate Curves.
\newblock {\em Elliptic Curves, Modular Forms and Cryptography, Hindustan Book Agency}, 151-165, 2003.

\bibitem{gunning1967}
R.~C.~Gunning.
\newblock Special coordinate coverings of Riemann surfaces.
\newblock {\em Math. Ann.}, 170:67-86, 1967.

\bibitem{hirzebruch1992}
F.~Hirzebruch, T.~Berger and R.~Jung.
\newblock{\em Manifolds and Modular Forms}. \newblock  Springer Fachmedien Wiesbaden, 1992.

%\bibitem{harris1982}J.~Harris.\newblock Theta-characteristics on algebraic curves.\newblock {\em  Trans. Amer. Math. Soc.}, 271(2): 611–638, 1982.

\bibitem{hubbard2006}
J.~H.~Hubbard.
\newblock{\em Teichm\"{u}ller and Applications to Geometry, Topology, and Dynamics, Volume 1: Teichm\"{u}ller Theory}. \newblock  Matrix Editions, 2006.

\bibitem{igusa2002}
J.~Igusa.
\newblock {\em An Introduction to the Theory of Local Zeta Functions}.
\newblock volume 14 of
AMS/IP studies in advanced mathematics, American Mathematical Society, International Press, 2002. 

%%\bibitem{dowty2018}J. G.~Dowty.\newblock Chentsov's theorem for exponentical families.\newblock {\em Information geometry}, 1:117-135, 2018. 

\bibitem{katz1982}
N.~M.~Katz.
\newblock A conjecture in the arithmetic theory of
differential equations.
\newblock {\em Bulletin de la S. M. F.}, 110:203-239, 1982.


\bibitem{khesin2011}
B.~Khesin, J.~Lenells, G.~Misiolek and S.~C.~Preston.
\newblock Geometry of diffeomorphism groups,
complete integrability and optimal transport.
\newblock {\em Geometric and Functional Analysis}, 23:334-366, 2013.

\bibitem{koblitz1977}
N.~Koblitz.
\newblock {\em p-adic Numbers, p-adic Analysis, and Zeta Functions.}
\newblock  {Graduate
Texts in Mathematics 58}, Springer-Verlag, 1977.


\bibitem{Lu2023}
Y.~Lu.
\newblock The $L^p$-Fisher-Rao metric and  information geometry.
\newblock FSU PhD thesis, 2023.

\bibitem{Lu2025}
Y.~Lu.
\newblock The $L^p$-geometry  and its applications.
\newblock {\em 		ArXiv:2512.13051,}  2025.
%\bibitem{mccann1997}R.~J.~McCann.\newblock A convexity principle for interacting gases.\newblock {\em Advances in Mathematics}, 128(1):153-179, 1997.

\bibitem{mochizuki96}
S.~Mochizuki.
\newblock A Theory of Ordinary p-adic Curves.
\newblock {\em Publ. Res. Inst. Math. Sci.}, 32: 957-1151, 1996.

\bibitem{mochizuki99}
S.~Mochizuki.
\newblock {\em Foundations of p-adic Teichm\"{u}ller Theory.}
\newblock  {AMS/IP Studies
in Advanced Mathematics 11.}, American Mathematical Society, 1999.

\bibitem{mochizuki02}
S.~Mochizuki.
\newblock An Introduction to p-adic Teichm\"{u}ller Theory.
\newblock {\em Cohomologies
p-adiques et applications arithm\'etiques I}, 278: 1-49, 2002.

\bibitem{mochizuki09}
S.~Mochizuki.
\newblock The \'Etale Theta function and its Frobenioid-theoretic Manifestations.
\newblock {\em Publ. Res. Inst. Math. Sci.}, 45: 227-349, 2009.

\bibitem{mochizuki12}
S.~Mochizuki.
\newblock Inter-universal Teichm\"{u}ller Theory I-IV.
\newblock {\em RIMS Preprint 1758.}, 2012.

\bibitem{mochizuki15}
S.~Mochizuki.
\newblock Topics in Absolute Anabelian Geometry III: Global Reconstruction Algorithms.
\newblock {\em J. Math. Sci. Univ. Tokyo}, 22: 939-1156, 2015.

\bibitem{moser1965}
J.~Moser.
\newblock On the volume elements on a manifold.
\newblock {\em Trans. Amer. Math. Soc.}, 120(2): 286-294, 1965.

%\bibitem{mumford1971}D.~Mumford.\newblock Theta characteristics of an algebraic curve.\newblock {\em  Ann. Sci. Ecole Norm. Sup.}, 4(2): 181-192, 1971.

%\bibitem{mumford1972}D.~Mumford.\newblock An analytic construction of degenerating curves over complete local rings.\newblock {\em Composition Math}, 24: 129-174, 1972.


\bibitem{neeb2011}
K.~H.~Neeb.
\newblock Lie Groups of Bundle Automorphisms and Their Extensions.
\newblock {\em Developments and Trends in Infinite-Dimensional Lie Theory. Progress in Mathematics, vol 288.} 
Birkhäuser, 2011.



\bibitem{schneider2011}
P.~Schneider.
\newblock $p$-adic Lie groups. 
\newblock {\em Grundlehren der mathematischen Wissenschaften
344}, Springer, 2011.

\bibitem{scholze2018}
P.~Scholze and J.~Stix.
\newblock Why $abc$ is still a conjecture.
2018.

\bibitem{serre1965}
J.~P.~Serre.
\newblock Classifcation des vari\'eti\'es analytiques p-adiques compactes.
\newblock {\em Topology}, 3:409–412, 1965.

\bibitem{serre1992}
J.~P.~Serre.
\newblock Lie Algebras and Lie Groups.
\newblock {\em Lectures given at Harvard University.
Lecture Notes in Mathematics 1500}, Springer, 1992.


\bibitem{silverman1986}
J.~H.~Silverman.
\newblock 
{\em The Arithmetic of Elliptic Curves}. \newblock  Graduate Texts in Mathematics
106. Springer, 1986.

\bibitem{silverman1994}
J.~H.~Silverman.
\newblock 
{\em Advanced Topics in the Arithmetic of Elliptic Curves}. \newblock  Graduate Texts in Mathematics
151. Springer, 1994.

\bibitem{weil1982}
A.~Weil.
\newblock 
{\em Ad\`eles and Algebraic Groups}. \newblock  Progress in Mathematics 23. Birkh\"{a}user,
 1982.

\end{thebibliography}

\end{document}